\newtheorem{theorem}{Theorem}[section]
\newtheorem{lemma}[theorem]{Lemma}
\newtheorem{proposition}[theorem]{Proposition}
\theoremstyle{definition}
\newtheorem{definition}[theorem]{Definition}
\theoremstyle{remark}
\newtheorem{remark}[theorem]{Remark}
\def\R{\mathbb{R}}
\def\N{\mathbb{N}}
\def\Gr{\mathrm{Gr}}
\def\d{\mathrm{\,d}}
\def\lma{\lambda}
\def\part{\partial}
\begin{document}
\title{\textbf{{Stationary solutions and connecting orbits for  $p$-Laplace equation}}}
\author{Aleksander \'{C}wiszewski, Mateusz Maciejewski\\
\emph{Faculty of Mathematics and Computer Science} \\
\emph{Nicolaus Copernicus University}\\
\emph{ul. Chopina 12/18, 87-100 Toru\'{n}, Poland}}

\maketitle

\begin{abstract}
We deal with one dimensional $p$-Laplace equation of the form
$$
u_t = (|u_x|^{p-2} u_x )_x + f(x,u), \ x\in (0,l), \ t>0,
$$
under Dirichlet boundary condition, where $p>2$ and $f\colon [0,l]\times \mathbb{R}\to \mathbb{R}$ is a continuous function with $f(x,0)=0$. We will prove that if there is at least one eigenvalue of the $p$-Laplace operator between $\lim_{u\to 0} f(x,u)/|u|^{p-2}u$ and $\lim_{|u|\to +\infty} f(x,u)/|u|^{p-2}u$, then there exists a nontrivial stationary solution. Moreover we show the existence of a connecting orbit between stationary solutions. The results are obtained by use of Conley type homotopy index and continuation
along $p$ techniques. \end{abstract}

\section{Introduction}
We shall study a nonlinear $p$-Laplace equation
\begin{equation} \label{07042015-1828}
\left\{ \begin{array}{l}
u_t (t,x) = \left(\left| u_x (t,x)\right|^{p-2}u_x (t,x)\right)_x + f(x,u(t,x)), \ x\in (0,l), \ t\in\R,\\
u(t,0)=u(t,l)=0,\ t\in\R,
\end{array}
\right.
\end{equation}
with $p>2$, $l>0$ a continuous $f\colon [0,l]\times \R\to \R$, which is locally Lipschitz with respect to the second variable, i.e.
\begin{eqnarray}\label{0128-29042015}
\mbox{ for any $R>0$ there is $L>0$ such that } |f(x,u)-f(x,v)|\leq L|u-v|\\
\mbox{ for all } x\in [0,l],\, u,v \in [-R,R]. \nonumber \nonumber
\end{eqnarray}
The stationary version of (1), i.e. the elliptic problem
\begin{equation}\label{1348-21082015}
\left\{
\begin{array}{l}
-(|u'(x)|^{p-2}u'(x))'=f(x,u(x)), \ x\in (0,l),\\
\ \ u(0)=u(l)=0,
\end{array}
\right.
\end{equation}
is subject of extensive studies by many authors -- see earlier papers \cite{Manasevich}, \cite{Manasevich-et-al-1}, \cite{Drabek}, \cite{Man-Njo-Za} as well as more recent examples \cite{Mon-Mon-Papa}, \cite{cw-kr}, \cite{cw-mac}, \cite{Precup},
\cite{Lan-Yang-Yang} or \cite{Maciejewski2015}. Usually topological degree/index techniques or variational approach are applied.
Here we use a dynamical system approach based on the Conley type index from \cite{rybakowski}  and \cite{rybakowski-TAMS}
and Rybakowski's techniques from \cite{Rybakowski-JDE1984}. To compute the Conley indices, inspired by \cite{Manasevich}, we use deformation along $p$. We shall prove the following existence criterion.
\begin{theorem}\label{0914-29032015}
Suppose that $f\colon [0,l]\times\R\to\R$ is locally Lipschitz with respect to the second variable, $f(x,0)=0$ for $x\in (0,l)$ and
\begin{equation}\label{1219-21082015}
\lim_{u\to 0} \frac{f(x,u)}{|u|^{p-2}u}=f'_0 (x)
\end{equation}
and
\begin{equation}\label{1220-21082015}
\lim_{|u|\to +\infty} \frac{f(x,u)}{|u|^{p-2}u}=f'_\infty (x)
\end{equation}
for some $f'_0, f'_\infty\in C([0,l])$ uniformly with respect to $x\in [0,l]$. Suppose there are $k_0, k_\infty \in  \N$ such that $\lma_{k_0}^{(p)} \leq f'_0(x) \leq \lma_{k_0+1}^{(p)}$ and $\lma_{k_\infty}^{(p)} \leq f'_\infty (x) \leq \lma_{k_\infty+1}^{(p)}$, for all $x\in (0,l)$, with the strict inequalities on set of positive measure.
If $k_0\neq k_\infty$, then there exists a nontrivial  solution $\bar u\in C^{1}([0,l])$ of {\em (\ref{1348-21082015})}.\\
\indent Moreover, there exists a connecting orbit between $\bar u$ and $0$, i.e. a full solution $u$ of {\em (\ref{07042015-1828})} such that  either $u(t_n,\cdot) \to \bar u$ for some  $t_n\to +\infty$ and $u(t,\cdot) \to 0$ as $t\to -\infty$
or $u(t_n,\cdot) \to \bar u$ for some $t_n \to -\infty$ and $u(t,\cdot) \to 0$ as $t\to +\infty$ {\em (}with respect to the max norm of the space $C(0,l)${\em)}.
\end{theorem}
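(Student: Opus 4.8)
The plan is to realize the stationary problem (\ref{1348-21082015}) as the set of equilibria of the semiflow generated by (\ref{07042015-1828}) and to detect both a nontrivial equilibrium and a connecting orbit by comparing Conley--Rybakowski homotopy indices computed near $0$ and ``at infinity''. First I would fix a phase space in which (\ref{07042015-1828}) generates a local semiflow $\Phi=\Phi_p$; since the conclusion is stated in the max norm, the natural choice is $X=\{u\in C([0,l]):u(0)=u(l)=0\}$, on which the Dirichlet $p$-Laplacian generates a nonlinear semigroup whose smoothing and compactness properties make $\Phi$ admissible in the sense of \cite{rybakowski}, so that the homotopy index theory applies. Using the energy $E(u)=\frac1p\int_0^l|u_x|^p\,\d x-\int_0^l F(x,u)\,\d x$ with $F(x,u)=\int_0^u f(x,s)\,\d s$, one checks $\frac{\d}{\d t}E(u(t))=-\int_0^l|u_t|^2\,\d x\le 0$, so $\Phi$ is gradient-like: every bounded full orbit connects two equilibria, and bounded invariant sets consist of equilibria and their connecting orbits.

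Next I would establish a priori bounds, using the gap condition $f'_\infty(x)\le\lma_{k_\infty+1}^{(p)}$ together with the Lyapunov structure, so that all bounded invariant sets lie in a fixed large ball $N$; the maximal invariant set $S\subset N$ is then isolated, and $\{0\}$ is an isolated invariant set. The heart of the argument is the evaluation of the two indices $\Ind(\{0\})$ and $\Ind(S)$. Near $0$ equation (\ref{07042015-1828}) is governed by the $p$-homogeneous problem with potential $f'_0(x)|u|^{p-2}u$, and at infinity by $f'_\infty(x)|u|^{p-2}u$; in each case the strict inequalities on a set of positive measure guarantee, through a Sturm-type comparison for the eigenvalues $\lma_k^{(p)}$, that the corresponding homogeneous problem has no nontrivial equilibrium, so isolation survives the homotopy of $f$ to its asymptotic form. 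Then, following \cite{Manasevich}, I would deform along a parameter $q\in[2,p]$: continuation invariance of the homotopy index reduces the computation to the linear case $q=2$, where both indices are read off from hyperbolic equilibria, namely the origin for $u_t=u_{xx}+f'_0(x)u$ of Morse index $k_0$ and the origin for $u_t=u_{xx}+f'_\infty(x)u$ of Morse index $k_\infty$ (the Morse indices being pinned down by Sturm comparison and the position of $f'_0,f'_\infty$ between consecutive Dirichlet eigenvalues $\lma_k^{(2)}$). Consequently $\Ind(\{0\})=\Sigma^{k_0}$ and $\Ind(S)=\Sigma^{k_\infty}$, the pointed homotopy types of the $k_0$- and $k_\infty$-spheres.

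Since $k_0\neq k_\infty$ these indices differ, so $S\neq\{0\}$. I would obtain the nontrivial solution and the connecting orbit simultaneously, by contradiction: if no full orbit connected $0$ to a nontrivial equilibrium, then, by the gradient structure, $\{0\}$ would be an isolated invariant set disjoint from $S_1:=S\setminus\{0\}$ and $S$ would split as the disjoint union $S=\{0\}\sqcup S_1$, whence $\Ind(S)=\Ind(\{0\})\vee\Ind(S_1)$ and, in reduced homology, $\tilde H_n(S)=\tilde H_n(\{0\})\oplus\tilde H_n(S_1)$ for all $n$; evaluating at $n=k_0$ (using $k_0\neq k_\infty$, so that $\tilde H_{k_0}(S)=0$) gives $0=\mathbb{Z}\oplus\tilde H_{k_0}(S_1)$, which is impossible. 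Hence a connecting orbit between $0$ and some nontrivial equilibrium $\bar u$ exists, and since the $\alpha$- and $\omega$-limits of bounded orbits in a gradient flow are equilibria, $\bar u$ solves (\ref{1348-21082015}) and is joined to $0$ exactly as stated (in one of the two directions, according to which of $0,\bar u$ is the attracting end).

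I expect the principal difficulty to be the two index computations, that is, making the deformation along $q$ rigorous: one must organize the problems for $q\in[2,p]$ as a single admissible family of semiflows on a common space with uniform isolation (no invariant set touching $\partial N$ along the homotopy), despite the non-differentiability of the $p$-Laplacian and the fact that the ``linearizations'' at $0$ and at $\infty$ are only positively homogeneous rather than linear. The continuation invariance of the homotopy index together with the explicit one-dimensional description of the spectrum $\{\lma_k^{(p)}\}$ are precisely what make this step tractable.
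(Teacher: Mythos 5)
Your proposal is correct and follows essentially the same route as the paper: the semiflow on $C_0(0,l)$ with the energy as a Lyapunov function, the two index computations $h(\Phi,\{0\})=\Sigma^{k_0}$ and $h(\Phi,K_\infty)=\Sigma^{k_\infty}$ obtained by first homotoping $f$ to its $p$-homogeneous asymptotic profiles (with a rescaling/blow-up argument for uniform isolation) and then continuing along $q\in[2,p]$ to the linear case --- where the paper, to keep the invariant set isolated for every intermediate $q$, first replaces $f'_0$ (resp.\ $f'_\infty$) by a constant $\lambda$ and moves it inside the varying gap $(\lambda_{k}^{(q)},\lambda_{k+1}^{(q)})$, a detail your sketch should adopt since $f'_0$ lies between consecutive $p$-eigenvalues, not $2$-eigenvalues. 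Your final disjoint-union/wedge-homology contradiction is exactly the content of the irreducibility theorem \cite[Ch.~I, Th.~11.5]{rybakowski} that the paper invokes; if you argue it by hand, the one point needing care is showing that the absence of a connecting orbit really makes $S\setminus\{0\}$ a compact isolated invariant set bounded away from $0$, which requires the admissibility-based limiting argument of that theorem rather than the gradient structure alone.
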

\noindent Here recall that $\lma\in\R$, for which the problem
$$\left\{
\begin{array}{l}
-(|u'(x)|^{p-2}u'(x))'=\lma |u(x)|^{p-2} u(x), \ x\in (0,l),\\
\ \ u(0)=u(l)=0,
\end{array}
\right.
$$
has nonzero solutions, make a sequence of positive numbers $\lma_n^{(p)}$, $n\geq 1$, such that $\lma_n^{(p)}\to +\infty$ and, for any $n\geq 1$, $\lma_n^{(q)}\to \lma_n^{(p)}$ whenever $q\to p$ (see \cite{Otani}). We also put $\lambda_0^{(p)}:=-\infty$.\\
\indent In this paper we consider a local semiflow (a sort of dynamical system) ${\mathbf \Phi^{(p,f)}}$ on the space ${\mathbf X}=C_0(0,l):=\{ u\in C(0,l)\mid u(0)=u(l)=0\}$ associated with the equation (\ref{07042015-1828}).  To find a stationary solution and related connecting trajectory we use the theory of irreducible sets due to Rybakowski \cite{rybakowski}, where we need to find Conley indices of the zero $K_0:=\{ 0\}$ and the set $K_\infty$ made by all full bounded trajectories of (\ref{07042015-1828}). The main difficulty lies in the fact that both the differential operator and continuous term are nonlinear.
In order to consider and compute Conley index (due to Rybakowski) for ${\mathbf \Phi}^{(p,f)}$ we need to study the existence, compactness and continuity properties of solutions. We shall also exploit the Lyapunov function for the problem and related regularity to find stationary solutions at the ends of full trajectories. What we gain by use of Conley index and what we could not obtain with topological degree techniques is that we show the existence under the condition $k_0\neq k_\infty$ while topological degree works in the case where $k_0$ and $k_\infty$ are of different parities. In addition, we have a full trajectory between two stationary solutions.\\
\indent The paper is organized as follows. In the rest of the section we give some notation and basic preliminaries on Conley index. Section 2 is devoted mainly to continuity and compactness issues for abstract evolution equations governed by perturbations of $m$-accretive operators and subdifferentials of convex functionals. In Section 3 we study the existence and regularity of solutions together with Lyapunov function theory. The continuity and compactness properties with respect to $p$ and $f$, which are crucial for computing Conley index via its continuation property, are explored in Section 4. Finally, we compute the Conley indices of $K_0$ and $K_\infty$ and prove Theorem \ref{0914-29032015} in Section 5.\\

\noindent {\bf Preliminaries}\\
Two pointed topological spaces $(X,x_0)$ and $(Y,y_0)$ are said to be {\em homotopically equivalent} or have {\em the same homotopy type}  if and only if there are maps $f\colon (X,x_0)\to (Y,y_0)$ and $g\colon (Y,y_0)\to (X,x_0)$ such that $f\circ g$ is homotopic via a mapping keeping $y_0$ fixed to the identity of $(Y,y_0)$
and $g\circ f$ is homotopic via a mapping keeping $x_0$ fixed to the identity of $(X,x_0)$. The homotopy class represented by a space $(X_0,x_0)$ is denoted by $[(X,x_0)]$. If $(X,A)$ is a pair of topological spaces with a nonempty and closed $A\subset X$, then $X/A$ denotes the quotient space, obtained by collapsing the subset $A$ to a point $[A]$.\\
\indent Consider $C_0(0,l):=\{ u\colon [0,l]\to \R \mid f \mbox{ is } continuous, \ u(0)=u(l)=0\}$ ($l>0$) with the usual norm $\|u\|_{\infty}:=\max_{[0,l]}|u|$. By $L^p(0,l)$ and $W^{k,p}(0,l)$ and $W_{0}^{1,p}(0,l)$ we denote the standard Lebesgue and Sobolev spaces on the interval $(0,l)$ and we put $H_0^{1}(0,l):=W_{0}^{1,2}(0,l)$. In the same way, by $L^p (0,T;X)$ and $W^{1,p}(0,T;X)$ we denote the spaces with values in a Banach space $X$.\\

\noindent {\bf Conley index due to Rybakowski}\\
Here we briefly present homotopy index theory from \cite{rybakowski} (see also \cite{rybakowski-TAMS}). Let $\Phi\colon D\to X$, where $D$ is an open subset of $[0,\infty)\times X$, be a local semiflow on a metric space $X$. Let $T_{\bar u}=\sup\{t>0\mid (t,\bar u)\in D\}$. A continuous function $u\colon J\to X$, where $J\subset \R$ is an interval, is called a {\em solution of $\Phi$} if and only if $u(t+s) = \Phi_t (u(s))$ for any $t\geq 0$ and $s\in J$ such that $t+s\in J$. If $u\colon [a,+\infty)\to X$, $a\in\R$,  is a solution of $\Phi$, then by the {\em $\omega$-limit of $u$} we mean the set
$$
\omega (u):=\left\{ \bar u\in X \mid \exists\, (t_n) \mbox{ in } [a,+\infty) \mbox{ such that } t_n\to +\infty \mbox{ and }\bar u
= \lim_{n\to\infty} u(t_n) \right\}.
$$
The {\em $\alpha$-limit} of a solution $u\colon (-\infty, a]\to X$ of $\Phi$, $a\in\R$, is defined by
$$
\alpha (u):=\left\{\bar u\in X \mid \exists\, (t_n) \mbox{ in } (-\infty,a] \mbox{ such that } t_n\to -\infty \mbox{ and } \bar u =\lim_{n\to\infty} u(t_n) \right\}.
$$
Let $N\subset X$. By the \emph{invariant part} $\mathrm{Inv}_\Phi (N)$ of $N$ we mean
$$
\mathrm{Inv}_\Phi (N) :=\{\bar u \in N \mid \exists \mbox{ a solution $u\colon \R\to N$ of $\Phi$ with }u(0)=\bar u \}.
$$
We shall say that $K\subset X$ is a {\em $\Phi$-invariant set} or {\em invariant with respect to $\Phi$} provided  $\mathrm{Inv}_\Phi (K)=K$.
A $\Phi$-invariant set $K\subset X$ is called an {\em isolated $\Phi$-invariant set} if and only if  there exists $N\subset X$ such that  $K=\mathrm{Inv}_\Phi (N) \subset \mathrm{int}\, N$. Such $N$ is called an {\em isolating neighborhood of $K$}.
The following concept of admissibility is crucial in Rybakowski's version of Conley theory on general metric spaces and enables us to construct a Conley type index without local compactness of $X$.
\begin{definition}\label{1403-29072015} $\mbox{ }$\\
(i) $N\subset X$ is said to be \emph{$\Phi$-admissible} if and only if, for any $(t_n)$ in $[0,+\infty)$ with $t_n\to +\infty$
and $(v_n)$ in $X$ such that $\Phi_{[0,t_n]} (v_n) \subset N$, the sequence $\left( \Phi_{t_n}(v_n) \right)$ contains a convergent subsequence.\\
(ii) $N$ is said to be \emph{strongly $\Phi$-admissible} if $N$ is $\Phi$-admissible and $\Phi$ does not \emph{explode} in $N$, i.e.  $\Phi_{[0,T_{\bar u})}(\bar u) \subset N$ implies $T_{ \bar u } = + \infty$.
\end{definition}
We also need the notion of admissibility for the families of local semiflows.
\begin{definition}\label{1403-29072015'} $N\subset X$ is said to be \emph{$\{\Phi^k\}_{k\in K}$-admissible} if and only if, for any $(t_n)$ in $[0,+\infty)$ with $t_n\to +\infty$, $(k_n)$ in $K$
and $(v_n)$ in $X$ such that $\Phi_{[0,t_n]}^{k_n} (v_n) \subset N$, the sequence $\left( \Phi_{t_n}^{k_n}(v_n) \right)$ contains a convergent subsequence. If additionally $\Phi^k$ does not explode in $N$ for any $k\in K$, then $N$ is \emph{strongly $\Phi$-admissible}.
\end{definition}

Let ${\cal I}(X)$ be the family of pairs $(\Phi, K)$ where $\Phi$ is a local semiflow on a metric space $X$ and $K$ is an isolated invariant set having a strongly $\Phi$-admissible isolating neighborhood of $K$. The Conley homotopy index $h(\Phi, K)$ of $K$ relative to $\Phi$ for $(\Phi, K)\in {\cal I}(X)$ is defined by
 $h(\Phi, K):=[(B/B^-, [B^-])]$ where $B$ is an isolating block of $K$ (relative to $\Phi$) with the exit set $B^- \neq \emptyset$ and if $B^-=\emptyset$ we put
$h(\Phi, K)=[(B\cup \{ a\}, a)]$ where $a$ is an element that does not belong to $B$. In particular,
$h(\Phi, \emptyset)=\overline 0$ where $\overline 0:=[(\{a\},a)]$.\\

The Conley index has the following properties\\
(H1) For any $(\Phi, K)\in {\cal I}(X)$, if $h(\Phi, K)\neq \overline{0}$, then $K\neq \emptyset$.\\
(H2) If $(\Phi, K_1), (\Phi, K_2)\in {\cal I}(X)$ and $K_1\cap K_2=\emptyset$, then $(\Phi, K_1\cup K_2)\in {\cal I}(X)$
and $h(\Phi, K_1\cup K_2) = h(\Phi, K_1)\vee h(\Phi, K_2)$.\\
(H3) For any $(\Phi_1, K_1)\in {\cal I}(X_1)$ and $(\Phi_2, K_2)\in {\cal I}(X_2)$, $(\Phi_1\times \Phi_2, K_1\times K_2)\in {\cal I} (X_1\times X_2)$
and $h(\Phi_1\times \Phi_2, K_1\times K_2) = h(\Phi_1, K_1) \wedge h(\Phi_2, K_2)$.\\
(H4) If the family of semiflows $\{\Phi^{(\mu)} \}_{\mu\in [0,1]}$ is continuous and there exists $V$ such that
$V$ is strongly $\{ \Phi^{(\mu)} \}_{\mu\in [0,1]}$-admissible and $K_\mu = \mathrm{Inv}_{\Phi^{(\mu)}} (V) \subset \mathrm{int}\ V$, $\mu\in [0,1]$, then
$$
h(\Phi^{(0)}, K_0) = h(\Phi^{(1)}, K_1).
$$
In the linear case we shall use the following formula for computation of Conley index.
\begin{theorem} \label{11052015-1744}{\em (See \cite[Ch. I, Th. 11.1]{rybakowski})}
Suppose that $X$ is a normed space such that $X=X_- \oplus X_+$ with $k:=\dim X_+ <+\infty$ and a $C_0$-semigroup $\{T(t)\colon X\to X \}_{t\geq 0}$ is such that
$T(t)(X_+)\subset X_+$ and $T(t)(X_-)\subset X_-$,
$\|T(t)x\|\leq Ke^{-\alpha t}\|x\|$ for $x\in X_-$, $t\geq 0$ and some $\alpha>0$ and $\{T(t)\}_{t\geq 0}$ can be extended to a $C_0$-group $\{ T(t)\}_{t\in\R}$ on $X_+$ such that
$\|T(t)x\|\leq Ke^{\beta t} \|x\|$ for all $x\in X_+$, $t\leq 0$ and some $\beta>0$.
Then $\Phi\colon [0,+\infty) \times X\to X$ given by $\Phi(t,x):=T(t)x$ is a semiflow on $X$, the set $\{ 0\}$ is the set of all bounded full solutions of $\Phi$, $(\Phi, \{ 0 \})\in {\cal I}(X)$ and $h(\Phi, \{ 0\})=\Sigma^k$ where $\Sigma^k=[(S^k, \overline s)]$, i.e. it is the homotopy type of a $k$-dimensional sphere with a point.
\end{theorem}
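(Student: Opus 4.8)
The plan is to establish the four assertions of the theorem in turn: that $\Phi$ is a semiflow, that $\{0\}$ is exactly the set of bounded full solutions, that $(\Phi,\{0\})\in{\cal I}(X)$, and finally that $h(\Phi,\{0\})=\Sigma^k$. The semiflow property is immediate from the $C_0$-semigroup axioms $T(0)=I$, $T(t+s)=T(t)T(s)$ together with the joint continuity of $(t,x)\mapsto T(t)x$. To identify the bounded full solutions, I would take any full solution $u\colon\R\to X$ and decompose $u(t)=u_-(t)+u_+(t)$ with $u_\pm(t)\in X_\pm$; invariance of $X_\pm$ gives $u_\pm(s+t)=T(t)u_\pm(s)$ for $t\ge0$. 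On $X_+$ the group structure yields $u_+(t)=T(t)u_+(0)$ for all $t$, and the estimate $\|T(-t)x\|\le Ke^{-\beta t}\|x\|$ forces $\|u_+(t)\|\ge K^{-1}e^{\beta t}\|u_+(0)\|\to+\infty$ unless $u_+(0)=0$; running the stable estimate backward, $\|u_-(0)\|=\|T(\tau)u_-(-\tau)\|\le Ke^{-\alpha\tau}\|u_-(-\tau)\|$ forces $\|u_-(-\tau)\|\ge K^{-1}e^{\alpha\tau}\|u_-(0)\|\to+\infty$ unless $u_-(0)=0$. Hence a bounded full solution is identically $0$.

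To obtain a clean isolating block I would first pass to adapted norms $\|x_-\|_*:=\sup_{t\ge0}e^{\alpha t}\|T(t)x_-\|$ on $X_-$ and $\|x_+\|_*:=\sup_{t\le0}e^{-\beta t}\|T(t)x_+\|$ on $X_+$. By the hypotheses these satisfy $\|\cdot\|\le\|\cdot\|_*\le K\|\cdot\|$, so they are equivalent to the original norms, and they render the flow strictly contracting on $X_-$ and strictly expanding on $X_+$: $\|T(s)x_-\|_*\le e^{-\alpha s}\|x_-\|_*$ and $\|T(s)x_+\|_*\ge e^{\beta s}\|x_+\|_*$ for $s\ge0$. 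Then the box $B:=\{x_-+x_+ : \|x_-\|_*\le a,\ \|x_+\|_*\le b\}$ is an isolating block whose exit set is $B^-=\{\|x_-\|_*\le a,\ \|x_+\|_*=b\}$: on the face $\|x_+\|_*=b$ the unstable coordinate strictly increases, so the flow leaves immediately and cannot re-enter, while on the face $\|x_-\|_*=a$ with $\|x_+\|_*<b$ the stable coordinate strictly decreases, so the flow points inward. Since any full solution remaining in $B$ is bounded, $\mathrm{Inv}_\Phi(B)=\{0\}\subset\mathrm{int}\,B$.

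The crucial and only genuinely nontrivial point is the strong $\Phi$-admissibility of $B$, since $X$ need not be locally compact; this is where I expect the main work to lie. No explosion is automatic because $T(t)$ is defined for all $t\ge0$. For admissibility, take $t_n\to+\infty$ and $v_n=v_n^-+v_n^+$ with $\Phi_{[0,t_n]}(v_n)\subset B$. The stable part obeys $\|T(t_n)v_n^-\|_*\le e^{-\alpha t_n}\|v_n^-\|_*\le e^{-\alpha t_n}a\to0$, hence converges to $0$; the unstable part $T(t_n)v_n^+$ lies in the ball $\{\|\cdot\|_*\le b\}$ of the finite-dimensional space $X_+$, which is compact, so it admits a convergent subsequence. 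Adding the two, $(\Phi_{t_n}(v_n))$ has a convergent subsequence. This finite-dimensional-unstable / exponentially-decaying-stable splitting is exactly what substitutes for local compactness, and it gives $(\Phi,\{0\})\in{\cal I}(X)$.

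Finally I would compute $h(\Phi,\{0\})=[(B/B^-,[B^-])]$. Writing $\bar D_-=\{\|x_-\|_*\le a\}$, $\bar D_+=\{\|x_+\|_*\le b\}$ and $S=\{x_+ : \|x_+\|_*=b\}$, we have $B=\bar D_-\times\bar D_+$ and $B^-=\bar D_-\times S$, where $\bar D_+$ is a topological $k$-ball and $S\cong S^{k-1}$ is its boundary sphere. Since $\bar D_-$ is contractible, collapsing $\bar D_-\times S$ inside $\bar D_-\times\bar D_+$ has the same pointed homotopy type as collapsing $S$ inside $\bar D_+$; because $\bar D_+/S\cong S^k$, this yields $B/B^-\simeq S^k$ with basepoint $[B^-]$, so $h(\Phi,\{0\})=\Sigma^k$. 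Equivalently, one may split $\Phi=\Phi_-\times\Phi_+$ and apply the product property (H3): the exponentially stable factor contributes $\Sigma^0$ and the completely unstable factor on $X_+\cong\R^k$ contributes $\Sigma^k$, whose smash is again $\Sigma^k$.
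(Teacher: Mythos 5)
Your argument is essentially correct, but note that the paper itself gives no proof of this statement: it is quoted verbatim from Rybakowski's monograph (Ch.~I, Th.~11.1), so there is nothing internal to compare against. What you have written is, in substance, the standard proof from that reference: the hyperbolic splitting $X=X_-\oplus X_+$, adapted (Lyapunov) norms making the stable part strictly contracting and the unstable part strictly expanding, the product box $B=\bar D_-\times\bar D_+$ as isolating block with exit set $\bar D_-\times\partial\bar D_+$, admissibility from exponential decay on $X_-$ plus compactness of bounded sets in the finite-dimensional $X_+$, and the quotient $B/B^-\simeq \bar D_+/\partial\bar D_+\cong S^k$. Three small points deserve a word if you want this to be airtight in Rybakowski's framework: (1) being an ``isolating block'' there is defined via a classification of all boundary points into strict egress, strict ingress and bounce-off points with $B^-$ closed, so you should say explicitly that every boundary point of your box falls into one of these classes (corner points $\|x_-\|_*=a$, $\|x_+\|_*=b$ are egress points and belong to $B^-$); (2) the case $k=0$ must be treated separately, since then $B^-=\emptyset$ and the index is defined as $[(B\cup\{a\},a)]=\Sigma^0$ rather than via a quotient; (3) passing from the homotopy equivalence of pairs $(\bar D_-\times\bar D_+,\bar D_-\times S)\to(\bar D_+,S)$ to a pointed homotopy equivalence of quotients uses that the subspaces are cofibrantly included, which holds for these convex/spherical sets but is worth stating. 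None of these is a gap in the idea; they are the routine verifications the cited theorem packages away.
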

We shall use the theory of irreducible sets due to Rybakowski \cite{rybakowski}.
Recall that an isolated invariant set $K$ (relative to a local semiflow $\Phi$) is called {\em reducible} if there exist isolated invariant sets
$K_1, K_2$ such that $K=K_1 \cup K_2$, $K_1\cap K_2=\emptyset$, $(\Phi, K_1), (\Phi, K_2)\in {\cal I}(X)$ and both
$h(\Phi, K_1)\neq \overline 0$ and $h(\Phi, K_2)\neq \overline 0$. We say that $K$ is {\em irreducible} if it is not reducible.
It is known that the set $K$ is irreducible if one of the following conditions is satisfied: $K$ is connected, $h(\Phi, K)=\overline{0}$ or
$h(\Phi, K)=\Sigma^k$ (see \cite[Ch. I, Th. 11.6]{rybakowski}). The  concept of irreducible set turns out to be useful due to the following
\begin{theorem}\label{15052015-0642} {\em (See \cite[Ch. I, Th. 11.5]{rybakowski})}
If $K_0\subset K\subset X$, $(\Phi,K), (\Phi, K_0)  \in {\cal I}(X)$, $K$ is irreducible and $\overline 0 \neq h(\Phi, K_0)\neq h(\Phi, K)$, then there exists a full solution
$\sigma\colon \R\to K$ such that $\sigma(\R)\not\subset K_0$ and either $\alpha(\sigma)\subset K_0$ or $\omega(\sigma)\subset K_0$.
\end{theorem}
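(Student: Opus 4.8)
The plan is to argue by contradiction, converting the existence of a connecting orbit into a statement about \emph{reducibility} and then invoking the hypothesis that $K$ is irreducible. Concretely, I would assume that there is no full solution $\sigma\colon\R\to K$ with $\sigma(\R)\not\subset K_0$ and $\alpha(\sigma)\subset K_0$ or $\omega(\sigma)\subset K_0$, and from this assumption produce a splitting $K=K_0\cup K_2$ into two disjoint isolated invariant sets, each with nonzero Conley index, which contradicts irreducibility. Observe at the outset that $h(\Phi,K_0)\neq h(\Phi,K)$ forces $K_0\neq K$, so $K\setminus K_0\neq\emptyset$, and the whole point is to read this nonempty complement dynamically.

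The first structural fact I would record is that, under the no-connection assumption, \emph{every} full solution $\sigma$ passing through a point of $K\setminus K_0$ avoids $K_0$ entirely. Indeed, if $\sigma(t_0)\in K_0$ for some $t_0$, then forward invariance of $K_0$ gives $\omega(\sigma)\subset K_0$, and the no-connection assumption then forces $\sigma(\R)\subset K_0$, contradicting $\sigma(t)\in K\setminus K_0$ for the point in question. In particular, each $x\in K\setminus K_0$ lies on at least one full solution in $K$ that never meets $K_0$.

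The heart of the argument is to promote this pointwise statement to a uniform separation of $K_0$ from the rest of $K$ by a fixed neighborhood. I would fix an isolating neighborhood $U$ of $K_0$ with $\mathrm{Inv}_\Phi(\overline U)=K_0$ and $K_0\subset\mathrm{int}\,U$, together with a shrinking sequence of isolating neighborhoods $V_n\subset U$ satisfying $\bigcap_n\overline{V_n}=K_0$, and claim that for some $n$ no $K_0$-avoiding full solution in $K$ ever meets $\mathrm{int}\,V_n$. If this failed, then for each $n$ there would be a full solution $\sigma_n$ in $K$ avoiding $K_0$ with $\sigma_n(0)\in\mathrm{int}\,V_n$, so $\sigma_n(0)\to K_0$. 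Since $\sigma_n$ avoids $K_0=\mathrm{Inv}_\Phi(\overline U)$, it cannot remain in $\overline U$ for all time, hence leaves $\overline U$ forward or backward; passing to a subsequence I may treat the forward case, letting $t_n>0$ be the first exit time and $p_n=\sigma_n(t_n)\in\partial U$. A short compactness argument using $K_0\subset\mathrm{int}\,U$ and forward invariance of $K_0$ shows $t_n\to+\infty$; then, reparametrizing by $\rho_n(t):=\sigma_n(t+t_n)$ and invoking the \textbf{admissibility} of $\overline U$ to extract a limiting full solution $\rho_n\to\rho$, I obtain a full solution $\rho$ in $K$ with $\rho(0)=\lim p_n\in\partial U$, hence $\rho(0)\notin K_0$, and $\rho((-\infty,0])\subset\overline U$, whence $\alpha(\rho)\subset\mathrm{Inv}_\Phi(\overline U)=K_0$. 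This $\rho$ is precisely a forbidden connecting orbit, a contradiction (the backward-exit case is symmetric and yields $\omega(\rho)\subset K_0$). This limiting extraction is the step I expect to be the main obstacle, as it is exactly where strong admissibility of the isolating neighborhood must replace the missing local compactness of $X$.

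Granting the claim, I would set $K_2:=\mathrm{Inv}_\Phi(K\setminus\mathrm{int}\,V_n)$ for the good index $n$. By construction $K_2\subset K\setminus K_0$, so $K_2\cap K_0=\emptyset$; conversely each $x\in K\setminus K_0$ lies on a $K_0$-avoiding full solution which, by the claim, also avoids $\mathrm{int}\,V_n$, so $x\in K_2$. Hence $K=K_0\cup K_2$ is a disjoint union of isolated invariant sets, and a set of the form $N\setminus\mathrm{int}\,V_n$, with $N$ a strongly admissible isolating neighborhood of $K$ and after a harmless shrinking of $V_n$, serves as a strongly admissible isolating neighborhood of $K_2$, so that $(\Phi,K_2)\in\mathcal{I}(X)$. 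Property (H2) then gives $h(\Phi,K)=h(\Phi,K_0)\vee h(\Phi,K_2)$. If $h(\Phi,K_2)=\overline0$ this would yield $h(\Phi,K)=h(\Phi,K_0)$, contradicting $h(\Phi,K_0)\neq h(\Phi,K)$; hence $h(\Phi,K_2)\neq\overline0$. Since also $h(\Phi,K_0)\neq\overline0$, the decomposition $K=K_0\cup K_2$ exhibits $K$ as reducible, contradicting the irreducibility of $K$. Therefore the assumed nonexistence is untenable, and a full solution $\sigma$ with $\sigma(\R)\not\subset K_0$ and $\alpha(\sigma)\subset K_0$ or $\omega(\sigma)\subset K_0$ must exist.
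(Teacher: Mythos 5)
The paper does not actually prove this theorem: it is imported verbatim from Rybakowski's book \cite[Ch. I, Th. 11.5]{rybakowski}, so there is no in-paper argument to compare against; your proposal is, in substance, a correct reconstruction of the standard argument behind that citation. The architecture is right: assuming no connection exists, every full solution in $K$ through a point of $K\setminus K_0$ must avoid $K_0$ entirely (here you correctly use that an invariant set of a semiflow is positively invariant, by uniqueness of forward evolution); this is promoted to a uniform separation of $K_0$ from the $K_0$-avoiding solutions by a fixed neighborhood; one then gets the disjoint splitting $K=K_0\cup K_2$ with $K_2=\mathrm{Inv}_\Phi(K\setminus\mathrm{int}\,V_n)=K\setminus K_0$ isolated, with strongly admissible isolating neighborhood of the form $N\setminus\mathrm{int}\,V_m$ (admissibility and non-explosion pass to closed subsets), and property (H2) together with $h\vee\overline 0=h$ forces $h(\Phi,K_2)\neq\overline 0$, exhibiting $K$ as reducible --- a contradiction. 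All of this is sound, granted two standard facts of Rybakowski's theory that you use implicitly and should cite: isolated invariant sets admitting strongly admissible isolating neighborhoods are compact, and $\alpha$- and $\omega$-limit sets of solutions remaining in an admissible set are nonempty, invariant, and contained in its invariant part.

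The one place where you are too quick is the sentence ``the backward-exit case is symmetric.'' A semiflow is not time-reversible, and both delicate points of your forward argument change in the backward case. First, admissibility yields compactness of the \emph{terminal} points $\Phi_{t_n}(v_n)$ of long orbit segments lying in $\overline U$; the backward exit points $\sigma_n(s_n)$ are the \emph{initial} points of such segments, so admissibility of $\overline U$ says nothing about them --- one must instead invoke compactness of $K$ itself. Second, your proof that the exit times diverge uses joint continuity plus positive invariance of $K_0$; in the backward case, if $s_n\to s^*>-\infty$, continuity only gives $\Phi_{-s^*}(q)\in K_0$ for a limit point $q\in K\cap\partial U$, which is not contradicted by positive invariance but by your first structural fact (no full solution in $K$ through $q\in K\setminus K_0$ may reach $K_0$ in forward time). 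With these substitutions the backward case closes, and is in fact easier: the limiting object is simply the forward orbit of $q$, contained in $\overline U$, whose $\omega$-limit lies in $\mathrm{Inv}_\Phi(\overline U)=K_0$ by admissibility, with no diagonal extraction needed. So: correct approach and a correct forward case, but ``symmetric'' papers over an asymmetry that a careful write-up must address.
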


\section{Properties of abstract evolution equations with $m$-accretive operators}

Let $A\colon D(A)\multimap X$ defined in a Banach space $X$ be an $m$-accretive operator, $f\colon [0,T]\to X$, $T>0$, and $\bar u\in \overline{D(A)}$.
We shall consider the equation
\begin{equation}\label{1208-11022015}
\begin{cases}
\dot u(t) \in - A u(t) + f(t),& t\in [0,T],\\
u(0)=\bar u.
\end{cases}
\end{equation}

\begin{definition}
A continuous function $u\colon [0, T]\to X$ is called an \em{integral solution} of {\em (\ref{1208-11022015})} in $X$ if and only if $u(0)=\bar u$ and
$$
\|u(t)-v\| \leq \|u(s)-v\| + \int_{s}^{t} [u(\tau)-v, f(\tau)-w]_s \d\tau
$$
for all $(v,w)\in \Gr(A)$ and $0\leq s<t\leq T$, where $[x,y]_s
=\inf_{h>0}(\|x+hy\|-\|x\|)/h$.
\end{definition}
\begin{remark}  \label{07042015-1652}
(i) It appears that for any $\bar u\in \overline{D(A)}$ and $f\in L^1 (0, T; X)$ the problem (\ref{1208-11022015}) admits a unique integral solution (see e.g. \cite{Barbu}). It may be also shown that this integral solution is also a mild solution (see e.g.  \cite{Barbu}), and as such, is a limit of discrete  approximations. \\
\indent (ii) Now consider Banach spaces $X, \tilde X$ such that $X$ is continuously embedded into  $\tilde X$. Suppose that a $m$-accretive operator $A$ in $X$ has an extension $\tilde A$ in $\tilde X$ such that it is $m$-accretive in $\tilde X$.
Then, for any $\bar u \in X$, $f\in L^1(0, T; X)$, the integral solution of { (\ref{1208-11022015})} is also an integral solution of { (\ref{1208-11022015})} in the space $\tilde X$. It follows immediately by use of discrete  approximations (see (i)).\\
\indent (iii) Let $\Sigma_A (\bar u,f)$ denote the integral solution of (\ref{1208-11022015}). Then, for any $\bar u_1,\bar u_2\in \overline{D(A)}$ and
$f_1, f_2\in L^1(0,T;X)$ one has (see \cite{Barbu})
$$
\|\Sigma_A (\bar u_1, f_1)(t)\!-\!\Sigma_A (\bar u_2,f_2)(t)\|\leq \|\bar u_1-\bar u_2\|+\int_{0}^{t}\! \|f_1(s)-f_2(s)\|\d s \mbox{ for } t\in [0,T].
$$
\\
\indent (iv) In particular, if we take $f=0$, then one may define a family of operators $\{S_A(t)\colon \overline{D(A)}\to \overline{D(A)}\}_{t\geq 0}$ by
$S_A(t)\bar u:=u(t)$ where $u$ is the integral solution of $\dot u(t) \in - Au(t)$, $t>0$, with $u(0)=\bar u$ (existing due to Remark \ref{07042015-1652} (i)).
It appears that $S_A (0) \bar u=\bar u$, for all $\bar u\in \overline{D(A)}$, and $S_A(t)\circ S_A(s)=S_A(t+s)$, for any $t,s\geq 0$, the mapping
$ [0,+\infty)\times \overline{D(A)} \ni (t,\bar u)\mapsto S_A(t)\bar u \in X$ is continuous and
$$
\|S_A(t)\bar u - S_A(t)\bar v\|\leq \|\bar u-\bar v\| \ \mbox{ for all } \bar u,\bar v\in \overline{D(A)}.
$$
\end{remark}

Let $A_n\colon D(A_n)\to X$, $n\geq 0$, be $m$-accretive operators. We say that $A_n$ {\em converges} to $A$ {\em in the sense of graphs} (and denote $A_n\stackrel{\Gr}{\to} A$) if and only if $\Gr A\subset \liminf\Gr A_n$. This is equivalent by \cite[Proposition 4.4]{Barbu-Springer} to the convergence
\[\lim_{n\to\infty}(I+\lambda A_n)^{-1}\bar u\to (I+\lambda A_0)^{-1}\bar u\text{ for all }\bar u\in X,\ \lambda>0.\]
We shall use the following continuity and compactness result.
\begin{proposition} \label{22042015-1020}
Let $A_n\colon D(A_n)\multimap X$, $n\geq 0$, be $m$-accretive operators.
\begin{enumerate}[\upshape (i)]
\item {\em (Trotter-Kato theorem)} If $\bar u_n\to \bar u_0$ in $X$, $u_n\in \overline{D(A_n)}$ for all $n\geq 0$, $A_n\stackrel{\Gr}{\to} A$ and $f_n\to f_0$ in $L^1(0,T;X)$, then $\Sigma_{A_n}(\bar u_n, f_n)\to \Sigma_{A_0}(\bar u_0, f_0)$ in $C([0,T],X)$.
\item Assume that $\overline{D(A_n)} =\overline{D(A_1)}$ for any $n\geq 1$ and
the set $\bigcup_{n\geq 1} S_{A_n} (t) (B)$ is relatively compact for any bounded $B\subset \overline{D(A_1)}$ and $t>0$.
 If $\{\bar u_n\}_{n\geq 1} \subset \overline{D(A_1)}$ is bounded
 and $\{f_n \}\subset L^1(0,T; X)$ is uniformly integrable, then for any $t\in(0,T]$ the set $\{\Sigma_{A_n}(\bar u_n, f_n) (t)  \}_{n\geq 1}$ is relatively compact.
\end{enumerate}
\end{proposition}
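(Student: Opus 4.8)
The plan is to handle the two parts separately, reducing each to the contraction estimate and the semigroup facts already recorded in Remark~\ref{07042015-1652}, together with the equivalence of graph convergence and pointwise resolvent convergence.

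For part (i), I would first pass to fixed data. Writing $u_n:=\Sigma_{A_n}(\bar u_n,f_n)$ and applying Remark~\ref{07042015-1652}(iii) to the two sets of data $(\bar u_n,f_n)$ and $(\bar u_0,f_0)$ for the \emph{same} operator $A_n$, one gets
\[
\sup_{t\in[0,T]}\|\Sigma_{A_n}(\bar u_n,f_n)(t)-\Sigma_{A_n}(\bar u_0,f_0)(t)\|\leq \|\bar u_n-\bar u_0\|+\int_0^T\|f_n(s)-f_0(s)\|\,\d s\longrightarrow 0,
\]
so it suffices to prove $\Sigma_{A_n}(\bar u_0,f_0)\to\Sigma_{A_0}(\bar u_0,f_0)$ in $C([0,T],X)$ (this presupposes $\bar u_0\in\overline{D(A_0)}$, which holds since all domains share the same closure in the intended application). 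Approximating $f_0$ in $L^1(0,T;X)$ by a step function $g=\sum_i\chi_{[t_{i-1},t_i)}c_i$ and using Remark~\ref{07042015-1652}(iii) once more — now uniformly in $n$, including $n=0$ — reduces the problem to piecewise constant forcing. On each subinterval the equation is autonomous, driven by the $m$-accretive operator $A_n-c_i$, whose resolvent is $(I+\lambda(A_n-c_i))^{-1}\bar v=(I+\lambda A_n)^{-1}(\bar v+\lambda c_i)$; hence $A_n\stackrel{\Gr}{\to}A_0$ transfers to $A_n-c_i\stackrel{\Gr}{\to}A_0-c_i$, and the integral solution on that piece is the unforced evolution $S_{A_n-c_i}$.

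This isolates the genuine content: the autonomous convergence $S_{A_n}(s)\bar v_n\to S_{A_0}(s)\bar v_0$, uniformly for $s$ in bounded intervals, whenever $\bar v_n\to\bar v_0$ and the resolvents converge pointwise. I would deduce it from the Crandall--Liggett exponential formula $S_A(s)\bar v=\lim_m (I+\tfrac{s}{m}A)^{-m}\bar v$: pointwise resolvent convergence propagates through each fixed power $(I+\tfrac{s}{m}A_n)^{-m}$, while the Crandall--Liggett error estimate controls $\|(I+\tfrac{s}{m}A)^{-m}\bar v-S_A(s)\bar v\|$ by a quantity depending only on $s$, $m$ and the data, so an $\varepsilon/3$ argument closes the limit; the finitely many subintervals are then chained inductively using the nonexpansiveness of Remark~\ref{07042015-1652}(iv). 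I expect the main obstacle to be precisely the uniformity of this error estimate over the family $\{A_n\}$ and the bookkeeping when $\bar v_0$ lies only in $\overline{D(A_0)}$, where one approximates $\bar v_0$ through the resolvents; alternatively the whole of (i) can be quoted from the non-autonomous Trotter--Kato theory (e.g. \cite{Barbu}).

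For part (ii) I would argue by total boundedness. Fix $t\in(0,T]$ and, given $\varepsilon>0$, use uniform integrability of $\{f_n\}$ to choose $\delta\in(0,t)$ with $\sup_n\int_{t-\delta}^t\|f_n(s)\|\,\d s<\varepsilon$. By the restarting (uniqueness) property of integral solutions, $u_n(t)=\Sigma_{A_n}\big(u_n(t-\delta),f_n(\cdot+t-\delta)\big)(\delta)$, so comparing with the unforced evolution through Remark~\ref{07042015-1652}(iii),(iv),
\[
\|u_n(t)-S_{A_n}(\delta)\,u_n(t-\delta)\|\leq\int_{t-\delta}^t\|f_n(s)\|\,\d s<\varepsilon\qquad\text{for all }n.
\]
The family $\{u_n(t-\delta)\}_{n\geq1}$ is bounded: fixing $z\in\overline{D(A_1)}$ and comparing $u_n$ with $S_{A_n}(\cdot)z$ via Remark~\ref{07042015-1652}(iii) gives $\|u_n(t-\delta)\|\leq\|S_{A_n}(t-\delta)z\|+\|\bar u_n-z\|+\int_0^T\|f_n(s)\|\,\d s$, where the last two terms are bounded by the boundedness of $\{\bar u_n\}$ and uniform integrability, and $\{S_{A_n}(t-\delta)z\}_n$ is relatively compact, hence bounded, by the standing hypothesis applied to $B=\{z\}$. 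Since $u_n(t-\delta)\in\overline{D(A_n)}=\overline{D(A_1)}$, the hypothesis applied to the bounded set $B'=\{u_n(t-\delta)\}_n$ makes $\{S_{A_n}(\delta)u_n(t-\delta)\}_n$ relatively compact, i.e. totally bounded; a finite $\varepsilon$-net for it is then a $2\varepsilon$-net for $\{u_n(t)\}_n$. As $\varepsilon>0$ was arbitrary, $\{u_n(t)\}_n$ is totally bounded and therefore relatively compact in the Banach space $X$. Here the points to secure are the restarting identity and the a priori bound, both resting on uniqueness of integral solutions and Remark~\ref{07042015-1652}; the compactness itself is then a direct consequence of the assumed smoothing of the unforced semigroups.
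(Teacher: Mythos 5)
The paper offers no argument for this proposition at all: part (i) is quoted from \cite[Ch.~4, Th.~4.14]{Barbu-Springer} and part (ii) from \cite[Prop.~2.26]{Cwisz-JEE}, so anything you actually prove is by definition a different route. Your part (ii) is a complete and correct proof, and almost certainly the one behind the cited reference: uniform integrability isolates the tail of the forcing, the restart identity plus the contraction estimate of Remark~\ref{07042015-1652}(iii) approximate $u_n(t)$ within $\varepsilon$ by $S_{A_n}(\delta)u_n(t-\delta)$, the a priori bound on $\{u_n(t-\delta)\}_n$ is legitimate (uniform integrability on the finite interval $[0,T]$ does yield the uniform $L^1$ bound you need, by covering $[0,T]$ with finitely many sets of small measure), and the compactness hypothesis plus total boundedness finish the job. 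Your part (i) follows the standard Brezis--Pazy/Trotter--Kato strategy (step-function forcing, shifted operators $A_n-c_i$, Crandall--Liggett exponential formula), but as written it has two loose ends. First, your reduction to the fixed datum $\bar u_0$ requires $\bar u_0\in\overline{D(A_n)}$ for every $n$, which is not among the hypotheses of (i); the repair is to swap only the forcing (replace $f_n$ by the step function $g$) while keeping the initial data $\bar u_n$, so that the autonomous convergence lemma is invoked with varying initial points $\bar v_n\in\overline{D(A_n)}$, $\bar v_n\to\bar v_0$. Second, the uniformity over $n$ of the Crandall--Liggett error estimate --- which you flag yourself --- is the genuine content of the theorem: the estimate involves the norm of the minimal section of $A_n$ at the initial point, so one must first prove the convergence for initial data of the form $(I+\lambda A_n)^{-1}\bar v$, where these norms are uniformly bounded by $\lambda^{-1}\|\bar v-(I+\lambda A_n)^{-1}\bar v\|$, and then pass to general data by nonexpansiveness. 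With those two points secured your argument becomes a valid self-contained proof; what the paper's bare citation buys is simply not having to carry out this bookkeeping, at the cost of hiding where graph convergence and uniform integrability are actually used.
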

\begin{proof} (i) can be found in \cite[Ch. 4, Th. 4.14]{Barbu-Springer} and (ii) is proved in \cite[Prop. 2.26]{Cwisz-JEE}. \end{proof}

We shall also consider nonlinear problems of the form
\begin{equation}\label{07042015-1656}
\left\{
\begin{array}{l}
\dot u(t) \in -  A u(t) + F(u(t)),\ t\in [0,T],\\
u(0)=\bar u
\end{array}
\right.
\end{equation}
with a locally Lipschitz $F\colon X\to X$ and $\bar u\in \overline{D(A)}$. We shall say that a continuous $u\colon [0,T]\to X$ ($T>0$) is an integral solution of (\ref{07042015-1656}) if $u$
is an integral solution of (\ref{1208-11022015}) with $f\colon [0,T]\to X$ given by $f(t):=F(u(t))$ for $t\in [0, T]$, that is, if $u=\Sigma_{A}(\bar u,F\circ u)$. Let us state a general existence and uniqueness theorem.
\begin{proposition} \label{0208-29042015} $\mbox{ }$\\
{\em (i)} \parbox[t]{144mm}{For any  $\bar u\in \overline{D(A)}$ there exists a continuous $u\colon [0, T_{\bar u} )\to X$ with $T_{\bar u} \in (0, +\infty]$, called a maximal integral solution, such that for each $T\in (0,T_{\bar u})$ the function $u|_{[0, T]}$ is an integral solution of {\em (\ref{07042015-1656})}. Moreover, either $T_{\bar u} = +\infty$ or $T_{\bar u}<+\infty$ and $\limsup_{t\to T_{\bar u}^-} \|u(t)\| = +\infty$.}\\[0.5em]
{\em (ii)} \parbox[t]{144mm}{Let $A_n\colon D(A_n)\multimap X$, $n\geq 0$, be $m$-accretive operators with $\overline{D(A_n)}=\overline{D(A_1)}$ for all $n\geq 1$ and $A_n\stackrel{\Gr}{\to} A$. Let $F_n\colon X\to X$ have common Lipschitz constants on bounded subsets of $X$. Assume that $F_n (\bar u) \to F_0 (\bar u)$ for any $\bar u\in X$.
Let $u_n\colon [0,T_{\bar u_n})\to X$ for each $n\geq 0$ be the maximal integral solution of
\begin{equation}\label{22072015-0808}
\begin{cases}
\dot u(t)\in - A_n u(t)+F_n(u(t)),& t>0,\\
u(0)=\bar u_n
\end{cases}
\end{equation}
where $u_n\in \overline{D(A_n)}$ for all $n\geq 1$ and $\bar u_n \to \bar u_0$ as $n\to +\infty$. Then $\liminf_{n\to +\infty} {T_{\bar u_n}} \geq T_{\bar u_0}$ and $u_n\to u_0$ in $C([0,T], X)$ for any $T \in (0, T_{\bar u_0})$.}\\[0.5em]
{\em (iii)} \parbox[t]{144mm}{Assume that $m$-accretive operators $A_n\colon D(A_n)\multimap X$  with $\overline{D(A_n)}=\overline{D(A_1)}$, $n\geq 1$,  and locally Lipschitz $F_n\colon X\to X$, $n\geq 1$,  are  such that for any bounded $B\subset X$
\[
\bigcup_{n\geq 1} S_{A_n}(t) (B\cap \overline{D(A_n)}) \mbox{ is relatively compact and } \bigcup_{n\geq 1} F_n(B) \mbox{ is bounded.}
\]
If $u_n\colon  [0,T]\to X$, $n\geq 1$, are integral solutions of {\em (\ref{22072015-0808})} and there exists $R>0$ such that $\|u_n(t)\|\leq R$ for all $t\in [0,T]$ and $n\geq 1$, then the set $\{ u_n (t) \}_{n\geq 1}$ is relatively compact for any $t\in (0,T]$.}
\end{proposition}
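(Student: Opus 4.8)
The plan is to reduce all three parts to the fixed-point identity $u=\Sigma_A(\bar u,F\circ u)$, with the nonexpansive estimate of Remark~\ref{07042015-1652}(iii) serving as the workhorse throughout.

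For (i) I would obtain a local solution by a contraction argument. Fix $\bar u\in\overline{D(A)}$ and $\rho>0$, and let $L,M$ be a Lipschitz constant and a bound for $F$ on $\overline{B}(\bar u,\rho)$. On the complete metric space $\mathcal M_\tau:=\{u\in C([0,\tau],X)\mid u(0)=\bar u,\ \|u(t)-\bar u\|\le\rho\}$ set $\Psi(u):=\Sigma_A(\bar u,F\circ u)$. Since $\Sigma_A(\bar u,0)=S_A(\cdot)\bar u$, Remark~\ref{07042015-1652}(iii),(iv) give $\|\Psi(u)(t)-\bar u\|\le M\tau+\sup_{s\le\tau}\|S_A(s)\bar u-\bar u\|$ and $\|\Psi(u)(t)-\Psi(w)(t)\|\le L\tau\,\|u-w\|_{C([0,\tau],X)}$; choosing $\tau$ small makes $\Psi$ a self-map of $\mathcal M_\tau$ and a contraction, so Banach's theorem yields a unique local integral solution. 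The same estimate with Gronwall's inequality gives uniqueness on common intervals, and patching produces a maximal solution on $[0,T_{\bar u})$. For the alternative, if $T_{\bar u}<+\infty$ but $\|u\|$ stays bounded by $R$ on $[0,T_{\bar u})$, then $f:=F\circ u\in L^\infty(0,T_{\bar u};X)\subset L^1(0,T_{\bar u};X)$ is a fixed forcing; by Remark~\ref{07042015-1652}(i), $u=\Sigma_A(\bar u,f)$ extends continuously to $t=T_{\bar u}$, and restarting from $u(T_{\bar u})\in\overline{D(A)}$ prolongs $u$ past $T_{\bar u}$, a contradiction.

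For (ii) fix $T\in(0,T_{\bar u_0})$, put $R_0:=\sup_{[0,T]}\|u_0(t)\|$, and let $L$ be a common Lipschitz constant of the $F_n$ on $\overline{B}(0,R_0+1)$; the common constants together with $F_n(\bar u)\to F_0(\bar u)$ make $\{F_n\}$ uniformly bounded on bounded sets. I would compare $u_n$ with the intermediate $v_n:=\Sigma_{A_n}(\bar u_n,F_0\circ u_0)$: Proposition~\ref{22042015-1020}(i) gives $v_n\to\Sigma_{A_0}(\bar u_0,F_0\circ u_0)=u_0$ in $C([0,T],X)$, since $\bar u_n\to\bar u_0$, $A_n\stackrel{\Gr}{\to}A_0$ and the forcing is fixed. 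Applying Remark~\ref{07042015-1652}(iii) to $u_n,v_n$ and splitting $F_n(u_n)-F_0(u_0)=(F_n(u_n)-F_n(u_0))+(F_n(u_0)-F_0(u_0))$ yields, as long as $u_n$ stays in $\overline{B}(0,R_0+1)$,
\[
\|u_n(t)-u_0(t)\|\le L\int_0^t\|u_n(s)-u_0(s)\|\,\d s+\delta_n+\|v_n-u_0\|_{C([0,T],X)},
\]
with $\delta_n:=\int_0^T\|F_n(u_0(s))-F_0(u_0(s))\|\,\d s\to 0$ by dominated convergence. Gronwall then bounds $\|u_n-u_0\|$ by a null sequence on the subinterval where $\|u_n\|\le R_0+1$; for large $n$ this bound is below $\tfrac12$, so $u_n$ cannot leave the ball nor, by the alternative in (i), explode before $T$. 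Hence $T_{\bar u_n}>T$ eventually, the estimate holds on $[0,T]$, and letting $T\uparrow T_{\bar u_0}$ gives $\liminf_n T_{\bar u_n}\ge T_{\bar u_0}$ together with $u_n\to u_0$ in $C([0,T],X)$.

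For (iii) I would appeal directly to Proposition~\ref{22042015-1020}(ii). Writing $\bar u_n:=u_n(0)$ and $f_n:=F_n\circ u_n$, the uniform bound $\|u_n(t)\|\le R$ and boundedness of $\bigcup_n F_n(\overline{B}(0,R))$ give $\sup_n\|f_n\|_{L^\infty(0,T;X)}\le M<+\infty$, so $\{f_n\}$ is uniformly integrable in $L^1(0,T;X)$ because $\int_E\|f_n\|\le M|E|$. As $\{\bar u_n\}$ is bounded, $\overline{D(A_n)}=\overline{D(A_1)}$, and the smoothing hypothesis is exactly that of Proposition~\ref{22042015-1020}(ii), the latter yields relative compactness of $\{u_n(t)\}=\{\Sigma_{A_n}(\bar u_n,f_n)(t)\}$ for every $t\in(0,T]$. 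I expect the main obstacle to be the bookkeeping in (ii): because the $u_n$ live a priori on intervals $[0,T_{\bar u_n})$ of unknown length, the Gronwall estimate and the lower bound on $T_{\bar u_n}$ must be bootstrapped together, the blow-up alternative from (i) being what converts the a priori bound into genuine existence up to $T$; parts (i) and (iii) are otherwise routine given Remark~\ref{07042015-1652} and Proposition~\ref{22042015-1020}.
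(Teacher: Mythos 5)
Your proposal is correct and follows essentially the same route as the paper: part (ii) is the same Trotter--Kato plus Gronwall continuation argument combined with the blow-up alternative (the paper compares $u_n$ with $\Sigma_{A_n}(\bar u_n, F_n\circ u_0)$ rather than your $\Sigma_{A_n}(\bar u_n, F_0\circ u_0)$, a purely cosmetic difference in where the term $F_n(u_0)-F_0(u_0)$ is absorbed), and part (iii) is the same one-line reduction to Proposition \ref{22042015-1020}(ii) with $f_n:=F_n\circ u_n$. The only divergence is part (i), which the paper dismisses as classical with a citation to Barbu, whereas you supply the standard contraction-plus-continuation proof; that argument is sound and consistent with what the cited reference provides.
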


\begin{proof} (i) is classic (see \cite{Barbu}).

(ii) Fix $T\in (0,T_{\bar u_0})$. Let $R:=\max_{t\in [0,T]} \|u_0(t)\|$ and let $L_R$ be the common Lipschitz constant for $F_n$, $n\geq 0$, on the ball $B(0,3R)$. Define
\[\alpha_n:=\sup_{0\leq t\leq T}\|\Sigma_{A_n}(\bar u_n, F_n \circ u_0)(t)-\Sigma_{A_0}(\bar u_0, F_0 \circ u_0)(t)\|.\] From Proposition \ref{22042015-1020} (i) it follows that $\alpha_n\to 0^+$ and therefore $\alpha_n e^{T L_R } < R$ for $n\geq n_0$ for some $n_0\in\N$. Fix $n\geq n_0$. In order to prove the inequality from the conclusion it suffices to show that $T< T_{\bar u_n}$. Suppose, contrary to our claim, that $T_{\bar u_n}\leq T$. Therefore $u_n([0,t])\subset B(0,3R)$ and $\|u_n(t)\|>2R$ for some $t<T$, which follows from (i).

Using Remark \ref{07042015-1652} (iii) one has, for $\tau\in [0,t]$,
\begin{align*}
\|u_n(\tau)-u_0(\tau)\|&\leq
\|\Sigma_{A_n}(\bar u_n, F_n \circ u_n)(\tau)-\Sigma_{A_n}\!(\bar u_n,  F_n \circ u_0)(\tau)\| \\
& +\|\Sigma_{ A_n} (\bar u_n, F_n \circ u_0)(\tau)-\Sigma_{A_0} (\bar u_0, F_0\circ  u_0)(\tau)\|\\
&\leq \alpha_n+L_R \int_{0}^{\tau}\|u_n(s)-u_0(s)\|\d s.
\end{align*}
In consequence, by the Gronwall inequality we obtain the estimate
\begin{equation} \label{t6}
\|u_n (\tau)-u_0(\tau)\|\leq \alpha_n e^{T L_R } < R \ \mbox{ for } \tau\in [0,t].
\end{equation}
This means that $\|u_n(t)\|\leq \|u_0(t)\|+R\leq 2R$. This contradicts our assumption.

By the estimate $\|u_n (t)-u_0(t)\|\leq \alpha_n e^{T L_R }$ for $t\leq T$, the convergence from the second part of the conclusion holds true.

(iii) is a direct consequence of Proposition \ref{22042015-1020} (ii) with $f_n:= F_n\circ u_n$.

\end{proof}

\begin{remark}\label{24022016-1242}
Let $D$ be the set of $(t,\bar u)\in [0,+\infty)\times X$ such that the problem (\ref{07042015-1656}) has a solution on $[0,t]$ and let $\Phi\colon D\to X$ be defined by $\Phi(t,\bar u)=\Phi_t (\bar u):=u(t)$, where $u\colon [0,t]\to X$ is the integral solution of (\ref{07042015-1656}). It clearly follows from Proposition \ref{0208-29042015} that $\Phi$ is a local semiflow on $X$.
\end{remark}

Now suppose that $H$ is a Hilbert space with the scalar product $\langle \cdot, \cdot \rangle$ and the norm $\|\cdot\|$ and consider a lower semicontinuous convex functional $\varphi\colon  X\to [0,+\infty]$ with $D(\varphi):=\{u\in H\mid \varphi(u)<+\infty \}$ and $\varphi(0)=0$. Recall that the {\em subdifferential} $\partial \varphi\colon D(\partial \varphi) \to H$ of $\varphi$ is defined by $D(\partial \varphi):= \{ u\in D(\varphi) \mid \mbox{ there exists } \xi\in H \mbox{ such that } \langle \xi, v- u \rangle \leq \varphi (v) - \varphi (u) \mbox{ for all } v\in H\}$ and $\partial \varphi(u):=\{\xi \in H \mid \langle \xi, v- u \rangle \leq \varphi(v)-\varphi(u) \mbox{ for all } v\in H\}$. It is known that $\partial \varphi$ is a $m$-accretive operator in $H$. Hence, if $f\in L^2(0,T;H)$ and $\bar u\in H$, one can consider the following problem
\begin{equation} \label{07042015-1749}
\left\{
\begin{array}{l}
\dot u(t) + \partial \varphi u(t) \ni f (t),\ t>0,\\
u(0)=\bar u.
\end{array}
\right.
\end{equation}
It appears that integral solutions in this case are more regular and are strong solutions.
\begin{proposition} \label{08042015-1552} {\em (\cite[Ch. 4, Th. 4.11 and Lem. 4.4]{Barbu-Springer})}
If $f\in L^2(0, T; H)$, $\bar u\in H$ and $u\in C([0,T],H)$ is an integral solution of {\em (\ref{07042015-1749})}, then
$u$ is a.e. differentiable on $(0,T)$ and has the following properties\\
{\em (i)} $u(t)\in D(\partial \varphi)$ and $\dot u(t) + \partial \varphi (u(t))\ni f(t)$ for a.e. $t\in (0, T)$;\\
{\em (ii)} $\left(t\mapsto  t^{1/2} \dot u(t)\right) \in L^2 (0, T; H)$ and $\varphi\circ u \in L^1(0, T)$;\\
{\em (iii)} \parbox[t]{144mm}{if moreover $\bar u \in D(\varphi)$, then $u(t)\in D (\varphi)$, for all $t\in [0,T]$, the function
$\varphi \circ u\colon [0,T]\to \R$ is absolutely continuous,
$$
\frac{1}{2}\int_{0}^{t} \|\dot u(s)\|^2 \d s + \varphi(u(t)) \leq \frac{1}{2}\int_{0}^{t} \|f(s)\|^2 \d s +\varphi (\bar u), \ \mbox{ for all } t\in [0,T],
$$
and
$$
\frac{d}{dt}\varphi (u(t))=-\|\dot u(t)\|^2 + (\dot u(t), f(t)) \ \mbox{ for a.e. } t\in [0,T].
$$
}\\
In particular, for all $\bar u\in H$ the function $\varphi\circ u$ is continuous on $(0,T]$.
\end{proposition}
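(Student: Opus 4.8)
The plan is to prove the statement by the classical Moreau--Yosida regularization of $\part\varphi$, combined with two energy estimates and a compactness/monotonicity passage to the limit. For $\lambda>0$ let $J_\lambda:=(I+\lambda\part\varphi)^{-1}$ be the resolvent and let $\varphi_\lambda(u):=\inf_{v\in H}\big(\tfrac{1}{2\lambda}\|u-v\|^2+\varphi(v)\big)$ be the Moreau--Yosida approximation, which is convex, everywhere finite, Fr\'echet differentiable with globally Lipschitz gradient $\nabla\varphi_\lambda=\lambda^{-1}(I-J_\lambda)=(\part\varphi)_\lambda$ satisfying $\nabla\varphi_\lambda(u)\in\part\varphi(J_\lambda u)$; moreover $0\le\varphi_\lambda(u)\le\varphi(u)$, $\varphi_\lambda(u)\uparrow\varphi(u)$ and $J_\lambda u\to u$ as $\lambda\downarrow0$, and $\varphi_\lambda(0)=\varphi(0)=0$ with $\nabla\varphi_\lambda(0)=0$. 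Since $\nabla\varphi_\lambda$ is Lipschitz, the regularized Cauchy problem $\dot u_\lambda+\nabla\varphi_\lambda(u_\lambda)=f$, $u_\lambda(0)=\bar u$, has a unique solution $u_\lambda\in W^{1,2}(0,T;H)$, which is at the same time the integral solution $\Sigma_{(\part\varphi)_\lambda}(\bar u,f)$ for the $m$-accretive operator $(\part\varphi)_\lambda$.

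First I would derive the energy identities. Taking the inner product of the regularized equation with $\dot u_\lambda(t)$ and using $\tfrac{\d}{\d t}\varphi_\lambda(u_\lambda(t))=\langle\nabla\varphi_\lambda(u_\lambda(t)),\dot u_\lambda(t)\rangle$ yields $\|\dot u_\lambda(t)\|^2+\tfrac{\d}{\d t}\varphi_\lambda(u_\lambda(t))=\langle f(t),\dot u_\lambda(t)\rangle$. When $\bar u\in D(\varphi)$, Young's inequality and integration give the uniform bound $\tfrac12\int_0^t\|\dot u_\lambda\|^2\,\d s+\varphi_\lambda(u_\lambda(t))\le\tfrac12\int_0^t\|f\|^2\,\d s+\varphi(\bar u)$. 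For the weighted estimate of (ii) valid for arbitrary $\bar u\in H$, I would first test the convexity inequality at the minimizer $0$, namely $\varphi_\lambda(0)-\varphi_\lambda(u_\lambda)\ge\langle\nabla\varphi_\lambda(u_\lambda),-u_\lambda\rangle$ with $\nabla\varphi_\lambda(u_\lambda)=f-\dot u_\lambda$, to get $\varphi_\lambda(u_\lambda(t))\le\langle f,u_\lambda\rangle-\tfrac12\tfrac{\d}{\d t}\|u_\lambda\|^2$; integrating and using the a priori bound $\|u_\lambda(t)\|\le\|\bar u\|+\int_0^t\|f\|\,\d s$ (the contraction estimate of Remark \ref{07042015-1652}(iii), since the zero data problem is solved by $0$) produces a bound on $\int_0^T\varphi_\lambda(u_\lambda)\,\d t$ that is uniform in $\lambda$ even when $\bar u\notin D(\varphi)$. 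Multiplying the energy identity by $t$, rewriting $t\,\tfrac{\d}{\d t}\varphi_\lambda=\tfrac{\d}{\d t}(t\varphi_\lambda)-\varphi_\lambda$, integrating, and absorbing $t\|\dot u_\lambda\|^2$ by Young's inequality then bounds $\int_0^T t\|\dot u_\lambda(t)\|^2\,\d t$ uniformly in $\lambda$, which is precisely the $t^{1/2}$-weighted estimate.

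Next I would pass to the limit $\lambda\downarrow0$. Since $(\part\varphi)_\lambda\stackrel{\Gr}{\to}\part\varphi$ and $u_\lambda=\Sigma_{(\part\varphi)_\lambda}(\bar u,f)$, the Trotter--Kato theorem (Proposition \ref{22042015-1020}(i)) gives $u_\lambda\to u$ in $C([0,T],H)$, where $u$ is the integral solution of (\ref{07042015-1749}); by uniqueness this is the $u$ of the statement. The uniform estimates give $\dot u_\lambda\rightharpoonup\dot u$ weakly in $L^2_{\mathrm{loc}}(0,T;H)$ and $t^{1/2}\dot u_\lambda\rightharpoonup t^{1/2}\dot u$ weakly in $L^2(0,T;H)$, whence $t^{1/2}\dot u\in L^2(0,T;H)$. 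Writing $\nabla\varphi_\lambda(u_\lambda)=f-\dot u_\lambda\rightharpoonup f-\dot u$ and recalling $\nabla\varphi_\lambda(u_\lambda(t))\in\part\varphi(J_\lambda u_\lambda(t))$ with $J_\lambda u_\lambda\to u$ in $L^2_{\mathrm{loc}}$, the closedness of the maximal monotone graph $\part\varphi$ in its integrated weak--strong form yields $f(t)-\dot u(t)\in\part\varphi(u(t))$ for a.e.\ $t\in(0,T)$, proving (i). Finally, from $\varphi_\lambda(u_\lambda(t))\ge\varphi(J_\lambda u_\lambda(t))$, pointwise lower semicontinuity of $\varphi$ and Fatou's lemma applied to the uniform $L^1$ bound give $\varphi\circ u\in L^1(0,T)$, completing (ii).

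For (iii), with $u(t)\in D(\part\varphi)$ a.e.\ and $\dot u\in L^2_{\mathrm{loc}}(0,T;H)$ in hand, I would invoke the chain rule for lower semicontinuous convex functionals along absolutely continuous Hilbert-space curves: if $u\in W^{1,2}_{\mathrm{loc}}$ takes values in $D(\part\varphi)$ a.e., then $t\mapsto\varphi(u(t))$ is locally absolutely continuous and $\tfrac{\d}{\d t}\varphi(u(t))=\langle\xi(t),\dot u(t)\rangle$ for every measurable selection $\xi(t)\in\part\varphi(u(t))$; the choice $\xi=f-\dot u$ gives $\tfrac{\d}{\d t}\varphi(u(t))=-\|\dot u(t)\|^2+\langle\dot u(t),f(t)\rangle$ a.e. When $\bar u\in D(\varphi)$, the first energy estimate and lower semicontinuity upgrade this to global absolute continuity of $\varphi\circ u$ on $[0,T]$ and to the stated energy inequality, while for arbitrary $\bar u\in H$ the same argument on each interval $[\delta,T]$ (using $u(\delta)\in D(\part\varphi)$ for a.e.\ small $\delta$) yields continuity of $\varphi\circ u$ on $(0,T]$. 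The main obstacle is exactly the rigorous justification of this chain rule together with the identification $f-\dot u\in\part\varphi(u)$: both must be carried out through the Yosida approximation and an integrated weak-limit ($\liminf$/monotonicity) argument rather than pointwise, and it is here that the $t^{1/2}$-weighted estimate is indispensable, since for $\bar u\notin D(\varphi)$ the velocity $\dot u$ need not belong to $L^2$ up to $t=0$ and the energy may dissipate non-integrably near the initial time.
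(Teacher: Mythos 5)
Your proposal is correct and is precisely the classical Moreau--Yosida regularization argument of Br\'ezis and Barbu that the paper relies on: the paper gives no proof of its own but cites \cite[Ch.~4, Th.~4.11 and Lem.~4.4]{Barbu-Springer}, whose proof proceeds exactly as you describe (regularized flow, the two energy identities, the $t$-weighted estimate for $\bar u\notin D(\varphi)$, passage to the limit via demiclosedness of the maximal monotone graph, and the chain rule for convex functionals along $W^{1,2}$ curves). No gaps; the only caveat worth recording is that $J_\lambda u\to u$ requires $u\in\overline{D(\varphi)}$, which is automatic here since integral solutions take values in $\overline{D(\partial\varphi)}$.
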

In order to estimate time derivative of solutions we shall need the following result.
\begin{proposition} {\em (See \cite[Th. 4.12]{Barbu-Springer})}\label{12042015-1740}
Assume that $\bar u \in \overline{D(\partial \varphi)}$  and $f\in W^{1,1}([0,T], H)$.
If $u$ is the integral solution of {\em (\ref{07042015-1749})} with $u(0)=\bar u$, then, for all $t\in (0,T]$, $u(t)\in D(\partial \varphi)$,
$$
\dot u_{+}(t) + (\partial \varphi)(u(t))\ni f(t)
$$
and
\begin{equation}\label{15042015-1334}
t^2 \|\dot u (t)\|^2 \leq \int_{0}^{t} s\|f(s)\|^2 \d s + 2\left(\|\bar u\| + \int_{0}^{t} \|f(s)\| \d s \right)^2 + \frac{t^2}{2} \left( \int_{0}^{t} \| \dot f(s) \| \d s\right)^2.
\end{equation}
In particular, if $f\equiv 0$, then $t\|\dot u(t)\|\leq\sqrt 2\|\bar u\|$.
\end{proposition}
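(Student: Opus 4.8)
The plan is to read off the two mechanisms behind the right-hand side of (\ref{15042015-1334}): a monotonicity / difference-quotient argument producing the term $\frac{t^2}{2}(\int_0^t\|\dot f\|\,\d s)^2$, and the subdifferential energy estimates of Proposition \ref{08042015-1552}, which together with an averaging in $t$ produce the other two terms. Since Proposition \ref{08042015-1552} already makes $u$ a strong solution with $u(t)\in D(\partial\varphi)$ and $\dot u(t)+\partial\varphi(u(t))\ni f(t)$ for a.e.\ $t$, the new content is (a) upgrading these to hold at every $t\in(0,T]$, with an honest right derivative $\dot u_+$, and (b) the quantitative bound. As $\bar u$ need not lie in $D(\varphi)$ (only in $\overline{D(\partial\varphi)}$, so possibly $\varphi(\bar u)=+\infty$), I would not start the energy identity at $t=0$; instead I would work on intervals $(\epsilon,t]$ with $u(\epsilon)\in D(\partial\varphi)$ and let $\epsilon\to0^+$, the weight $s$ in the estimate being exactly what absorbs the singularity of $\dot u$ near $0$.

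First I would set up the difference-quotient comparison. Because $\partial\varphi$ is autonomous, $u(\cdot+h)$ is for $h>0$ the integral solution driven by $f(\cdot+h)$ started from $u(h)$; restarting the equation at time $s$ and applying the contraction estimate of Remark \ref{07042015-1652}(iii) gives
\[\|u(t+h)-u(t)\|\leq\|u(s+h)-u(s)\|+\int_s^t\|f(\tau+h)-f(\tau)\|\,\d\tau,\qquad 0<s\le t.\]
Dividing by $h$ and letting $h\to0^+$, with $f\in W^{1,1}([0,T],H)$ so that $h^{-1}(f(\cdot+h)-f(\cdot))\to\dot f$ in $L^1$, I obtain $\|\dot u(t)\|\leq\|\dot u(s)\|+\int_s^t\|\dot f\|\,\d\tau$; equivalently $t\mapsto\|\dot u(t)\|-\int_0^t\|\dot f\|\,\d\tau$ is nonincreasing on $(0,T]$. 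This one inequality does double duty: combined with the demiclosedness of the maximal monotone graph of $\partial\varphi$ it yields the existence of $\dot u_+(t)$ and the pointwise inclusion $\dot u_+(t)+\partial\varphi(u(t))\ni f(t)$ with $u(t)\in D(\partial\varphi)$ for every $t\in(0,T]$, and it supplies the near-monotonicity $\big(\|\dot u(t)\|-\int_0^t\|\dot f\|\,\d s\big)^2\leq\|\dot u(s)\|^2$ for $0<s\le t$ used below.

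Next I would run the energy computation. With $\xi(t):=f(t)-\dot u(t)\in\partial\varphi(u(t))$ and $\varphi(0)=0=\min\varphi$, the subgradient inequality gives $\langle\xi(t),u(t)\rangle\geq\varphi(u(t))\geq0$, hence $\tfrac12\frac{d}{dt}\|u(t)\|^2=\langle\dot u,u\rangle\leq\|f\|\,\|u\|$ and so $\|u(t)\|\leq\|\bar u\|+\int_0^t\|f\|\,\d s=:M$; this is the source of the term $2M^2$. Multiplying the identity $\frac{d}{ds}\varphi(u(s))=\langle f,\dot u\rangle-\|\dot u\|^2$ of Proposition \ref{08042015-1552}(iii) by $s$ and integrating over $(\epsilon,t)$ --- the boundary contribution $\epsilon\varphi(u(\epsilon))$ vanishing along a suitable sequence $\epsilon\to0^+$ since $\varphi\circ u\in L^1$ by Proposition \ref{08042015-1552}(ii), and $-t\varphi(u(t))\leq0$ being of favourable sign --- leaves a bound $\int_0^t s\|\dot u\|^2\,\d s\lesssim\int_0^t s\|f\|^2\,\d s+\int_0^t\varphi(u(s))\,\d s$, while the inequality $\langle\xi,u\rangle\geq\varphi(u)$ controls $\int_0^t\varphi(u(s))\,\d s$ by $M^2$. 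Finally, multiplying the near-monotonicity inequality by $s$ and integrating over $(0,t)$ gives $\frac{t^2}{2}\big(\|\dot u(t)\|-\int_0^t\|\dot f\|\,\d s\big)^2\leq\int_0^t s\|\dot u(s)\|^2\,\d s$; inserting the energy bound and rearranging yields an estimate of the form (\ref{15042015-1334}). For $f\equiv0$ this collapses to $t\|\dot u(t)\|\leq\sqrt2\,\|\bar u\|$, which one also sees directly from the facts that then $\|\dot u\|$ and $\varphi(u(\cdot))$ are nonincreasing and $\int_0^t\varphi(u(s))\,\d s\le\tfrac12\|\bar u\|^2$.

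The hard part, where I would spend the most care, is (a): turning the almost-everywhere assertions of Proposition \ref{08042015-1552} into genuine pointwise statements with a right derivative at every $t\in(0,T]$. This means justifying the difference-quotient passage to the limit strongly enough to control the one-sided derivative, invoking demiclosedness of $\partial\varphi$ to secure $u(t)\in D(\partial\varphi)$ and $\dot u_+(t)+\partial\varphi(u(t))\ni f(t)$ for each $t$, and verifying enough (lower semi)continuity of $t\mapsto\|\dot u(t)\|$ for the monotone quantity to be meaningful pointwise. Recovering the exact constants $1$, $2$ and $\tfrac12$ in (\ref{15042015-1334}) is then a matter of performing the Cauchy--Schwarz/Young splittings in the energy step without loss, which the weight $s$ conveniently allows; alternatively one proves the clean case $\bar u\in D(\partial\varphi)$ with smooth $f$ first and passes to the limit through the contraction estimate of Remark \ref{07042015-1652}(iii).
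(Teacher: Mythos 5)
The paper itself contains no proof of this proposition: it is imported verbatim from Barbu's monograph (\cite[Th. 4.12]{Barbu-Springer}), so there is no internal argument to compare yours against. What you have written is, in substance, the classical Brezis--Barbu smoothing argument for subdifferential flows, and the two mechanisms you isolate --- the translated contraction estimate of Remark \ref{07042015-1652}(iii) giving the near-monotonicity of $t\mapsto\|\dot u(t)\|-\int_0^t\|\dot f\|\d\tau$, and the $s$-weighted energy identity combined with $\varphi(u)\le\langle f-\dot u,u\rangle$ (from $\varphi(0)=0$) to bound $\int_0^t s\|\dot u\|^2\d s$ by $\int_0^t s\|f\|^2\d s+2\bigl(\|\bar u\|+\int_0^t\|f\|\d s\bigr)^2$ --- are exactly the right ones, as is the device of integrating on $(\epsilon,t]$ and using $\varphi\circ u\in L^1(0,T)$ to dispose of the boundary term $\epsilon\varphi(u(\epsilon))$ along a sequence.

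Three local points would need repair in a written-out version. First, the squared form of the near-monotonicity, $\bigl(\|\dot u(t)\|-\int_0^t\|\dot f\|\d s\bigr)^2\le\|\dot u(s)\|^2$, is false when $\|\dot u(t)\|<\int_0^t\|\dot f\|\d s$ and $\|\dot u(s)\|$ is small; one should keep the inequality linear, multiply $\|\dot u(t)\|\le\|\dot u(s)\|+\int_0^t\|\dot f\|\d\tau$ by $s$, integrate in $s$, and apply Cauchy--Schwarz to $\int_0^t s\|\dot u(s)\|\d s$. Second, even after that correction the splittings you describe reproduce the three terms of (\ref{15042015-1334}) only up to multiplicative constants (a cross term survives any naive squaring of a sum), so the phrase ``rearranging yields an estimate of the form (\ref{15042015-1334})'' conceals precisely the bookkeeping in Barbu's proof that produces the constants $1$, $2$ and $\tfrac{1}{2}$; for the way the proposition is used in this paper (boundedness of $\dot u_n(t)$ in Theorem \ref{1502-29072015}, and the $f\equiv0$ case in Lemma \ref{08042015-1551}(iii)) any fixed constants would suffice. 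Third, the everywhere statements --- $u(t)\in D(\partial\varphi)$ and the existence of the right derivative $\dot u_+(t)$ at \emph{every} $t\in(0,T]$ --- are the genuinely technical content; demiclosedness of the maximal monotone graph does give the inclusion $u(t)\in D(\partial\varphi)$ from bounded difference quotients, but the existence of $\dot u_+(t)$ then requires restarting the flow at time $t$ with datum in $D(\partial\varphi)$ and invoking the regularity theorem for such data, which is itself a nontrivial quoted fact rather than a consequence of demiclosedness alone. None of these is a wrong turn: the skeleton is the correct one, and completing it amounts to reproducing the cited proof.
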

\noindent We shall need the following compactness criterion being an extension of \cite[Ch. 4, Th. 2.4]{Barbu} to a family of semigroups generated by subdifferentials.
\begin{proposition} \label{23042015-2220}
Let $\varphi_n\colon  H\to [0,+\infty]$, $n\geq 1$, are lower semicontinuous convex functions such that $D(\varphi_n):=\{u\in H\mid \varphi_n(u)<+\infty \}$ is dense in $H$ and $\varphi_n (0)=0$, $n\geq 1$. If for any $\lambda>0$  the set $L_\lambda:=\bigcup_{n\geq 1} \{ u\in H \mid \varphi_n (u) \leq  \lambda \}$ is relatively compact, then the family of semigroups $\{ S_{\partial \varphi_n} (t)\colon H\to H\}_{t\geq 0}$ is compact, i.e. the set $\bigcup_{n\geq 1} S_{\partial \varphi_n} (t) (B)$ is relatively compact for all $t>0$ and any bounded $B\subset H$.
\end{proposition}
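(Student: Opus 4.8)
The plan is to reduce the asserted family compactness to the hypothesis on the sublevel sets $L_\lambda$ by showing that, for a fixed $t>0$ and a fixed bounded $B$, the values of $\varphi_n$ along the trajectories emanating from $B$ are bounded \emph{uniformly in $n$}. So first I would fix $t>0$ and a bounded $B\subset H$ and put $M:=\sup_{\bar u\in B}\|\bar u\|$. For $\bar u\in B$ write $u(s):=S_{\partial\varphi_n}(s)\bar u$, the integral solution of $\dot u+\partial\varphi_n(u)\ni 0$ with $u(0)=\bar u$. Since $\varphi_n\ge 0=\varphi_n(0)$, the inequality $\langle 0,v-0\rangle\le\varphi_n(v)-\varphi_n(0)$ holds for all $v$, so $0\in\partial\varphi_n(0)$ and hence $0$ is a stationary point of the semigroup. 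By the contraction property of Remark \ref{07042015-1652}(iv), $\|u(s)\|=\|S_{\partial\varphi_n}(s)\bar u-S_{\partial\varphi_n}(s)0\|\le\|\bar u\|\le M$ for every $s\ge 0$.

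Next I would invoke Proposition \ref{12042015-1740} with $f\equiv 0$. It gives, for each $n$ and each $\bar u\in B$, that $u(t)\in D(\partial\varphi_n)$, that $-\dot u_{+}(t)\in\partial\varphi_n(u(t))$, and crucially the \emph{$n$-independent} bound $t\|\dot u_{+}(t)\|\le\sqrt 2\,\|\bar u\|\le\sqrt 2\,M$. Testing the subdifferential inequality $\langle \xi,v-u(t)\rangle\le\varphi_n(v)-\varphi_n(u(t))$, valid for $\xi=-\dot u_{+}(t)$, at the point $v=0$ and using $\varphi_n(0)=0$ yields $\varphi_n(u(t))\le-\langle\dot u_{+}(t),u(t)\rangle\le\|\dot u_{+}(t)\|\,\|u(t)\|\le\sqrt 2\,M^2/t$. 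Thus $\varphi_n\big(S_{\partial\varphi_n}(t)\bar u\big)\le\sqrt 2\,M^2/t=:\lambda$ for all $n\ge 1$ and all $\bar u\in B$, with $\lambda$ depending only on $t$ and $B$.

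It then follows that $\bigcup_{n\ge 1}S_{\partial\varphi_n}(t)(B)\subset L_\lambda$, and the latter is relatively compact by hypothesis, which is exactly the required conclusion. The substantive point — and the only real obstacle — is the uniform-in-$n$ estimate on $\varphi_n(u(t))$: everything hinges on the fact that the derivative bound supplied by Proposition \ref{12042015-1740} carries an absolute constant $\sqrt 2$ independent of the particular subdifferential, so that combining it with the contraction bound $\|u(t)\|\le M$ and the variational inequality (tested at the common point $0$, where all the $\varphi_n$ vanish) produces a single $\lambda$ serving the whole family at once. The density of $D(\varphi_n)$ enters only to guarantee $\overline{D(\partial\varphi_n)}=H$, so that $S_{\partial\varphi_n}$ is indeed defined on all of $B$ and Proposition \ref{12042015-1740} applies.
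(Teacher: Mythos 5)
Your argument is correct, and it follows the same overall strategy as the paper: both reduce the claim to showing that $S_{\partial\varphi_n}(t)(B)$ lies in a single sublevel set $L_\lambda$ with $\lambda$ depending only on $t$ and $B$, which is exactly the content of the paper's Lemma \ref{lem:fi_bdd}. Where you differ is in how that uniform bound on $\varphi_n(S_{\partial\varphi_n}(t)\bar u)$ is produced. The paper works with the energy identity $\frac{\d}{\d t}\varphi_n(u(t))=-\|\dot u(t)\|^2$ from Proposition \ref{08042015-1552}\,(iii), combines it with the same subdifferential inequality $\varphi_n(u(t))\le\|\dot u(t)\|\,\|u(t)\|$ that you use, and integrates the resulting Riccati-type inequality $\frac{\d}{\d t}\varphi_n(u(t))\le -M^{-2}\varphi_n(u(t))^2$ to get $\varphi_n(u(t))\le\max\{1,M^2/t\}$ (the extra $\max$ with $1$ coming from the case distinction $\varphi_n(\bar u)\le 1$). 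You instead import the $n$-independent smoothing estimate $t\|\dot u(t)\|\le\sqrt2\,\|\bar u\|$ directly from Proposition \ref{12042015-1740} with $f\equiv 0$ and plug it into the same subdifferential inequality, obtaining the cleaner bound $\sqrt2\,M^2/t$ with no case distinction and no ODE integration. Both routes are legitimate; yours is shorter because it leans on a stronger quoted result, while the paper's is self-contained modulo the energy identity. Your closing remarks are also the right ones to make: $0\in\partial\varphi_n(0)$ gives the stationarity of $0$ and hence $\|u(s)\|\le\|\bar u\|$ by contractivity, and the density of $D(\varphi_n)$ guarantees $\overline{D(\partial\varphi_n)}=H$ so that Proposition \ref{12042015-1740} is applicable to every $\bar u\in B$.
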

\noindent In order to prove Proposition \ref{23042015-2220} we need the following lemma.
\begin{lemma}\label{lem:fi_bdd} Under the assumptions of Proposition {\em\ref{23042015-2220}}, for all $n\geq 1$, $t>0$ and $\bar u\in H$ we have
\[
\varphi_n(S_{\partial \varphi_n}(t)\bar u)\leq \max\left\{1,\frac {M^2}{t}\right\} \text{ with } M:=\|\bar u\|+\sup_{\bar v\in L_0} \|\bar v\|.
\]
\end{lemma}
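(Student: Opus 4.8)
The plan is to regard $u(s):=S_{\partial\varphi_n}(s)\bar u$ as the gradient flow of $\varphi_n$ and to exploit that $0$ is an equilibrium of this flow: since $\varphi_n\geq0$ and $\varphi_n(0)=0$, the point $0$ minimizes $\varphi_n$, so $0\in\partial\varphi_n(0)$ and $S_{\partial\varphi_n}(s)0=0$ for all $s\geq0$. The map $u(s)$ is well defined for every $\bar u\in H$ because $D(\varphi_n)$ is dense, whence $\overline{D(\partial\varphi_n)}=H$. The desired estimate is the standard smoothing property of such semigroups, obtained by comparing the decay of $\tfrac12\|u(s)\|^2$ with the dissipated energy; in fact I would establish the sharper inequality $\varphi_n(u(t))\leq\|\bar u\|^2/(2t)$, which gives the claim at once since $\|\bar u\|\leq M$ and $\|\bar u\|^2/(2t)\leq\max\{1,M^2/t\}$.

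The argument rests on two facts, both drawn from Proposition \ref{08042015-1552} applied with $f\equiv0$. First, $s\mapsto\varphi_n(u(s))$ is nonincreasing on $(0,t]$: for any $\varepsilon\in(0,t)$ one has $u(\varepsilon)\in D(\varphi_n)$ (instantaneous regularization, Proposition \ref{12042015-1740}), and the semigroup property identifies $s\mapsto u(\varepsilon+s)$ with the solution issued from $u(\varepsilon)\in D(\varphi_n)$, so Proposition \ref{08042015-1552}(iii) yields $\frac{\d}{\d s}\varphi_n(u(s))=-\|\dot u(s)\|^2\leq0$ for a.e.\ $s>\varepsilon$; letting $\varepsilon\to0^+$ gives monotonicity on all of $(0,t]$. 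Second, since $-\dot u(s)\in\partial\varphi_n(u(s))$ for a.e.\ $s$, testing the subdifferential inequality against $w=0$ and using $\varphi_n(0)=0$ gives $\langle\dot u(s),u(s)\rangle\leq-\varphi_n(u(s))$ a.e. By Proposition \ref{08042015-1552}(ii) one has $\dot u\in L^2(\varepsilon,t;H)$, so $g(s):=\tfrac12\|u(s)\|^2$ is absolutely continuous on $[\varepsilon,t]$ with $g'(s)=\langle\dot u(s),u(s)\rangle$, and integrating over $[\varepsilon,t]$ gives $\int_\varepsilon^t\varphi_n(u(s))\,\d s\leq g(\varepsilon)-g(t)\leq\tfrac12\|u(\varepsilon)\|^2$.

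Letting $\varepsilon\to0^+$, continuity of $u$ at $0$ gives $\|u(\varepsilon)\|\to\|\bar u\|$, while $\varphi_n\circ u\in L^1(0,t)$ together with $\varphi_n\geq0$ forces the left-hand side to converge to $\int_0^t\varphi_n(u(s))\,\d s$; hence $\int_0^t\varphi_n(u(s))\,\d s\leq\tfrac12\|\bar u\|^2$. Combining this with the monotonicity, $t\,\varphi_n(u(t))=\int_0^t\varphi_n(u(t))\,\d s\leq\int_0^t\varphi_n(u(s))\,\d s\leq\tfrac12\|\bar u\|^2$, so $\varphi_n(u(t))\leq\|\bar u\|^2/(2t)$. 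The main obstacle, and the only delicate point, is that a general $\bar u\in H$ need not lie in $D(\varphi_n)$, so $\varphi_n(\bar u)$ may equal $+\infty$ and the energy estimate cannot be applied directly at $s=0$; this is exactly why I work on $[\varepsilon,t]$, relying on the instantaneous regularization $u(\varepsilon)\in D(\varphi_n)$, and then pass to the limit $\varepsilon\to0^+$ using the $L^1$-integrability of $\varphi_n\circ u$ up to $0$.
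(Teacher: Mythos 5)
Your argument is correct, but it takes a genuinely different route from the paper's. The paper also starts from the subdifferential inequality tested at $w=0$, but it converts it via Cauchy--Schwarz and contractivity into $\varphi_n(u(t))\leq \|\dot u(t)\|\,\|u(t)\|\leq M\|\dot u(t)\|$, and then combines this with the energy identity $\frac{\d}{\d t}\varphi_n(u(t))=-\|\dot u(t)\|^2$ to obtain the Riccati-type differential inequality $\frac{\d}{\d t}\varphi_n(u(t))\leq -M^{-2}\varphi_n(u(t))^2$, which it integrates (splitting into the cases $\varphi_n(\bar u)\leq 1$ and $\varphi_n(\bar u)>1$, the latter absorbing the possibility $\varphi_n(\bar u)=+\infty$); this is where the unusual bound $\max\{1,M^2/t\}$ and the constant $\sup_{\bar v\in L_0}\|\bar v\|$ come from. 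You instead keep the inequality $\langle\dot u(s),u(s)\rangle\leq-\varphi_n(u(s))$ in integrated form, compare the decay of $\tfrac12\|u(s)\|^2$ with $\int_0^t\varphi_n(u(s))\,\d s$, and use monotonicity of $\varphi_n\circ u$ to conclude $t\,\varphi_n(u(t))\leq\tfrac12\|\bar u\|^2$. This is the classical Br\'ezis smoothing estimate; it yields the strictly sharper bound $\|\bar u\|^2/(2t)$, which does not involve $L_0$ at all and trivially implies the stated one since $\|\bar u\|\leq M$. Your $\varepsilon$-truncation near $s=0$, justified by the instantaneous regularization of Proposition \ref{12042015-1740} and the integrability $\varphi_n\circ u\in L^1(0,t)$ from Proposition \ref{08042015-1552}(ii), is handled carefully and in fact addresses more explicitly than the paper does the point that $\varphi_n(\bar u)$ may be infinite for a general $\bar u\in H$. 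Either proof suffices for the compactness argument in Proposition \ref{23042015-2220}, since both produce a sublevel set $L_\lambda$ with $\lambda$ independent of $n$.
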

 \begin{proof} Fix $n\geq 1$, $\bar u\in H$. Put $u:=S_{\partial \varphi_n}(\cdot)(\bar u)$. By use of Proposition \ref{08042015-1552} (iii) we obtain
\begin{equation}\label{t1}
\frac{\d}{\d t}\left(\varphi_n (u(t))\right)=-\left\|\dot u (t)\right\|^2\leq 0 \ \mbox{ for a.e. } t>0,
\end{equation}
which implies in particular that $\varphi_n (u(t)) \leq \varphi_n (\bar u)$ for all $t\geq 0$. From the fact that,
 $-\dot u(t)\in \partial\varphi_n(u(t))$ and $\varphi_n(0)=0$ it follows that $\varphi_n(u(t)) \leq \| \dot u (t) \| \|u(t)\|$.
Using the contractivity of $\{ S_{\partial \varphi_n} (t) \}_{t\geq 0}$ one has, for a.e. $t>0$,
\begin{equation}\label{t2}
\varphi_n(u(t)) \leq \| \dot u (t) \| \|u(t)\| \leq \|\dot u (t) \|  \left( \| S_{\partial \varphi_n}(t)0 \| + \|\bar u\| \right)
\leq M \| \dot u (t) \|.
\end{equation}
Combining \eqref{t1} and \eqref{t2} we obtain
\begin{equation}\label{27072015-1156}
\frac{\d}{\d t}\left(\varphi_n (u(t))\right) \leq -\frac{1}{M^2} \left( \varphi_n (u(t))\right)^2 \mbox{ for a.e. } t>0.
\end{equation}
If $\varphi_n(\bar u)\leq 1$, then $\varphi_n (u(t)) \leq \varphi_n (\bar u) \leq 1$ for all $t>0$. If $\varphi_n (\bar u) >1$, then
$$
\frac{\d}{\d t} \left( \frac{1}{\varphi_n(u(t))} \right)\geq \frac{1}{M^2}
$$
for a.e. $t>0$ as long as $\varphi(u_n(t))>0$. This implies
$$
\frac{1}{\varphi_n(u(t))}\geq \frac{1}{\varphi_n(\bar u)} + \frac{t}{M^2} > \frac{t}{M^2},
$$
which ends the proof.
\end{proof}
\begin{proof}[Proof of Proposition \ref{23042015-2220}.] Fix $t>0$ and let $B\subset L^2(0,l)$ be bounded. From Lemma \ref{lem:fi_bdd} we obtain $S_{\partial \varphi_n}(t)(B)\subset L_\lambda$ for some $\lambda$ independent of $n$. The conclusion follows from the relative compactness of $L_\lambda$.
\end{proof}

\section{Existence and regularity for $p$-Laplace evolution equation}

First we put the equation (\ref{07042015-1828}) in the abstract framework to precise the concept of solution and get information on their regularity. To this end, define
$$
D({\mathbf A}_p):=\{ u\in C_0 (0,l)\cap W_{loc}^{1,p-1} (0,l) \mid (|u'|^{p-2}u')' \in C_0(0,l)\},
$$
$$
{\mathbf A}_p u:= -(|u'|^{p-2}u')',\ \  u\in D({\mathbf A}_p).
$$
Observe that if $u\in D(\mathbf{A}_p)$, then $u\in C^1(0,l)$ and $|u'|^{p-2}u'\in C^1(0,l)$.
\begin{lemma}
The operator ${\mathbf A}_p$  is $m$-accretive.
\end{lemma}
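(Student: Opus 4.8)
The plan is to verify the two defining properties of an $m$-accretive operator in $X=C_0(0,l)$ with the maximum norm. First, \emph{accretivity}: $\|u_1-u_2\|_\infty \le \|u_1-u_2+\lambda(\mathbf{A}_p u_1-\mathbf{A}_p u_2)\|_\infty$ for all $(u_i,\mathbf{A}_p u_i)\in\Gr(\mathbf{A}_p)$ and $\lambda>0$. Second, the \emph{range condition} $R(I+\lambda\mathbf{A}_p)=C_0(0,l)$ for every $\lambda>0$, for which it suffices to solve $u+\lambda\mathbf{A}_p u=h$ for each $h\in C_0(0,l)$. Throughout I write $\phi_p(s):=|s|^{p-2}s$, recording that $\phi_p$ is an increasing homeomorphism of $\R$ with strictly increasing inverse $\phi_p^{-1}$, and that $u\in D(\mathbf{A}_p)$ entails $\phi_p(u')\in C^1(0,l)$.

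For accretivity I would exploit the maximum-principle structure of the sup norm. Put $w:=u_1-u_2\in C_0(0,l)$ and $z:=\mathbf{A}_p u_1-\mathbf{A}_p u_2=-(\phi_p(u_1')-\phi_p(u_2'))'$; set $g:=\phi_p(u_1')-\phi_p(u_2')\in C^1(0,l)$, so $z=-g'$. If $w\equiv 0$ there is nothing to prove; otherwise, after possibly swapping $u_1$ and $u_2$, the value $\|w\|_\infty=w(x_0)>0$ is attained at some interior $x_0\in(0,l)$, since $w$ vanishes at the endpoints. The key observation is that $\operatorname{sign} w'(x)=\operatorname{sign} g(x)$ at every $x$, because $u_i'=\phi_p^{-1}(\phi_p(u_i'))$ and $\phi_p^{-1}$ is strictly increasing. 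At the maximum, $w'(x_0)=0$ gives $u_1'(x_0)=u_2'(x_0)$, hence $g(x_0)=0$. If we had $g'(x_0)>0$, then $g$ would be negative just left of $x_0$, forcing $w'<0$ there and thus $w(x)>w(x_0)$ for $x$ slightly below $x_0$, contradicting maximality. Therefore $g'(x_0)\le 0$, so $z(x_0)=-g'(x_0)\ge 0$, and
$$\|w+\lambda z\|_\infty \ge (w+\lambda z)(x_0)\ge w(x_0)=\|w\|_\infty,$$
which is precisely the accretivity inequality.

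For the range condition I would solve $u-\lambda(\phi_p(u'))'=h$, $u(0)=u(l)=0$, variationally: this is the Euler--Lagrange equation of
$$J(u):=\frac{\lambda}{p}\int_0^l |u'|^p\,\d x+\frac12\int_0^l u^2\,\d x-\int_0^l h\,u\,\d x$$
on $W_0^{1,p}(0,l)$, which is strictly convex, coercive (the term $\tfrac{\lambda}{p}\|u'\|_{L^p}^p$ dominates, as $p>2$) and weakly lower semicontinuous, hence admits a unique minimizer $u$. As a weak solution, $\phi_p(u')$ has distributional derivative $(u-h)/\lambda$, and a short bootstrap places $u$ in $D(\mathbf{A}_p)$: by the one-dimensional Sobolev embedding $u\in C([0,l])$ with $u(0)=u(l)=0$, so $(u-h)/\lambda\in C_0(0,l)$; hence $\phi_p(u')\in C^1(0,l)$, and applying the continuous $\phi_p^{-1}$ yields $u'\in C(0,l)$, i.e. $u\in C^1(0,l)$, with $(\phi_p(u'))'=(u-h)/\lambda\in C_0(0,l)$. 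Thus $u\in D(\mathbf{A}_p)$ and $u+\lambda\mathbf{A}_p u=h$.

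I expect the accretivity step to be the main obstacle, since it is where the nonlinearity and the nonsmoothness of the maximum norm interact: one must confirm that the extremum of $w$ is interior and that the sign relation between $w'$ and $g$ forces $g'(x_0)\le 0$, including at points where $u_1'(x_0)=u_2'(x_0)=0$ and $\phi_p$ degenerates (the argument uses only monotonicity of $\phi_p^{-1}$, not its differentiability). The surjectivity step is comparatively routine, the only delicate point being the passage from the weak $W_0^{1,p}$-solution to genuine membership in $D(\mathbf{A}_p)$ through the regularity bootstrap above.
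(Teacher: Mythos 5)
Your proof is correct. The paper itself does not prove this lemma in-line; it only points to \cite[Lemma 6.1]{cw-kr}, so you have supplied a genuine self-contained argument where the authors defer to a reference. Your two steps are the standard ones and both are sound: the accretivity step is the sup-norm maximum-principle argument (at an interior maximizer $x_0$ of $w=u_1-u_2$ one has $w'(x_0)=0$, hence $g(x_0)=0$ where $g=\phi_p(u_1')-\phi_p(u_2')$, and the pointwise sign identity $\operatorname{sign}w'=\operatorname{sign}g$ --- which, as you correctly stress, uses only strict monotonicity of $\phi_p$, so it survives the degeneracy at $u_1'(x_0)=u_2'(x_0)=0$ --- forces $g'(x_0)\le 0$ and thus $(w+\lambda z)(x_0)\ge w(x_0)$); and the range step solves $u+\lambda\mathbf{A}_pu=h$ by minimizing the strictly convex, coercive functional $J$ on $W_0^{1,p}(0,l)$ and then bootstrapping regularity through the one-dimensional embedding $W_0^{1,p}(0,l)\hookrightarrow C_0(0,l)$ and the continuity of $\phi_p^{-1}$, which does land the minimizer in $D(\mathbf{A}_p)$ as defined in the paper (note the paper's own observation that $u\in D(\mathbf{A}_p)$ implies $\phi_p(u')\in C^1(0,l)$, which licenses your use of $g\in C^1$ and $w\in C^1$ on the open interval). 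An alternative, more elementary route to surjectivity in this one-dimensional setting is to integrate the ODE explicitly (invert $\phi_p$ and use a shooting/monotonicity argument, as in del Pino--Elgueta--Man\'asevich); your variational argument buys generality (it would survive in higher dimensions), at the price of the small regularity bootstrap you carry out at the end.
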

\begin{proof} It can be demonstrated along the lines of \cite[Lemma 6.1]{cw-kr}.  \end{proof}
\begin{definition}\label{15042015-0945}
Fix $\bar u\in C_0(0,l)$ and $T>0$. By a \emph{solution} of \eqref{07042015-1828} on the interval $[0, T]$, with the initial condition $u(x,0)=\bar u(x)$ for $x\in [0,l]$, we mean the integral solution of
\begin{equation}\label{08042015-0337}
\dot u(t) = - {\mathbf A}_p u(t) + {\mathbf F} (u(t)),\  t\in [0,T],
\end{equation}
with $u(0)=\bar u$, where ${\mathbf F}\colon C_0(0,l)\to C_0 (0,l)$ is given by ${\mathbf F}(u)(x):=f(x,u(x))$, $x\in [0,l]$, $u\in C_0 (0,l)$.
\end{definition}
\begin{theorem}\label{29072015-1333}
Assume that a continuous $f\colon [0,l]\times \R\to\R$ satisfies the local Lipschitz condition {\em (\ref{0128-29042015})} and that $f(x,0)=0$ for all $x\in [0,l]$. Then, for any $\bar u\in C_0 (0,l)$ there exists a unique function $u\in C([0,T_{\bar u}^{(p,f)}), C_0 (0,l))$ with $T_{\bar u}^{(p,f)} \in (0,+\infty]$ such that $u(0)=\bar u$, the function $u|_{[0,T]}$  is the solution of {\em (\ref{07042015-1828})}, for all $T\in (0, T_{\bar u}^{(p,f)}]$, and either $T_{\bar u}^{(p,f)}=+\infty$ or $T_{\bar u}^{(p,f)}<+\infty$ and
$\limsup_{t\to T_{\bar u}^{(p,f)-}} \|u(t)\|_{\infty}=+\infty$.
\end{theorem}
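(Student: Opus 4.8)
The plan is to read the statement as a direct instance of the abstract existence-and-uniqueness theory of Section~2, applied to the concrete equation~\eqref{08042015-0337}. The operator $\mathbf A_p$ is $m$-accretive in $X=C_0(0,l)$ by the preceding lemma, so Proposition~\ref{0208-29042015}(i) is the right engine: it already yields a maximal integral solution with exactly the blow-up dichotomy asserted here. Consequently the only genuine work is to verify that the data of \eqref{08042015-0337} meet the standing hypotheses of that proposition, namely (a) that the Nemytskii map $\mathbf F$ sends $C_0(0,l)$ into itself and is locally Lipschitz there, and (b) that $\overline{D(\mathbf A_p)}=C_0(0,l)$, so that an arbitrary $\bar u\in C_0(0,l)$ is an admissible initial datum.

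For (a), fix $u\in C_0(0,l)$. The function $x\mapsto f(x,u(x))$ is continuous as a composition of continuous maps, and since $u(0)=u(l)=0$ together with the assumption $f(x,0)=0$ give $f(0,u(0))=f(l,u(l))=0$, we conclude $\mathbf F(u)\in C_0(0,l)$. For local Lipschitz continuity, fix $R>0$ and take $u,v\in C_0(0,l)$ with $\|u\|_\infty,\|v\|_\infty\leq R$; then $u(x),v(x)\in[-R,R]$ for every $x$, so the hypothesis~\eqref{0128-29042015} supplies an $L>0$ with $|f(x,u(x))-f(x,v(x))|\leq L|u(x)-v(x)|\leq L\|u-v\|_\infty$, and taking the maximum over $x\in[0,l]$ gives $\|\mathbf F(u)-\mathbf F(v)\|_\infty\leq L\|u-v\|_\infty$. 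For (b), I would observe that $C_c^\infty(0,l)\subset D(\mathbf A_p)$: for such $u$ the map $s\mapsto|s|^{p-2}s$ is $C^1$ since $p>2$, so $|u'|^{p-2}u'$ is $C^1$ with compactly supported derivative $(p-1)|u'|^{p-2}u''\in C_0(0,l)$; as $C_c^\infty(0,l)$ is dense in $C_0(0,l)$ in the max norm, density of the domain follows.

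With (a) and (b) established, Proposition~\ref{0208-29042015}(i), applied with $A=\mathbf A_p$ and $F=\mathbf F$, provides for each $\bar u\in C_0(0,l)=\overline{D(\mathbf A_p)}$ a maximal integral solution $u\colon[0,T_{\bar u}^{(p,f)})\to C_0(0,l)$ of \eqref{08042015-0337} with $u(0)=\bar u$ and with the alternative $T_{\bar u}^{(p,f)}=+\infty$ or $\limsup_{t\to T_{\bar u}^{(p,f)-}}\|u(t)\|_\infty=+\infty$; here the blow-up norms coincide because the norm of $X$ is the max norm. Uniqueness of the integral solution on each compact subinterval follows from the contraction estimate in Remark~\ref{07042015-1652}(iii) combined with Gronwall's inequality, exactly the argument underlying the proof of Proposition~\ref{0208-29042015}. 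Finally, by Definition~\ref{15042015-0945} the integral solution of \eqref{08042015-0337} is by definition the solution of \eqref{07042015-1828}, so the statement is proved. The only point demanding real care — and the sole obstacle — is the mapping property of $\mathbf F$ into $C_0(0,l)$, where the assumption $f(x,0)=0$ is precisely what forces the boundary values of $\mathbf F(u)$ to vanish; every other step is a transcription of the abstract framework.
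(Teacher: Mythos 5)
Your proposal is correct and follows exactly the paper's route: the paper's own proof is the single sentence that the theorem ``follows directly from Proposition \ref{0208-29042015} applied to (\ref{08042015-0337})'', and your argument simply fills in the routine verifications (that $\mathbf F$ maps $C_0(0,l)$ into itself using $f(x,0)=0$, that it is locally Lipschitz via \eqref{0128-29042015}, and that $\overline{D(\mathbf A_p)}=C_0(0,l)$) which the paper leaves implicit. These checks are all accurate, so nothing further is needed.
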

\begin{proof} The proof follows directly from Proposition \ref{0208-29042015} applied to (\ref{08042015-0337}). \end{proof}

So as to study regularity properties of solutions, it will be convenient to use also an $L^2$-extension of ${\mathbf A}_p$. We shall consider $\bar{\mathbf A}_p\colon D(\bar{\mathbf A}_p) \to L^2(0,l)$ given by
$$D(\bar{\mathbf A}_p):=\{u\in W_{0}^{1,p} (0,l) \mid (|u'|^{p-2}u')' \in L^2(0,l)\} \text{ and }
\bar{\mathbf A}_p u:= -(|u'|^{p-2}u')'.
$$
 Clearly, $D(\bar{\mathbf A}_p)\subset C^1(0,l)$ and $\Gr({\mathbf A}_p) \subset \Gr (\bar{\mathbf A}_p)$ (by the embedding of $C_0(0,l)$ into $L^2(0,l)$).
\begin{lemma} \label{08042015-1551} $\mbox{ }$\\
{\em (i)} \parbox[t]{144mm}{The operator $\bar{\mathbf A}_p$ is $m$-accretive and there exists $c_p>0$ such that
\begin{equation}\label{15042015-1337}
(\bar {\mathbf A}_p u- \bar {\mathbf A}_p v, u-v)_{L^2} \geq c_p \|u-v\|_{W_0^{1,p}}^{p} \mbox{ for all } u,v\in D(\bar {\mathbf A}_p).
\end{equation}}\\[1em]
{\em (ii)} \parbox[t]{144mm}{$\bar{\mathbf A}_p= \partial\varphi_p$,   where $\partial\varphi_p$ is the subdifferential of the lower semicontinuous convex functional $\varphi_p\colon  L^2(0,l)\to \R\cup \{ + \infty \}$
given by
$$
\varphi_p (u)= \left\{
\begin{array}{ll}
\frac{1}{p} \int_{0}^{l} |u'(x)|^p \d x &   \mbox{ if } u\in W_{0}^{1,p}(0,l)\\
 +\infty &  \mbox{ if } u\in L^2(0,l)\setminus W_{0}^{1,p}(0,l).
 \end{array}\right.
 $$}\\[1em]
{\em (iii)} \parbox[t]{144mm}{For any $M,T>0$ there exists $C=C(M,T)>0$  (independent of $p\geq 2$)  such that $\|S_{\bar {\mathbf A}_p}(t)\bar u_1- S_{\bar{\mathbf A}_p}(t)\bar u_0\|_{W^{1,p}_0}\leq C(T,M)\cdot  \|\bar u_1-\bar u_0\|_{L^2}^{1/p}$ whenever $\|\bar u_0\|_{L^2},\|\bar u_1\|_{L^2}\leq M$ and $t\geq T$.}
\end{lemma}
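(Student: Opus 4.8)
The plan is to exploit the strong monotonicity estimate (\ref{15042015-1337}) together with the smoothing (time-derivative) bound of Proposition \ref{12042015-1740}, observing that the exponent $1/p$ on the right-hand side is exactly what one recovers from an inequality of the shape $\|w\|_{W_0^{1,p}}^p \le (\text{const})\cdot\|\bar u_1-\bar u_0\|_{L^2}$. I would write $u_i(t):=S_{\bar{\mathbf A}_p}(t)\bar u_i$ for $i=0,1$ and $w(t):=u_1(t)-u_0(t)$; by Lemma \ref{08042015-1551}(ii) and Proposition \ref{08042015-1552} both $u_i$ are the strong solutions of the gradient flow $\dot u=-\bar{\mathbf A}_p u=-\partial\varphi_p(u)$.

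First I would fix $t\ge T$ and gather the a priori information at that instant. By Proposition \ref{12042015-1740} applied with $f\equiv0$ (legitimate since $\overline{D(\partial\varphi_p)}=L^2(0,l)$), for every $t>0$ one has $u_i(t)\in D(\bar{\mathbf A}_p)$, the right derivative $\dot u_{i,+}(t)$ exists and equals $-\bar{\mathbf A}_p u_i(t)$ (the subdifferential $\partial\varphi_p$ being single valued on its domain), and moreover
\[
\|\bar{\mathbf A}_p u_i(t)\|_{L^2}=\|\dot u_{i,+}(t)\|_{L^2}\le \frac{\sqrt2\,\|\bar u_i\|_{L^2}}{t}\le \frac{\sqrt2\,M}{T}.
\]
Using these pointwise (not merely almost everywhere) identities I would apply (\ref{15042015-1337}) to $u_1(t),u_0(t)$, then Cauchy--Schwarz and the $L^2$-contractivity of the semigroup from Remark \ref{07042015-1652}(iv), namely $\|w(t)\|_{L^2}\le\|\bar u_1-\bar u_0\|_{L^2}$, to get
\[
c_p\,\|w(t)\|_{W_0^{1,p}}^p\le\big(\bar{\mathbf A}_p u_1(t)-\bar{\mathbf A}_p u_0(t),\,w(t)\big)_{L^2}\le\frac{2\sqrt2\,M}{T}\,\|\bar u_1-\bar u_0\|_{L^2}.
\]
Taking $p$-th roots yields the claimed inequality with $C(T,M)=\big(2\sqrt2\,M/(c_pT)\big)^{1/p}$. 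Working with the right derivative $\dot u_{i,+}$, which exists for every $t>0$, is what makes the bound valid for all $t\ge T$ rather than almost everywhere, sidestepping any need for continuity of $t\mapsto\|w(t)\|_{W_0^{1,p}}$.

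The main obstacle, and the only genuinely delicate point, is that the constant must be independent of $p$, whereas a literal reading of (\ref{15042015-1337}) only guarantees the existence of \emph{some} $c_p>0$. I would therefore make the constant in (\ref{15042015-1337}) explicit by tracking the elementary vector inequality $(|a|^{p-2}a-|b|^{p-2}b)\cdot(a-b)\ge 2^{2-p}|a-b|^p$ ($p\ge2$) underlying its proof, giving $c_p=2^{2-p}$ and hence $c_p^{-1/p}=2^{1-2/p}\le2$. Combined with $(2\sqrt2\,M/T)^{1/p}\le\max\{1,(2\sqrt2\,M/T)^{1/2}\}$ for every $p\ge2$, this shows that $C(T,M):=2\max\{1,(2\sqrt2\,M/T)^{1/2}\}$ works uniformly in $p$, completing the argument.
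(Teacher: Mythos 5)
Your argument for part (iii) is correct and is essentially the paper's own proof: the authors likewise combine the bound $t\|\dot u(t)\|_{L^2}\le\sqrt 2\,\|\bar u\|_{L^2}$ from Proposition \ref{12042015-1740} (with $f\equiv 0$) with the monotonicity estimate (\ref{15042015-1337}), Cauchy--Schwarz and the $L^2$-contractivity of the semigroup, and then note $c_p=2^{2-p}\geq 2^{-p}$ so that $c_p^{-1/p}\leq 2$ uniformly in $p$. Parts (i) and (ii), which you take as inputs, are disposed of in the paper by a citation to \cite{cw-mac} and a short maximal-monotonicity argument, so your reliance on them is consistent with the paper's treatment.
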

 \begin{proof} (i) If we change the space $L^p(0,l)$ with $L^2(0,l)$ in the proof of \cite[Prop. 4.1]{cw-mac}, we obtain that the operator $\bar {\mathbf A}_p$ is maximal monotone, which in Hilbert spaces is equivalent to being $m$-accretive. In particular, we have  the estimate (\ref{15042015-1337}).\\
\indent (ii)  Take $(u,v)\in \mathrm{Gr}\, \bar{\mathbf A}_p$, which means that $u\in W^{1,p}_0(0,l)$ and $v=-(j'(u'))'$, where $j(s)=\frac 1p|s|^p$. Therefore, for all $w\in W^{1,p}_0(0,l)$ we have
\[(v,w-u)_{L^2}=\int_0^lv(w-u)=\int_0^l j'(u')(w'-u')\leq \int_0^lj(w')-\int_0^lj(u')=\varphi_p(w)-\varphi_p(u),\] which shows that $(u,v)\in\mathrm{Gr}(\partial\varphi_p)$. In other words $\Gr (\bar {\mathbf A}_p)\subset \Gr(\partial \varphi_p)$. From the general theory of maximal operators, it follows that, since $\bar {\mathbf A}_p$ is maximal monotone in a Hilbert space, its graph is maximal among graphs of accretive operators. Therefore we get $\Gr(\partial \varphi_p)=\Gr (\bar {\mathbf A}_p)$.\\
\indent (iii) Let $v_i:= S_{\bar{\mathbf A}_p}(\cdot)\bar u_i$ for $i=0,1$ and let $w=v_1-v_0$. From Proposition \ref{12042015-1740} it can be concluded that $w(s)\in W^{1,p}_0(0,l)$ and that $\|\dot w(s)\|_{L^2}\leq \sqrt 2 s^{-1}\cdot(\|\bar u_0\|_{L^2}+\|\bar u_1\|_{L^2})$ for all $s>0$. From (i) it follows that
\begin{align*} c_p\|w(t)\|_{W^{1,p}_0}^p\leq \|\dot w(t)\|_{L^2}\cdot \|w(t)\|_{L^2}\leq \frac{2\sqrt 2M}{t}
\|\bar u_1-\bar u_0\|_{L^2},
\end{align*}
which gives the conclusion  as $c_p=2^{2-p}\geq 2^{-p}$.
\end{proof}
\noindent The following result sheds more light on the regularity of solutions.
\begin{theorem}\label{06052015-1149}
If $u\in C([0,T], C_0(0,l))$ is a solution of  {\em (\ref{07042015-1828})}, then
\[u\in C((0,T], W_{0}^{1,p}(0,l)) \cap W^{1,2}((0,T];L^2(0,l)),\]
\[ u(t)\in D(\bar{\mathbf A}_p)\text{ and }
\dot u(t) = - \bar {\mathbf A}_p u(t) + \bar {\mathbf F} (u (t)) , \mbox{ for a.e. } t\in [0,T],
\]
where $\bar {\mathbf F}\colon  C_0( 0,l) \to L^2 (0,l)$ is given by $\bar {\mathbf F}(u)(x):= f(x,u(x))$ for a.e. $x\in (0,l)$ and
\[W^{1,2}((0,T];L^2(0,l))=\left\{u\in L^2\left(0,T;L^2(0,l)\right)\mid\dot u\in L^2_{loc}\left((0,T];L^2(0,l)\right) \right\}.\]
Moreover, the functional $\varphi_{p,f}\colon W^{1,p}_0(0,l)\to\mathbb{R}$ given by
\[ \varphi_{p,f}(u):=\frac 1p\int_{0}^{l} |u'(x)|^p \d x - \int_{0}^{l} {\cal F} (x,u(x)) \d x, \  u\in W^{1,p}_0(0,l),
\]
where ${\cal F} (x,u):= \int_{0}^{u} f(x,\tau) \d \tau$,  is a Lyapunov function for {\em (\ref{07042015-1828})}, since for any solution $u\in C([0,T], C_0(0,l))$ and $0<s<t<T$ one has
\begin{equation}\label{15052015-1726}
{\mathbf \varphi}_{p,f}  (u(t))-{\mathbf \varphi}_{p,f}(u(s)) = - \int_{s}^{t} \int_{0}^{l} |\dot u (\tau)|^2 \d x \d\tau.
\end{equation}
\end{theorem}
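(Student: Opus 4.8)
The plan is to establish Theorem \ref{06052015-1149} by identifying the integral solution of (\ref{08042015-0337}) in $C_0(0,l)$ with a strong solution of the evolution equation governed by $\bar{\mathbf A}_p = \partial\varphi_p$ in the larger space $L^2(0,l)$, and then to read off regularity and the Lyapunov identity from the Hilbert-space theory already assembled. First I would invoke Remark \ref{07042015-1652} (ii): since $C_0(0,l)$ embeds continuously into $L^2(0,l)$ and ${\mathbf A}_p$ extends to the $m$-accretive operator $\bar{\mathbf A}_p$ in $L^2(0,l)$ (Lemma \ref{08042015-1551} (i)), the $C_0$-integral solution $u$ of $\dot u = -{\mathbf A}_p u + f(t)$ with forcing $f(t):={\mathbf F}(u(t))$ is also an $L^2$-integral solution of $\dot u = -\bar{\mathbf A}_p u + \bar{\mathbf F}(u(t))$. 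The forcing $t\mapsto \bar{\mathbf F}(u(t))$ lies in $L^2(0,T;L^2(0,l))$ (indeed it is continuous into $L^2$, being the composition of the continuous curve $u$ in $C_0(0,l)$ with the bounded-on-bounded-sets map $\bar{\mathbf F}$, using $f(x,0)=0$ and local Lipschitzness).

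With this identification, the regularity assertions follow directly from Proposition \ref{08042015-1552} applied to $\varphi=\varphi_p$: part (i) gives $u(t)\in D(\partial\varphi_p)=D(\bar{\mathbf A}_p)$ and $\dot u(t) = -\bar{\mathbf A}_p u(t) + \bar{\mathbf F}(u(t))$ for a.e.\ $t$; part (ii) gives $t^{1/2}\dot u\in L^2(0,T;L^2)$ and $\varphi_p\circ u\in L^1(0,T)$, which yields $\dot u\in L^2_{loc}((0,T];L^2)$, hence $u\in W^{1,2}((0,T];L^2(0,l))$ in the sense defined. Since $\varphi_p(u(t))<+\infty$ for $t>0$ means exactly $u(t)\in W^{1,p}_0(0,l)$, and the continuity $u\in C((0,T],W^{1,p}_0(0,l))$ I would obtain from the continuity of $\varphi_p\circ u$ on $(0,T]$ (last line of Proposition \ref{08042015-1552}) together with the strict convexity estimate (\ref{15042015-1337}): convergence of norms plus weak convergence in the uniformly convex space $W^{1,p}_0$, or directly the coercive estimate, upgrades to norm convergence.

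For the Lyapunov identity (\ref{15052015-1726}) I would first work on a subinterval $[s,t]\subset(0,T)$ where, by the above, $u(s)\in D(\varphi_p)\subset W^{1,p}_0(0,l)$, so Proposition \ref{08042015-1552} (iii) applies with initial datum $u(s)$ and forcing $g(\tau):=\bar{\mathbf F}(u(\tau))$. That gives $\tfrac{d}{d\tau}\varphi_p(u(\tau)) = -\|\dot u(\tau)\|^2 + (\dot u(\tau),g(\tau))$ for a.e.\ $\tau$. The point is to rewrite the right-hand side in terms of $\varphi_{p,f}$: since $a.e.$ $\dot u(\tau) = -\bar{\mathbf A}_p u(\tau) + g(\tau)$, one has $(\dot u(\tau),g(\tau)) - \|\dot u(\tau)\|^2 = -(\dot u(\tau), \bar{\mathbf A}_p u(\tau) - g(\tau)) - \|\dot u\|^2 + \ldots$; more cleanly, I would observe that $\tfrac{d}{d\tau}\int_0^l{\cal F}(x,u(\tau,x))\,dx = (g(\tau),\dot u(\tau))$ by the chain rule (with $\partial_u{\cal F}=f$), so that
\begin{equation}
\frac{d}{d\tau}\varphi_{p,f}(u(\tau)) = \frac{d}{d\tau}\varphi_p(u(\tau)) - (g(\tau),\dot u(\tau)) = -\|\dot u(\tau)\|^2,
\end{equation}
and integrating over $[s,t]$ yields (\ref{15052015-1726}). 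Justifying the chain rule for $\tau\mapsto\int_0^l{\cal F}(x,u(\tau,x))\,dx$ is the main technical obstacle: it requires that this map be absolutely continuous with the claimed a.e.\ derivative, which I would secure from the absolute continuity of $\varphi_p\circ u$ on $[s,T]$, the bound $\dot u\in L^2((s,T);L^2)$, and the Lipschitz continuity of $f$ in $u$ on the bounded range of $u$, passing to the limit $s\to 0^+$ at the end if the identity is wanted up to the open endpoint. The remaining steps—measurability and dominated-convergence bookkeeping for differentiating under the integral sign—are routine given the uniform bounds on $u$ and the local Lipschitz hypothesis on $f$.
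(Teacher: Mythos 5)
Your proposal is correct and follows essentially the same route as the paper: embed the $C_0(0,l)$-integral solution into $L^2(0,l)$ via Remark \ref{07042015-1652}, identify $\bar{\mathbf A}_p=\partial\varphi_p$ and apply Proposition \ref{08042015-1552} for the regularity and the energy identity, upgrade to $W^{1,p}_0$-continuity by uniform convexity, and obtain the Lyapunov identity by differentiating $t\mapsto\int_0^l{\cal F}(x,u(t)(x))\,\mathrm{d}x$ with a dominated-convergence chain-rule argument. The "technical obstacle" you flag is handled in the paper exactly as you anticipate, by extracting an a.e.-convergent, dominated subsequence of difference quotients.
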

\begin{proof} By Remark \ref{07042015-1652}, the function $u$ may be viewed as an element of $C([0,T], L^2(0,l))$ and the integral solution (in $L^2(0,l)$) of
$$
\dot u(t) = - \bar {\mathbf A}_p u(t) + {\mathbf f}(t), \, t\in [0,T],
$$
with ${\mathbf f}\in L^2(0,T; L^2(0,l))$ given by ${\mathbf f}(t):={\mathbf F}(u(t))$, $t\in [0,T]$.
This together with Lemma \ref{08042015-1551} (ii) and Proposition \ref{08042015-1552},
gives that $u(t)\in D(\partial {\mathbf \varphi}_p ) \subset D({\mathbf\varphi}_p)= W_{0}^{1,p}(0,l)$ and
$$
\dot u(t) = -\partial {\mathbf \varphi_p} (u(t))+{\mathbf f}(u(t)),
$$
for a.e. $t\in [0,T]$, and that $\varphi_p\circ u$ is continuous on $(0,T]$. Therefore, the function  $(0,T] \ni t \mapsto \|u(t)\|_{W^{1,p}}$ is continuous. Since $u\in C([0,T],L^2(0,l))$, using the uniform convexity of $W_{0}^{1,p}(0,l)$ we obtain that $u\colon (0,T]\to W_{0}^{1,p}(0,l)$ is continuous.\\
\indent In order to verify that $\varphi_{p,f}$ is a Lyapunov function observe that, by Proposition \ref{08042015-1552}, one has  $u\in W^{1,2}((0,T];L^2(0,l))$ and
\begin{equation}\label{29072015-1354}
\frac{\d}{\d t}\left( \varphi_p(u(t))\right) - (\bar {\mathbf F}(u(t)), \dot u(t))_{L^2} = -\|\dot u(t)\|_{L^2}^{2}\ \text { for a.e. } t\in [0,T].
\end{equation}
Now take any $t\in [0,T]$ such that $\dot u(t)$ exists (in $L^2(0,l)$) and any sequence $(h_n)$ in $\R\setminus \{ 0 \}$ with $h_n\to 0$.
Passing to a subsequence, if necessary, we may suppose that $(u(t+h_n)-u(t))/h_n \to \dot u(t)$ a.e. on $[0,l]$ and that
there is $g\in L^1(0,l)$ such that $|(u(t+h_n)-u(t))/h_n|\leq g$ a.e. on $[0,l]$. By means of  Lebesgue's dominated convergence theorem we have
\[({\cal F}(x,u(t+h_n)(x))-{\cal F}(x,u(t)(x)))/h_n \to f(x,u(t)(x)) \dot u(t)(x)\text{  a.e. on }[0,l].\]
Furthermore, since ${\cal F}$ is Lipschitz with respect to the second variable on bounded sets, for  $f$ is bounded on bounded sets, we can use dominated convergence theorem to get
$$
\frac{\d}{\d t}\left( \int_{0}^{l} {\cal F}(x,u(t)(x)) \d x \right) =   \int_{0}^{l} f(x,u(t)(x))  \dot u(t)(x)\d x = (\bar {\mathbf F}(u(t)), \dot u(t))_{L^2}.
$$
This together with (\ref{29072015-1354}) ends the proof.
\end{proof}
We shall also need some compactness and continuity of solutions with respect to the initial data.
\begin{theorem}\label{1502-29072015}
Suppose that $u_n\colon [0,T] \to C_0(0,l)$ with $u_n(0)=\bar u_n$, $n\geq 0$, are solutions of {\em (\ref{07042015-1828})} and there is $R>0$ such that $\|u_n(t)\|_\infty \leq R$ for all $t\in [0,T]$ and $n\geq 0$.\\
{\em (i)} \parbox[t]{144mm}{{\em (Continuity)} If $\bar u_n \to \bar u_0$ in $C_0 (0,l)$, then $u_n\to u_0$ in $C([0,T], C_0(0,l))$. If, additionally,  $(\bar u_n)$ is bounded in $W_0^{1,p}(0,l)$, then $u_n(t) \to u_0(t)$ in $W_{0}^{1,p}(0,l)$ for any $t\in (0,T]$.}\\[1em]
{\em (ii)} \parbox[t]{144mm}{{\em (Compactness)} The set $\{ u_n(t)\}_{n\geq 1}$ is relatively compact in $C_0(0,l)$ for any $t\in (0,T]$.}
\end{theorem}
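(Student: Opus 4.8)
The plan is to prove the two assertions separately, obtaining the $C([0,T],C_0(0,l))$-continuity and the $C_0(0,l)$-compactness directly from the abstract results of Section~2, and then upgrading the continuity to $W_0^{1,p}(0,l)$ by combining the strong monotonicity of $\bar{\mathbf A}_p$ with the Lyapunov structure from Theorem~\ref{06052015-1149}. For the first assertion of (i) I would apply Remark~\ref{07042015-1652}(iii) with $A=\mathbf A_p$ and forcings $\mathbf F\circ u_n$, $\mathbf F\circ u_0$; since $\|u_n(t)\|_\infty\le R$, the condition (\ref{0128-29042015}) gives a common Lipschitz constant $L_R$ for $\mathbf F$ on $\overline B(0,R)$, so
\[
\|u_n(t)-u_0(t)\|_\infty\le\|\bar u_n-\bar u_0\|_\infty+L_R\int_0^t\|u_n(s)-u_0(s)\|_\infty\d s,
\]
and Gronwall's inequality yields $\sup_{[0,T]}\|u_n-u_0\|_\infty\le\|\bar u_n-\bar u_0\|_\infty\,e^{L_RT}\to0$. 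Equivalently one may invoke Proposition~\ref{0208-29042015}(ii) with the constant families $A_n\equiv\mathbf A_p$, $F_n\equiv\mathbf F$.

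Assertion (ii) is a direct application of Proposition~\ref{0208-29042015}(iii) with $A_n\equiv\mathbf A_p$ and $F_n\equiv\mathbf F$. Boundedness of $\bigcup_n\mathbf F(B)$ is immediate because $f$ is bounded on bounded sets. For the compactness hypothesis I would note that, by Remark~\ref{07042015-1652}(ii) and uniqueness of integral solutions, $S_{\mathbf A_p}(t)\bar u=S_{\bar{\mathbf A}_p}(t)\bar u$ for $\bar u\in C_0(0,l)$, where $\bar{\mathbf A}_p=\partial\varphi_p$ by Lemma~\ref{08042015-1551}(ii). Since $\varphi_p(u)=0$ forces $u=0$, Lemma~\ref{lem:fi_bdd} gives $\varphi_p(S_{\bar{\mathbf A}_p}(t)\bar u)\le\max\{1,M^2/t\}$ with $M=\sup_{\bar u\in B}\|\bar u\|_{L^2}$; hence $S_{\mathbf A_p}(t)(B)$ is bounded in $W_0^{1,p}(0,l)$ and therefore, by the compact embedding of $W_0^{1,p}(0,l)$ into $C_0(0,l)$ in dimension one, relatively compact in $C_0(0,l)$. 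Proposition~\ref{0208-29042015}(iii) then yields the relative compactness of $\{u_n(t)\}_{n\ge1}$ for each $t\in(0,T]$.

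For the $W_0^{1,p}$-convergence in (i) assume in addition that $(\bar u_n)$ is bounded in $W_0^{1,p}(0,l)$. The energy inequality of Proposition~\ref{08042015-1552}(iii), applied with forcing $\bar{\mathbf F}(u_n)$ whose $L^2$-norm is bounded uniformly (because $\|u_n\|_\infty\le R$), shows that $\varphi_p(u_n(t))$ is bounded uniformly in $n$ and $t$; thus $\{u_n(t)\}$ is bounded in $W_0^{1,p}(0,l)$. Together with the compact embedding and the already-proved convergence $u_n(t)\to u_0(t)$ in $C_0(0,l)$, every weak limit point in $W_0^{1,p}(0,l)$ must equal $u_0(t)$, so $u_n(t)\rightharpoonup u_0(t)$ weakly. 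As $W_0^{1,p}(0,l)$ is uniformly convex, it remains only to prove the norm convergence $\varphi_p(u_n(t))\to\varphi_p(u_0(t))$.

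The norm convergence is where the real work lies, and I would extract it from the Lyapunov function. Testing the difference of the equations $\dot u_n+\bar{\mathbf A}_pu_n=\bar{\mathbf F}(u_n)$ (valid a.e.\ by Theorem~\ref{06052015-1149}) against $u_n-u_0$ in $L^2$, and using the strong monotonicity (\ref{15042015-1337}) and the Lipschitz bound on $\bar{\mathbf F}$, gives for $0<s<t$
\[
c_p\int_s^t\|u_n(\tau)-u_0(\tau)\|_{W_0^{1,p}}^p\d\tau\le\tfrac12\|u_n(s)-u_0(s)\|_{L^2}^2+L_R\int_s^t\|u_n(\tau)-u_0(\tau)\|_{L^2}^2\d\tau\to0,
\]
so along a subsequence $\varphi_p(u_{n_k}(\tau))\to\varphi_p(u_0(\tau))$ for a.e.\ $\tau$. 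Now set $\psi_n:=\varphi_{p,f}\circ u_n$; by the Lyapunov identity (\ref{15052015-1726}) each $\psi_n$ is nonincreasing, while $\psi_0$ is continuous on $(0,T]$ and $\int_0^l\mathcal F(\cdot,u_n(\tau))\d x\to\int_0^l\mathcal F(\cdot,u_0(\tau))\d x$ uniformly in $\tau$ by the uniform convergence of $u_n$. Choosing points $s\uparrow t$ at which the a.e.\ convergence holds, monotonicity gives $\limsup_k\psi_{n_k}(t)\le\psi_0(s)$ for all such $s$, whence $\limsup_k\psi_{n_k}(t)\le\psi_0(t)$; subtracting the convergent $\mathcal F$-term this reads $\limsup_k\varphi_p(u_{n_k}(t))\le\varphi_p(u_0(t))$, while the reverse inequality is the weak lower semicontinuity of $\varphi_p$ under $u_n(t)\rightharpoonup u_0(t)$. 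A subsequence argument promotes this to $\varphi_p(u_n(t))\to\varphi_p(u_0(t))$, i.e.\ $\|u_n(t)\|_{W_0^{1,p}}\to\|u_0(t)\|_{W_0^{1,p}}$, and uniform convexity turns weak into strong convergence. The main obstacle is exactly this last step: the monotonicity and weak information are only integral or one-sided, and it is the monotonicity in time of the Lyapunov function $\varphi_{p,f}$ that lets one pin down the full strong convergence at every fixed $t\in(0,T]$.
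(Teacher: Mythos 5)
Your proof is correct, and parts of it coincide with the paper's: the first assertion of (i) and assertion (ii) are obtained there exactly as you do, from Proposition \ref{0208-29042015} (ii) and (iii) respectively (the paper verifies the compactness of the semigroup $S_{{\mathbf A}_p}(t)$ in a separate lemma via Proposition \ref{23042015-2220}, where you instead quote Lemma \ref{lem:fi_bdd} directly with $L_0=\{0\}$ and the compact embedding --- both routes work). The genuine divergence is in the $W_0^{1,p}$-convergence of (i). The paper's argument is pointwise in time: it truncates $f$ to make $\tilde{\mathbf F}$ globally Lipschitz, uses the energy inequality to bound $\|\dot u_n\|_{L^2(0,T;L^2)}$, deduces that the forcings ${\mathbf f}_n=\bar{\mathbf F}\circ u_n$ lie in $W^{1,1}(0,T;L^2)$ with uniformly bounded derivative, and then invokes Proposition \ref{12042015-1740} to bound $\dot u_n(t)$ in $L^2$ at the fixed time $t$; the strong monotonicity (\ref{15042015-1337}) applied at that single time, with $\partial\varphi_p(u_n(t))={\mathbf f}_n(t)-\dot u_n(t)$ bounded, shows that $(u_n(t))$ is Cauchy in $W_0^{1,p}(0,l)$. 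You instead integrate the monotonicity inequality over a time interval to get convergence in $L^p(s,t;W_0^{1,p})$, hence a.e.-in-time strong convergence along a subsequence, and then propagate to every fixed $t$ using the monotonicity of the Lyapunov function $\varphi_{p,f}$, weak lower semicontinuity of $\varphi_p$, and uniform convexity. Your route avoids the time-regularity machinery (Proposition \ref{12042015-1740} and the $W^{1,1}$ bound on the forcing) but leans on the Lyapunov structure and a softer subsequence argument; the paper's yields a direct quantitative Cauchy estimate at each $t$. Both are sound.
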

\noindent Before proving the general theorem we shall prove it for the contraction semigroup generated by ${\mathbf A}_p$.
\begin{lemma}
For any $t>0$ and bounded $B\subset C_0 (0,l)$, the set $S_{{\mathbf A}_p} (t) (B)$ is a relatively compact subset of $W_{0}^{1,p}(0,l)$.
In particular, the semigroup $\{ S_{{\mathbf A}_p} (t)\colon C_0 (0,l)\to C_0 (0,l)\}_{t\geq 0}$ is compact.\\
\end{lemma}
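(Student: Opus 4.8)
The plan is to transfer the problem to the $L^2$-realization $\bar{\mathbf A}_p=\partial\varphi_p$, where the regularity machinery of Section~2 is available, and then to upgrade the weak bounds to strong compactness by a change of variables. Fix $t>0$ and a bounded $B\subset C_0(0,l)$; since $C_0(0,l)$ embeds continuously into $L^2(0,l)$, there is $M>0$ with $\|\bar u\|_{L^2}\le M$ for all $\bar u\in B$. By Remark~\ref{07042015-1652}(ii) (applied with $X=C_0(0,l)$, $\tilde X=L^2(0,l)$ and the extension $\bar{\mathbf A}_p$ of ${\mathbf A}_p$) the integral solution in $C_0(0,l)$ is also the integral solution in $L^2(0,l)$, so $S_{{\mathbf A}_p}(t)\bar u=S_{\bar{\mathbf A}_p}(t)\bar u$ for $\bar u\in B$. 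Writing $u=S_{\bar{\mathbf A}_p}(\cdot)\bar u$, it therefore suffices to prove that $\{u(t)\mid \bar u\in B\}$ is relatively compact in $W^{1,p}_0(0,l)$.

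First I would collect the uniform a priori bounds. Applying Lemma~\ref{08042015-1551}(iii) with $\bar u_0=0$ (note $S_{\bar{\mathbf A}_p}(t)0=0$) and $T=t$ gives $\|u(t)\|_{W^{1,p}_0}\le C(t,M)M^{1/p}$, i.e.\ a uniform bound on $\|u'(t)\|_{L^p}$. Applying Proposition~\ref{12042015-1740} with $f\equiv0$ yields $\|\dot u(t)\|_{L^2}\le \sqrt2\,M/t$, and since $u(t)\in D(\bar{\mathbf A}_p)$ we have $\dot u(t)=-\bar{\mathbf A}_p u(t)=(|u'(t)|^{p-2}u'(t))'$ in $L^2(0,l)$. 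Setting $w:=|u'(t)|^{p-2}u'(t)$ and $p':=p/(p-1)$, the two bounds read $\|w'\|_{L^2}=\|\dot u(t)\|_{L^2}\le\sqrt2\,M/t$ and $\|w\|_{L^{p'}}=\|u'(t)\|_{L^p}^{p-1}\le(C(t,M)M^{1/p})^{p-1}$, all uniform over $\bar u\in B$.

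The compactness is then obtained by a change of variables. From the bound on $\|w'\|_{L^2}$ one gets the equi-Hölder estimate $|w(x)-w(y)|\le\|w'\|_{L^2}|x-y|^{1/2}$, and the $L^{p'}$-bound pins $w$ down at one point, so $\{w\}$ is uniformly bounded and equicontinuous; by Arzel\`a--Ascoli it is relatively compact in $C[0,l]$. The inverse of $s\mapsto|s|^{p-2}s$ is the continuous map $\Lambda(s)=|s|^{p'-2}s$, whose associated Nemytskii operator is continuous on $C[0,l]$; hence $\{u'(t)=\Lambda(w)\}$ is relatively compact in $C[0,l]$, and \emph{a fortiori} in $L^p(0,l)$. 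Since $u\mapsto u'$ is an isomorphism of $W^{1,p}_0(0,l)$ onto $\{v\in L^p(0,l)\mid\int_0^l v=0\}$ (with $\|u\|_{W^{1,p}_0}$ equivalent to $\|u'\|_{L^p}$ by Poincar\'e), relative compactness of $\{u'(t)\}$ in $L^p(0,l)$ transfers to relative compactness of $\{u(t)\}$ in $W^{1,p}_0(0,l)$. As $t>0$ and $B$ were arbitrary, this is the first assertion; the ``in particular'' then follows at once from the continuous (indeed compact) embedding $W^{1,p}_0(0,l)\hookrightarrow C_0(0,l)$.

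The main obstacle is that the a priori $W^{1,p}_0$-bound alone yields only \emph{weak} relative compactness, whereas the statement demands strong compactness in $W^{1,p}_0(0,l)$; because $\mathbf A_p$ is nonlinear one cannot simply invoke a compact resolvent. The device that resolves this is the passage to $w=|u'|^{p-2}u'$, which satisfies the linear-in-derivative relation $w'=\dot u\in L^2$ and hence lies in $W^{1,2}(0,l)$, compactly embedded in $C[0,l]$; inverting the continuous monotone nonlinearity then returns strong compactness of $u'$. The points requiring care are the uniform-in-$B$ pointwise bound used to control $\|w\|_\infty$ and the continuity of $\Lambda$ at the origin, where it is merely H\"older (as $p'-1\in(0,1)$).
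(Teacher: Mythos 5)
Your proof is correct, but it takes a genuinely different route from the paper's. The paper first establishes compactness of $\{S_{\bar{\mathbf A}_p}(t)\}_{t>0}$ in $L^2(0,l)$ abstractly, via Proposition \ref{23042015-2220} applied to the level sets of $\varphi_p$ (bounded in $W^{1,p}_0$, hence relatively compact in $L^2$ by Rellich--Kondrachov); it then splits $t=3\alpha$, extracts an $L^2$-convergent subsequence at time $\alpha$, and uses the H\"older-type continuity estimate of Lemma \ref{08042015-1551}(iii), $\|S_{\bar{\mathbf A}_p}(\alpha)\bar v_1-S_{\bar{\mathbf A}_p}(\alpha)\bar v_0\|_{W^{1,p}_0}\le C\|\bar v_1-\bar v_0\|_{L^2}^{1/p}$, to upgrade $L^2$-convergence to $W^{1,p}_0$-convergence over the remaining time. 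You instead derive uniform a priori bounds at time $t$ itself ($\|u(t)\|_{W^{1,p}_0}$ from Lemma \ref{08042015-1551}(iii) with $\bar u_0=0$, and $\|\dot u(t)\|_{L^2}\le\sqrt2 M/t$ from Proposition \ref{12042015-1740}), observe that $w=|u'(t)|^{p-2}u'(t)$ then lies in a bounded subset of $W^{1,2}(0,l)$ and hence in a compact subset of $C[0,l]$ by Arzel\`a--Ascoli, and invert the monotone nonlinearity to recover strong compactness of $u'(t)$. Both arguments check out; the two points you flag (pinning $w$ at one point via the $L^{p'}$-bound, and the mere H\"older continuity of $\Lambda$ at $0$) are handled adequately, since uniform continuity of $\Lambda$ on compacta suffices for the Nemytskii operator on $C[0,l]$. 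The trade-off: your argument is more elementary and even yields the stronger conclusion that $\{u'(t)\}$ is relatively compact in $C[0,l]$, but it leans on the one-dimensional structure (the compact embedding $W^{1,2}(0,l)\hookrightarrow C[0,l]$ and the explicit pointwise form of $\mathbf A_p$); the paper's softer argument reuses machinery (Proposition \ref{23042015-2220} and Lemma \ref{08042015-1551}(iii)) that is needed anyway for the uniform-in-$q$ compactness of Theorem \ref{27072015-1215}, where your pointwise inversion would have to be redone uniformly in the exponent.
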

\begin{proof}
First observe that for any $\lma\in \R$, $L_\lma:=\{ \bar  u\in L^2(0,l)\mid \varphi_p (\bar u)\leq \lma\}$ is bounded in $W_{0}^{1,p}(0,l)$, which by the Rellich-Kondrachov embedding theorem means that $L_\lma$ is relatively compact in $L^2(0,l)$.
Therefore, due to Proposition \ref{23042015-2220}, the semigroup $\{S_{\bar{\mathbf A}_p}(t)\colon L^2(0,l)\to L^2(0,l) \}_{t\geq 0}$ is compact.\\
\indent Now take a bounded set $B\subset C_0(0,l)$, $t>0$ and any sequence $(\bar u_n)$ in $B$.
Put $\alpha=t/3$. Since the set $\{ S_{{\mathbf A}_p}(\alpha)\bar u_n\}_{n\geq 1}$ is relatively compact in $L^2(0,l)$, without any loss of generality we may assume that  $S_{{\mathbf A}_p}(\alpha)\bar u_n \to \tilde v_0$ in $L^2 (0,l)$ for some $\tilde v_0 \in L^2 (0,l)$.
Put $\bar v_n := S_{{\mathbf A}_p} (2\alpha) \bar u_n$ as well as $\bar v_0:=S_{\bar {\mathbf A}_{p}} (\alpha)\tilde v_0$.
Clearly,  $\bar v_n\to \bar v_0$ in $L^2(0,l)$ and, by Lemma \ref{08042015-1551} (ii) and Proposition \ref{12042015-1740}, we see that $\bar v_0\in D(\part \varphi_p)\subset D(\varphi_p)= W^{1,p}_0(0,l)$, which implies $\bar v_0\in C_0 (0,l)$.\\
\indent In view of Lemma \ref{08042015-1551} (iii), one has
\begin{align*}
\|S_{{\mathbf A}_p}(t) \bar u_n - S_{{\mathbf A}_p}(\alpha) \bar v_0 \|_{W_{0}^{1,p}}^p
=  \|S_{{\mathbf A}_{p}} (\alpha)\bar v_n - S_{{\mathbf A}_{p}}(\alpha) \bar v_0\|_{W_{0}^{1,p}}^p
\leq C\cdot \|\bar v_n-\bar v_0\|_{L^2}
\end{align*}
for some constant $C>$. In consequence,  $S_{\mathbf{A}_p}(t)\bar u_n  \to  S_{\mathbf{A}_p} (\alpha)\bar v_0$ in $W_{0}^{1,p} (0,l)$, hence in $C_0 (0,l)$.
\end{proof}
\begin{proof}[Proof of Theorem \ref{1502-29072015}]
(i)  First observe that, due to Proposition \ref{0208-29042015} (ii), $u_n\to u_0$ in $C([0,T], C_0(0,l))$.
To prove the other part of the assertion assume that $(\bar u_n)$ is bounded in $W_{0}^{1,p}(0,l)$. Put $\tilde f(x,u)=f(x,r(u))$, where $r\colon\R\to[-R,R]$ is a metric projection. Let $\bar{\mathbf F}\colon C_0(0,l)\to L^2(0,l)$ and $\tilde{\mathbf F}\colon L^2(0,l)\to L^2(0,l)$ be the Nemytskii operators generated by $f$ (as in Theorem \ref{06052015-1149}) and $\tilde f$, respectively. Define a bounded sequence of elements ${\mathbf f}_n\in L^2(0,T;L^2(0,l))$, $n\geq 1$, by ${\mathbf f}_n:= \bar{\mathbf F}\circ u_n=\tilde{\mathbf F}\circ u_n$. Now observe that in view of Proposition \ref{08042015-1552} (iii) and the boundedness of
$(\bar u_n)$ in $W_0^{1,p}(0,l)$ and $({\mathbf f}_n)$ in $L^2(0,T;L^2(0,l))$ we have $u_n\in W^{1,2}((0,T);L^2 (0,l))$ and there exists $\tilde R>0$ such that
$\|\dot u_n\|_{L^2(0,T;L^2(0,l))}\leq \tilde R$ for all $n\geq 1$.\\
\indent Since $\tilde f$ is Lipschitz with respect to the second variable uniformly with respect to $x$, $\tilde {\mathbf F}$ is Lipschitz. Denote its Lipschitz constant by $\tilde L$. In consequence, for all $n\geq 1$ and $t,s\in [0,T]$, we get
\begin{equation}\label{0216-20150807}
\| {\mathbf f}_n (t) - {\mathbf f}_n (s) \|_{L^2} \leq {\tilde L} \| u_n(t) - u_n(s) \|_{L^2} \leq  {\tilde L} \int_{s}^{t} \|\dot u_n(\tau) \|_{L^2} \d\tau.
\end{equation}
By \cite[Thm 1.4.40]{Cazenave-Haraux}, this implies that
$\|\dot {\mathbf f}_n\|_{L^2(0,T;L^2(0,l))}\leq {\tilde L}\|\dot u_n\|_{L^2(0,T;L^2(0,l))}\leq {\tilde L} \tilde R$. Now fix $t\in (0,T]$.
By Proposition \ref{12042015-1740}, the sequence $(\dot u_n (t))$ is bounded in $L^2(0,l)$. Applying (\ref{15042015-1337}) we get, for all $n,m\geq 1$,
$$
\|u_n(t)-u_m(t)\|_{W_{0}^{1,p}}^{p} \leq c_{p}^{-1} \| \partial \varphi_p (u_n(t)) -  \partial \varphi_p (u_m(t))\|_{L^2} \|u_n (t)-u_m (t)\|_{L^2}.
$$
Since the values
\[ \|\partial \varphi_p (u_n(t)) -  \partial \varphi_p (u_m(t))\|_{L^2} \leq \|{\mathbf f}_n(t)\|_{L^2} +\|{\mathbf f}_m(t)\|_{L^2} + \| \dot u_n(t)\|_{L^2} +\| \dot u_m (t)\|_{L^2}
\] are bounded, we see that $(u_n(t))$ is a Cauchy sequence in $W_{0}^{1,p}(0,l)$, which means that $u_n(t)\to u_0(t)$ in $W_0^{1,p}(0,l)$.\\
\indent (ii) follows immediately from Proposition \ref{0208-29042015} (iii).
\end{proof}

We shall summarize the obtained results in the context of dynamical systems. Define ${\mathbf \Phi}^{(p,f)}\colon  {\mathbf D}^{(p,f)}\to {\mathbf X}$ where ${\mathbf X}:=C_0(0,l)$, by
\[
{\mathbf D}^{(p,f)}:= \left\{ (t,\bar u)\in {[0,+\infty)\times\mathbf X} \mid t< T_{\bar u}^{(p,f)} \right\} \text{ and }
{\mathbf \Phi}^{(p,f)} (t,\bar u) = {\mathbf \Phi}_t^{(p,f)} (\bar u):=u(t),
\]
where $u\colon  [0, T_{\bar u}^{(p,f)})\to {\mathbf X}$ is the maximal integral solution of (\ref{07042015-1828}) with $u(0)=\bar u$.

By Remark \ref{24022016-1242}, ${\mathbf \Phi}^{(p,f)}$ is a local semiflow on ${\mathbf X}$.
\begin{theorem}\label{15052015-1715}
If $u\in C (\R, {\mathbf X} )$ is a bounded solution of {\em (\ref{07042015-1828})}, then $\alpha(u)$  and $\omega(u)$ are nonempty, connected and compact in the space $C_0(0,l)$, ${\mathbf \Phi}_{t}^{(p,f)}(\alpha(u))=\alpha(u)$
and ${\mathbf \Phi}_{t}^{(p,f)}(\omega(u))=\omega(u)$ for all $t\geq 0$ and
$$
\alpha(u) \cup \omega(u)\subset {\cal E}
$$
where ${\cal E}$ is the set of all stationary solutions of {\em (\ref{07042015-1828})}.
\end{theorem}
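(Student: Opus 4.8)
The statement asserts four things about $\alpha(u)$ and $\omega(u)$ for a bounded full solution $u\colon\R\to\mathbf X$: nonemptiness plus compactness, connectedness, invariance under the semiflow, and the inclusion into the stationary set $\mathcal E$. The first three are the standard dynamical-systems consequences of compactness of the orbit closure, so the real content — and the main obstacle — is the last inclusion, which requires the Lyapunov structure from Theorem~\ref{06052015-1149} together with the regularity needed to identify limit points as genuine stationary solutions of (\ref{1348-21082015}).

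\medskip

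First I would establish the topological properties. Since $u$ is bounded, say $\|u(t)\|_\infty\le R$ for all $t\in\R$, Theorem~\ref{1502-29072015}(ii) applied on translated time intervals shows that for any sequence $t_n\to+\infty$ the set $\{u(t_n)\}$ is relatively compact in $C_0(0,l)$ (writing $u(t_n)=\mathbf\Phi_1^{(p,f)}(u(t_n-1))$ with the pre-images bounded by $R$). Hence $\omega(u)$ is nonempty; it is closed and bounded hence compact; connectedness follows from the classical argument that $\omega(u)=\bigcap_{s}\overline{\{u(t):t\ge s\}}$ is a nested intersection of compact connected sets. The invariance $\mathbf\Phi_t^{(p,f)}(\omega(u))=\omega(u)$ for $t\ge0$ follows from continuity of the semiflow together with the fact that no explosion occurs along the bounded orbit (so $T_{\bar u}^{(p,f)}=+\infty$ for points $\bar u\in\omega(u)$). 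The same arguments applied to $t_n\to-\infty$, using that $u$ restricted to $(-\infty,a]$ is a full solution, give the corresponding statements for $\alpha(u)$.

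\medskip

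The heart of the proof is showing $\omega(u)\subset\mathcal E$ (and symmetrically for $\alpha(u)$). Here I would exploit the Lyapunov function $\varphi_{p,f}$ from Theorem~\ref{06052015-1149}. By (\ref{15052015-1726}), $t\mapsto\varphi_{p,f}(u(t))$ is nonincreasing, and since $u$ is bounded in $\mathbf X$ one first needs uniform boundedness of $u(t)$ in $W_0^{1,p}(0,l)$ for $t$ bounded away from the endpoints of $\R$; this comes from the regularization estimates (Proposition~\ref{12042015-1740} and Lemma~\ref{08042015-1551}(iii)) applied on unit time intervals, so $\varphi_{p,f}\circ u$ is bounded below, hence convergent to some limit $c$ as $t\to+\infty$. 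Then for any $\bar v\in\omega(u)$ with $u(t_n)\to\bar v$, the identity (\ref{15052015-1726}) forces $\int_{t_n}^{t_n+1}\int_0^l|\dot u|^2\,\d x\,\d\tau\to0$. The plan is to pass to the limit in the translated solutions $v_n(\tau):=u(t_n+\tau)$: by Theorem~\ref{1502-29072015}(i) they converge in $C([0,1],\mathbf X)$ to the solution $v$ starting at $\bar v$, while the vanishing of the dissipation integral forces $\dot v\equiv0$, so $v(\tau)=\bar v$ is a fixed point of the semiflow. A fixed point of $\mathbf\Phi^{(p,f)}$ is by Theorem~\ref{06052015-1149} an element of $D(\bar{\mathbf A}_p)$ satisfying $\bar{\mathbf A}_p\bar v=\bar{\mathbf F}(\bar v)$, i.e. a stationary solution, giving $\bar v\in\mathcal E$.

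\medskip

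The step I expect to be the main obstacle is making the dissipation argument rigorous, namely passing from the $L^2$-in-time vanishing of $\|\dot u\|_{L^2}^2$ along the sequence to the pointwise conclusion $\dot v\equiv0$ for the limit solution. The difficulty is that convergence $v_n\to v$ holds in $C([0,1],C_0(0,l))$, a weak topology relative to the $W^{1,2}((0,1];L^2)$ setting where the derivatives live, so one cannot simply interchange the limit with $\dot{}$; instead I would use lower semicontinuity of the dissipation functional $v\mapsto\int_0^1\|\dot v\|_{L^2}^2$ under this convergence (guaranteed by the boundedness of $\dot v_n$ in $L^2(0,1;L^2(0,l))$ from Proposition~\ref{08042015-1552}(iii) and weak lower semicontinuity of the $L^2$-norm) to conclude $\int_0^1\|\dot v\|_{L^2}^2\,\d\tau=0$, and hence $\dot v=0$ a.e., so $v$ is constant. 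The remaining verification that a constant-in-time integral solution is a genuine stationary solution of the elliptic problem is then immediate from the characterization in Theorem~\ref{06052015-1149}.
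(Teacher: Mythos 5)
Your treatment of $\omega(u)$ is essentially sound and in fact takes a slightly different route from the paper's: you extract vanishing of the dissipation integral on translated unit windows and pass to the limit by weak lower semicontinuity, whereas the paper instead upgrades the convergence $u(t_n)\to\bar u$ to convergence in $W_0^{1,p}(0,l)$ (via Theorem \ref{1502-29072015}(i)), concludes that $\varphi_{p,f}$ is constant on the limit trajectory, and then invokes \eqref{15052015-1726}. Both arguments work for the forward limit set, where boundedness of $\varphi_{p,f}\circ u$ from below is automatic (the gradient term is nonnegative and $\int_0^l{\cal F}(x,u(x))\d x$ is bounded on bounded sets of $C_0(0,l)$). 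The topological and invariance parts match the paper.

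The genuine gap is in the phrase ``and symmetrically for $\alpha(u)$'' together with your justification of the uniform $W_0^{1,p}(0,l)$ bound. The situation is not symmetric: since $\varphi_{p,f}\circ u$ is nonincreasing, its limit as $t\to-\infty$ exists only in $(-\infty,+\infty]$ and could a priori be $+\infty$, in which case the total backward dissipation $\int_{-\infty}^0\|\dot u\|_{L^2}^2\d\tau$ is infinite and you cannot conclude that $\int_{t_n}^{t_n+1}\|\dot u\|_{L^2}^2\d\tau\to 0$ along $t_n\to-\infty$. Ruling this out is exactly the content of the paper's Lemma \ref{09022016-1850}, which shows $\sup_{t\in\R}\|u(t)\|_{W_0^{1,p}}<+\infty$ by a separate contradiction argument: if $\varphi_{p,f}(u(t))\to+\infty$ as $t\to-\infty$, the differential inequality for $\tfrac{\d}{\d t}\tfrac12\|u(t)\|_{L^2}^2$ forces $\|u(t)\|_{L^2}\to+\infty$ backward in time, contradicting boundedness. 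The tools you cite for this bound do not deliver it: Proposition \ref{12042015-1740} requires the forcing term to lie in $W^{1,1}([0,T],H)$, which for ${\mathbf f}=\bar{\mathbf F}\circ u$ presupposes control of $\dot u$ and is therefore circular, and Lemma \ref{08042015-1551}(iii) is a smoothing estimate for the unforced semigroup $S_{\bar{\mathbf A}_p}$ only. You would need either the paper's Lemma \ref{09022016-1850} or an explicit smoothing estimate for the forced subdifferential equation (e.g.\ an integral bound on $\varphi_p\circ u$ over unit intervals combined with the monotonicity of $\varphi_{p,f}\circ u$), neither of which appears in your proposal.
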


\begin{lemma}\label{09022016-1850}
If $u\in C(\R, {\mathbf X})$ is a bounded solution of {\em (\ref{07042015-1828})}, then
$$
\sup_{t\in\R} \|u(t)\|_{W_0^{1,p}}< +\infty.
$$
\end{lemma}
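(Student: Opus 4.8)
The plan is to combine the coercivity estimate (\ref{15042015-1337}) with the Lyapunov structure (\ref{15052015-1726}). Let $R>0$ satisfy $\|u(t)\|_\infty \le R$ for all $t\in\R$. Since $f$ is continuous, $M_f := \sup\{|f(x,s)| : x\in[0,l],\ |s|\le R\}<+\infty$, and hence along the trajectory $\|\bar{\mathbf F}(u(t))\|_{L^2}\le \sqrt{l}\,M_f$ and $\left|\int_0^l {\cal F}(x,u(t)(x))\,\d x\right| \le l M_f R$ for every $t$; the $C_0$-bound also gives $\|u(t)\|_{L^2}\le \sqrt{l}\,R$.

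First I would bound the \emph{time average} of the $W_0^{1,p}$-norm. By Theorem \ref{06052015-1149}, for a.e.\ $t$ one has $u(t)\in D(\bar{\mathbf A}_p)$ and $\dot u(t) = -\bar{\mathbf A}_p u(t) + \bar{\mathbf F}(u(t))$ in $L^2(0,l)$. Pairing with $u(t)$, applying (\ref{15042015-1337}) with $v=0$ (note $0\in D(\bar{\mathbf A}_p)$ and $\bar{\mathbf A}_p 0 = 0$, so $(\bar{\mathbf A}_p u(t),u(t))_{L^2}\ge c_p\|u(t)\|_{W_0^{1,p}}^p$), and using that $\frac{\d}{\d t}\,\tfrac12\|u(t)\|_{L^2}^2 = (\dot u(t),u(t))_{L^2}$ (legitimate since $u\in W^{1,2}_{loc}$ into $L^2$ by Theorem \ref{06052015-1149}), I obtain for a.e.\ $t$
$$ c_p\|u(t)\|_{W_0^{1,p}}^p \le (\bar{\mathbf F}(u(t)),u(t))_{L^2} - \frac{\d}{\d t}\,\tfrac12\|u(t)\|_{L^2}^2 \le l M_f R - \frac{\d}{\d t}\,\tfrac12\|u(t)\|_{L^2}^2. $$
Integrating over $[t-1,t]$ makes the boundary term drop to $\tfrac12\|u(t-1)\|_{L^2}^2\le\tfrac12 l R^2$, so $\int_{t-1}^{t}\|u(\tau)\|_{W_0^{1,p}}^p\,\d\tau \le C$ with $C$ independent of $t\in\R$.

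The main obstacle is upgrading this averaged bound to a pointwise-in-$t$ bound: the $W_0^{1,p}$-norm is not continuous for $C_0$-convergence, so one cannot simply pass to a limit along $\alpha(u)$ or $\omega(u)$. Here the Lyapunov function resolves the difficulty. From the averaged bound, for each $t$ there is $\tau_t\in[t-1,t]$ with $\|u(\tau_t)\|_{W_0^{1,p}}^p\le C$, whence $\varphi_{p,f}(u(\tau_t)) = \tfrac1p\|u(\tau_t)\|_{W_0^{1,p}}^p - \int_0^l {\cal F}(x,u(\tau_t)(x))\,\d x \le \tfrac{C}{p} + l M_f R =: C'$. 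Since $\tau_t\le t$ and (\ref{15052015-1726}) shows that $\varphi_{p,f}\circ u$ is nonincreasing, I get $\varphi_{p,f}(u(t)) \le \varphi_{p,f}(u(\tau_t)) \le C'$ for every $t\in\R$.

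Finally I would read off the conclusion from the definition of $\varphi_{p,f}$: for each $t$,
$$ \tfrac1p\|u(t)\|_{W_0^{1,p}}^p = \varphi_{p,f}(u(t)) + \int_0^l {\cal F}(x,u(t)(x))\,\d x \le C' + l M_f R, $$
a bound independent of $t\in\R$. Taking the supremum over $t$ yields $\sup_{t\in\R}\|u(t)\|_{W_0^{1,p}}<+\infty$, as claimed. I expect the only delicate points beyond the averaging step to be the justification of the chain rule for $\|u(t)\|_{L^2}^2$ and the extension of (\ref{15052015-1726}) from bounded intervals to arbitrary $s<t$ in $\R$ by time-translation, both of which follow directly from the regularity in Theorem \ref{06052015-1149}.
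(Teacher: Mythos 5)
Your proof is correct, and it rests on exactly the same two ingredients as the paper's: the identity $\frac{\d}{\d t}\tfrac12\|u(t)\|_{L^2}^2=(\dot u(t),u(t))_{L^2}=-(\bar{\mathbf A}_pu(t),u(t))_{L^2}+(\bar{\mathbf F}(u(t)),u(t))_{L^2}$ and the monotonicity of $\varphi_{p,f}\circ u$ from (\ref{15052015-1726}). The only difference is organizational: you argue directly, integrating over unit intervals to get a uniform time-averaged bound on $\|u(\tau)\|_{W_0^{1,p}}^p$, picking a good time $\tau_t\in[t-1,t]$, and propagating forward by monotonicity of the Lyapunov function, which yields an explicit quantitative bound; the paper instead argues by contradiction, observing that unboundedness of $\|u(t_n)\|_{W_0^{1,p}}$ forces $\varphi_{p,f}(u(t))\to+\infty$ as $t\to-\infty$ and then that the same $L^2$-energy identity makes $\|u(t)\|_{L^2}$ blow up backward in time. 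All the delicate points you flag (the chain rule for $\|u(t)\|_{L^2}^2$ and the translation of (\ref{15052015-1726}) to arbitrary real $s<t$) are indeed covered by Theorem \ref{06052015-1149}, so no gap remains.
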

\begin{proof} Suppose  the contrary. There exists $(t_n)$ in $\R$ such that $\|u(t_n)\|_{W_{0}^{1,p}} \to +\infty$ as $n\to +\infty$.
Clearly, due to Theorem \ref{06052015-1149} and the boundedness of $u(\R)$ in $C_0(0,l)$, we see that  $\varphi_{p,f}(u(t_n))\to +\infty$ and $t_n\to -\infty$, which simply implies
$$
\lim_{t\to -\infty} \varphi_{p,f}(u(t))=+\infty.
$$
On the other hand, for a.e. $t\in\R$,
\begin{align*}
\frac{\d}{\d t}\left(\frac{1}{2}\|u(t)\|_{L^2}^{2} \right) & = (\dot u(t), u(t))_{L^2}\\
& = - p \,\varphi_{p,f}(u(t))- p\int_{0}^{l} {\cal F}(x,u(t)(x)) \d x  + \int_{0}^{l} f(x,u(t)(x))u(t)(x)\d x\\
& \leq -p \,\varphi_{p,f}(u(t)) + C
\end{align*}
for some constant $C>0$. Hence there exists $t_0\in\R$ such that, for all $t\leq t_0$,
$$
\frac{\d}{\d t}\left(\frac{1}{2}\|u(t)\|_{L^2}^{2} \right) \leq - 1,
$$
which implies
$$
\frac{1}{2} \|u(t)\|_{L^2}^{2} \geq t_0-t + \frac{1}{2}\|u(t_0)\|_{L^2}^{2} \to + \infty, \ \mbox{ as } \to t \to - \infty,
$$
a contradiction. \end{proof}

\begin{proof}[Proof of Theorem \ref{15052015-1715}]
First observe that $u(\R)$ is a relatively compact subset of $C_0(0,l)$. Indeed, take any $(t_n)$ in $\R$. Then $u(t_n)={\mathbf \Phi}_1^{(p,f)}(\bar u_n)$ where $\bar u_n = u(t_n-1)$ for $n\geq 1$. The sequence $(\bar u_n)$ is bounded, therefore, due to Theorem \ref{1502-29072015}, $\left( {\mathbf
\Phi}_1^{(p,f)} (\bar u_n)\right)$ contains a convergent subsequence. Hence $\alpha (u)$ and $\omega (u)$ are nonempty and compact.
The ${\mathbf \Phi}^{(p,f)}$-invariance follows in a similar manner by the use of the compactness of $\Phi_1^{(p,f)}$. Furthermore, it follows from Theorem \ref{06052015-1149} that $u\in C(\R, W_{0}^{1,p}(0,l))$ and ${\varphi}_{p,f}\circ u$ is non-increasing, which means that the limits $\lim_{t \to -\infty} \varphi_{p,f} (u(t))$ and $\lim_{t\to +\infty} {\mathbf \varphi}_{p,f} (u(t))$ exist and the latter is finite.\\
\indent Now take $\bar u\in \alpha(u) \cup \omega(u)$ and suppose that either $t_n\to +\infty$ or $t_n\to -\infty$ and $u(t_n)\to \bar u$ as $n\to +\infty$. By the relative compactness of $u(\R)$,
for a fixed $\tau>0$, passing to a subsequence we may assume that
$u(t_n-\tau) \to \bar u_0$ in $C_0(0,l)$ for some $\bar u_0\in C_0(0,l)$ such that $\bar u = {\mathbf \Phi}_{\tau}^{(p,f)}(\bar u_0)$.
Since $u(\R)$ is bounded in $C_0(0,l)$ we see that, by Lemma \ref{09022016-1850}, that  $u(\R)$ is bounded in $W_{0}^{1,p}(0,l)$.
Therefore, by use of Theorem \ref{1502-29072015} (i), we have  $u(t_n) = {\mathbf  \Phi}_{\tau}^{(p,f)} (u(t_n-\tau)) \to {\mathbf \Phi}_{\tau}^{(p,f)} (\bar u_0)=\bar u$ in $W_0^{1,p}(0,l)$. For the same reason, we have
$u(t+t_n)={\mathbf \Phi}_{t}^{(p,f)} (u(t_n)) \to {\mathbf \Phi}_{t}^{(p,f)} (\bar u)$ in $W_{0}^{1,p}(0,l)$ for any $t> 0$.
Hence, for any $t\geq 0$,
$$
{\mathbf \varphi}_{p,f} ({\mathbf \Phi}_{t}^{(p,f)}(\bar u)) = \lim_{n\to +\infty} {\mathbf \varphi}_{p,f} (u(t+t_n)) = \lim_{n\to +\infty} {\mathbf \varphi}_{p,f} (u(t_n)) = {\mathbf \varphi}_{p,f} (\bar u).
$$
This  together with (\ref{15052015-1726}) means that ${\mathbf \Phi}_{t}^{(p,f)} (\bar u) = \bar u$ for all $t\geq 0$,
i.e. $\bar u\in {\cal E}$.
\end{proof}
\begin{proposition}\label{1650-20082015}
Any bounded set ${\mathbf N}\subset {\mathbf X}$ is strongly ${\mathbf \Phi}^{(p,f)}$-admissible {\em(}in the sense of Definition {\em \ref{1403-29072015}}{\em)}.
\end{proposition}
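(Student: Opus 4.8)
The plan is to verify separately the two requirements of strong admissibility from Definition \ref{1403-29072015}: first that $\mathbf{\Phi}^{(p,f)}$ does not explode in $\mathbf{N}$, and second that $\mathbf{N}$ is $\mathbf{\Phi}^{(p,f)}$-admissible. Throughout I set $R:=\sup_{u\in \mathbf{N}}\|u\|_\infty$, which is finite because $\mathbf{N}$ is bounded in $\mathbf{X}=C_0(0,l)$.

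For the non-explosion part, suppose $\bar u\in \mathbf{X}$ satisfies $\mathbf{\Phi}^{(p,f)}_{[0,T_{\bar u}^{(p,f)})}(\bar u)\subset \mathbf{N}$. Then the maximal integral solution $u$ with $u(0)=\bar u$ obeys $\|u(t)\|_\infty\le R$ for every $t\in[0,T_{\bar u}^{(p,f)})$. By the blow-up alternative of Theorem \ref{29072015-1333}, the only way to have $T_{\bar u}^{(p,f)}<+\infty$ would be $\limsup_{t\to T_{\bar u}^{(p,f)-}}\|u(t)\|_\infty=+\infty$, which is excluded by this uniform bound. Hence $T_{\bar u}^{(p,f)}=+\infty$, so $\mathbf{\Phi}^{(p,f)}$ does not explode in $\mathbf{N}$.

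For admissibility, take $(t_n)$ in $[0,+\infty)$ with $t_n\to+\infty$ and $(v_n)$ in $\mathbf{X}$ with $\mathbf{\Phi}^{(p,f)}_{[0,t_n]}(v_n)\subset \mathbf{N}$; I must produce a convergent subsequence of $\bigl(\mathbf{\Phi}^{(p,f)}_{t_n}(v_n)\bigr)$. The idea is to use the semiflow property to shift to a fixed time window. Since $t_n\to+\infty$, we have $t_n>1$ for all large $n$, so I discard the finitely many remaining indices. Put $\bar w_n:=\mathbf{\Phi}^{(p,f)}_{t_n-1}(v_n)\in \mathbf{N}$ and let $w_n(s):=\mathbf{\Phi}^{(p,f)}_s(\bar w_n)$ for $s\in[0,1]$. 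By the cocycle identity these are exactly the pieces $w_n(s)=\mathbf{\Phi}^{(p,f)}_{t_n-1+s}(v_n)$ of the original trajectories, so each $w_n$ is a solution of (\ref{07042015-1828}) on $[0,1]$ with $\|w_n(s)\|_\infty\le R$ for all $s\in[0,1]$ and all $n$, since $t_n-1+s\in[0,t_n]$.

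These are precisely the hypotheses of the compactness statement Theorem \ref{1502-29072015}(ii) with $T=1$, which yields that $\{w_n(1)\}_n=\bigl\{\mathbf{\Phi}^{(p,f)}_{t_n}(v_n)\bigr\}_n$ is relatively compact in $C_0(0,l)$; in particular it has a convergent subsequence. Thus $\mathbf{N}$ is $\mathbf{\Phi}^{(p,f)}$-admissible and, combined with the non-explosion property, strongly $\mathbf{\Phi}^{(p,f)}$-admissible. The only point requiring care is the reduction to a fixed compact time interval via the semiflow/cocycle property; once that is set up, all the genuine work has already been carried out in the parabolic smoothing and compactness result Theorem \ref{1502-29072015}(ii).
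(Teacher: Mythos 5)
Your proof is correct and follows essentially the same route as the paper: the non-explosion part via the blow-up alternative of Theorem \ref{29072015-1333}, and the admissibility part by using the semiflow property to write ${\mathbf \Phi}^{(p,f)}_{t_n}(v_n)$ as the time-$\tau$ image of a uniformly bounded family of solutions on a fixed interval and then invoking the compactness statement of Theorem \ref{1502-29072015}(ii). No gaps.
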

\begin{proof} Fix
$(t_n)$ in $(0,+\infty)$ and $(\bar u_n)$ in ${\mathbf N}$ such that
${\mathbf \Phi}^{(p,f)} ([0,t_n]\times \{ \bar u_n\})\subset {\mathbf N}$. Then observe that ${\mathbf \Phi}_{t_n}^{(p,f)}(\bar u_n) = {\mathbf \Phi}_{\tau}^{(p,f)} ({\mathbf \Phi}_{t_n-\tau}^{(p,f)}(\bar u_n)) \subset
{\mathbf \Phi}_{\tau}^{(p,f)} ({\mathbf N})$  for a fixed $\tau>0$ and  all sufficiently large $n\geq 1$. Hence, due to Theorem \ref{1502-29072015} (ii), $\left({\mathbf \Phi}_{t_n}^{(p,f)}(\bar u_n)\right)$ contains a convergent subsequence. Since ${\mathbf N}$ is bounded, the local semiflow ${\mathbf \Phi}^{(p,f)}$
does not blow up in ${\mathbf N}$, due to Theorem \ref{29072015-1333}, which completes the proof.
\end{proof}

\section{Continuity and compactness along $p$}

We start with a fundamental theorem on continuity and compactness of semigroups with respect to $p$.
\begin{theorem}\label{27072015-1215}$\mbox{ }$\\
{\em (i)} \parbox[t]{144mm}{If $p_n\geq 2$, $n\geq 1$, and $p_n\to p$ as $n\to +\infty$, then ${\mathbf A}_{p_n}\stackrel{\Gr}{\to}{\mathbf A}_p$  and therefore
$$
S_{{\mathbf A}_{p_n}} (t )\bar u    \to    S_{{\mathbf A}_p} (t)\bar u\  \text{ as } \ n\to +\infty\ \text{ for any }\bar u \in C_0(0,l)\text{ and }t\geq 0.
$$}\\[0.5em]
{\em (ii)} \parbox[t]{144mm}{For any bounded  $B\subset C_0 (0,l)$ and $t>0$, the set $\bigcup_{q \in [2,p]} S_{{\mathbf A}_q} (t) (B)$ is relatively compact in $C_0(0,l)$.}
\end{theorem}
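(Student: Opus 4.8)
The plan is to prove the two parts separately, deducing the semigroup convergence in (i) from graph convergence via the Trotter--Kato theorem, and building the uniform compactness in (ii) on the abstract criterion of Proposition~\ref{23042015-2220}. For (i), the main step is to verify ${\mathbf A}_{p_n}\stackrel{\Gr}{\to}{\mathbf A}_p$ in $C_0(0,l)$, i.e. $\Gr{\mathbf A}_p\subset\liminf\Gr{\mathbf A}_{p_n}$. Fix $(u,v)\in\Gr{\mathbf A}_p$, so $v\in C_0(0,l)$ and $|u'|^{p-2}u'=c-V$ with $V(x):=\int_0^x v$ and $c$ the unique constant forcing $u(l)=0$ (recall $u(0)=0$). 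Writing $\phi_q(s):=|s|^{1/(q-1)}\mathrm{sgn}(s)$ for the inverse of $s\mapsto|s|^{q-2}s$, I would define $u_n$ by $u_n'=\phi_{p_n}(c_n-V)$, where $c_n$ is chosen so that $\int_0^l\phi_{p_n}(c_n-V)=0$, i.e. $u_n(l)=0$. Then $(u_n,v)\in\Gr{\mathbf A}_{p_n}$ with the \emph{same} $v$, since $|u_n'|^{p_n-2}u_n'=c_n-V$ is $C^1$ with derivative $-v\in C_0(0,l)$. As $\phi_{p_n}\to\phi_p$ uniformly on compacta and the root $c_n$ of the strictly increasing continuous map $c\mapsto\int_0^l\phi_{p_n}(c-V)$ converges to $c$, one gets $u_n'\to u'$ uniformly, hence $u_n\to u$ in $C_0(0,l)$. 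This yields the graph convergence, and Proposition~\ref{22042015-1020}(i) with $f_n\equiv0$ and constant initial datum $\bar u$ gives $S_{{\mathbf A}_{p_n}}(t)\bar u=\Sigma_{{\mathbf A}_{p_n}}(\bar u,0)(t)\to\Sigma_{{\mathbf A}_p}(\bar u,0)(t)=S_{{\mathbf A}_p}(t)\bar u$.

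For (ii), I would first obtain the analogous compactness in $L^2(0,l)$ for the extended semigroups $\{S_{\bar{\mathbf A}_q}(s)\}_{q\in[2,p]}$. Given any sequence $q_m\in[2,p]$, apply Proposition~\ref{23042015-2220} to $\{\varphi_{q_m}\}$: a sublevel set $\{\varphi_{q_m}\le\lambda\}$ satisfies $\int_0^l|u'|^{q_m}\le q_m\lambda\le p\lambda$, and by the power-mean inequality (using $q_m\ge2$) this bounds $\int_0^l|u'|^2$ by a constant independent of $m$; thus $L_\lambda$ is bounded in $H_0^1(0,l)$ and, by Rellich--Kondrachov, relatively compact in $L^2(0,l)$. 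Proposition~\ref{23042015-2220} then gives that $\bigcup_m S_{\bar{\mathbf A}_{q_m}}(s)(B)$ is relatively compact in $L^2(0,l)$ for every $s>0$ and bounded $B$.

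To upgrade this to relative compactness in $C_0(0,l)$, I would mimic the three-fold time splitting of the fixed-$p$ lemma while tracking the varying exponents. Take $q_m\in[2,p]$ (after a subsequence $q_m\to q_*\in[2,p]$) and $\bar u_m\in B$, and set $\alpha:=t/3$. By the $L^2$ step, after passing to a subsequence $\bar w_m:=S_{\bar{\mathbf A}_{q_m}}(\alpha)\bar u_m\to\tilde v$ in $L^2(0,l)$; repeating the construction of (i) in $L^2(0,l)$ gives $\bar{\mathbf A}_{q_m}\stackrel{\Gr}{\to}\bar{\mathbf A}_{q_*}$, so Proposition~\ref{22042015-1020}(i) yields $\bar z_m:=S_{\bar{\mathbf A}_{q_m}}(\alpha)\bar w_m\to\bar z_*:=S_{\bar{\mathbf A}_{q_*}}(\alpha)\tilde v$ in $L^2(0,l)$, with $\bar z_*\in C_0(0,l)$ by regularization. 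Since the integral solutions in $C_0(0,l)$ and $L^2(0,l)$ coincide (Remark~\ref{07042015-1652}(ii)), $S_{{\mathbf A}_{q_m}}(t)\bar u_m=S_{\bar{\mathbf A}_{q_m}}(\alpha)\bar z_m$, and I would split
\[
S_{\bar{\mathbf A}_{q_m}}(\alpha)\bar z_m=\bigl(S_{\bar{\mathbf A}_{q_m}}(\alpha)\bar z_m-S_{\bar{\mathbf A}_{q_m}}(\alpha)\bar z_*\bigr)+S_{\bar{\mathbf A}_{q_m}}(\alpha)\bar z_*.
\]
The first bracket tends to $0$ in $C_0(0,l)$: Lemma~\ref{08042015-1551}(iii) bounds its $W^{1,q_m}_0$-norm by $C\|\bar z_m-\bar z_*\|_{L^2}^{1/q_m}$ with $C$ independent of $q_m$, and the embedding $W^{1,q_m}_0(0,l)\hookrightarrow C_0(0,l)$ has norm bounded uniformly in $q_m\in[2,p]$ (via $\|w\|_\infty\le l^{1-1/q_m}\|w'\|_{L^{q_m}}$). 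The second term equals $S_{{\mathbf A}_{q_m}}(\alpha)\bar z_*\to S_{{\mathbf A}_{q_*}}(\alpha)\bar z_*$ in $C_0(0,l)$ by part (i). Hence $S_{{\mathbf A}_{q_m}}(t)\bar u_m$ converges in $C_0(0,l)$ along the subsequence, which is the claim.

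The step I expect to be most delicate is this last one: the single-$p$ regularizing lemma does not transfer verbatim because both the initial data and the generators move with $m$, so one must interleave the $L^2$-compactness, the $q$-continuity supplied by graph convergence and Trotter--Kato, and the regularizing estimate of Lemma~\ref{08042015-1551}(iii). It is essential here that both the constant $C$ in that estimate and the embedding constant of $W^{1,q}_0(0,l)\hookrightarrow C_0(0,l)$ are uniform over $q\in[2,p]$.
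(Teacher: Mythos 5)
Your proof is correct, and part (ii) follows essentially the same route as the paper: the $L^2$-compactness of $\bigcup_q S_{\bar{\mathbf A}_q}(s)(B)$ via Proposition~\ref{23042015-2220} and the H\"older/power-mean bound on sublevel sets, followed by the $t/3$ splitting that interleaves $L^2$-compactness, Trotter--Kato, and the regularizing estimate of Lemma~\ref{08042015-1551}(iii) with its $p$-uniform constant and the $p$-uniform embedding $W^{1,q}_0\hookrightarrow C_0$. Where you genuinely diverge is the key step of (i). The paper verifies the equivalent resolvent condition: it first proves $\bar{\mathbf A}_{p_n}\stackrel{\Gr}{\to}\bar{\mathbf A}_p$ in $L^2(0,l)$ by a weak-compactness argument on $v_n=(I+\lambda\bar{\mathbf A}_{p_n})^{-1}u$ and $z_n=|v_n'|^{p_n-2}v_n'$ (Lemma~\ref{1551-20082015}), and then upgrades $(I+\lambda{\mathbf A}_{p_n})^{-1}\bar u\to(I+\lambda{\mathbf A}_p)^{-1}\bar u$ to $H^1_0$-, hence $C_0$-convergence using the strong monotonicity estimate \eqref{15042015-1337} together with an a priori sup-bound on $|u_n'|$ obtained from a zero of $u_n'$. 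You instead verify the inclusion $\Gr{\mathbf A}_p\subset\liminf\Gr{\mathbf A}_{p_n}$ directly, by explicitly inverting the one-dimensional $p$-Laplacian: writing $|u'|^{p-2}u'=c-V$ and producing $(u_n,v)\in\Gr{\mathbf A}_{p_n}$ with the same right-hand side via $u_n'=\phi_{p_n}(c_n-V)$, where $c_n\to c$ follows from local uniform convergence of the strictly increasing maps $c\mapsto\int_0^l\phi_{p_n}(c-V)$. This is more elementary and exploits the dimension one in an essential way; the paper's resolvent route is less explicit but its $L^2$ lemma is then reused verbatim in Step~2 of (ii), whereas you must note (as you do) that your construction also yields $\bar{\mathbf A}_{q_m}\stackrel{\Gr}{\to}\bar{\mathbf A}_{q_*}$ for the $L^2$-extensions, which indeed it does since $|u'|^{q_*-2}u'$ is still absolutely continuous when $(|u'|^{q_*-2}u')'\in L^2$. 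Both arguments are complete; no gap.
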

\noindent In the proof we shall need the following convergence properties.
\begin{lemma}\label{1551-20082015}
If $p_n\geq 2$, $n\geq 1$, and $p_n\to p$ as $n\to +\infty$, then $\bar {\mathbf A}_{p_n}\stackrel{\Gr}{\to} \bar {\mathbf A}_p$
\end{lemma}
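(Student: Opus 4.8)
The plan is to reduce the graph convergence to resolvent convergence, using the equivalence recalled just before Proposition \ref{22042015-1020}: $\bar{\mathbf A}_{p_n}\stackrel{\Gr}{\to}\bar{\mathbf A}_p$ holds iff $(I+\lambda\bar{\mathbf A}_{p_n})^{-1}\bar u\to (I+\lambda\bar{\mathbf A}_p)^{-1}\bar u$ in $L^2(0,l)$ for every $\bar u\in L^2(0,l)$ and $\lambda>0$. By Lemma \ref{08042015-1551}(ii) each $\bar{\mathbf A}_q=\partial\varphi_q$, so the resolvent $u_n:=(I+\lambda\bar{\mathbf A}_{p_n})^{-1}\bar u$ is the unique minimizer of the strictly convex functional $J_n(v):=\tfrac12\|v-\bar u\|_{L^2}^2+\lambda\varphi_{p_n}(v)$, and similarly $u:=(I+\lambda\bar{\mathbf A}_p)^{-1}\bar u$ minimizes $J(v):=\tfrac12\|v-\bar u\|_{L^2}^2+\lambda\varphi_p(v)$. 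I would then prove $u_n\to u$ in $L^2$ by a Mosco/$\Gamma$-convergence type argument; since it suffices to show that every subsequence admits a further subsequence converging to $u$, I work along an arbitrary subsequence throughout.

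First I establish uniform bounds and compactness. From $J_n(u_n)\le J_n(0)=\tfrac12\|\bar u\|_{L^2}^2$ one gets $\|u_n\|_{L^2}\le 2\|\bar u\|_{L^2}$ and $I_n:=\int_0^l|u_n'|^{p_n}\,\d x\le \tfrac{p_n}{2\lambda}\|\bar u\|_{L^2}^2$, which is bounded since $p_n\to p$. Fixing any $q\in(1,p)$, for large $n$ we have $p_n>q$, so Hölder's inequality on the bounded interval gives $\int_0^l|u_n'|^q\,\d x\le I_n^{q/p_n}\,l^{\,1-q/p_n}$, hence $(u_n)$ is bounded in $W_0^{1,q}(0,l)$. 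By the compact embedding $W_0^{1,q}(0,l)\hookrightarrow C_0(0,l)$ in dimension one, a further subsequence satisfies $u_n\to u_*$ in $C_0(0,l)$ (so in $L^2$) and $u_n\rightharpoonup u_*$ weakly in $W_0^{1,q}(0,l)$.

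Next comes the lower bound. Passing to a subsequence with $I_n\to I_*:=\liminf I_n$, weak lower semicontinuity of $v\mapsto\int|v'|^q$ in $W_0^{1,q}$ together with the Hölder estimate yields $\int_0^l|u_*'|^q\,\d x\le I_*^{q/p}\,l^{\,1-q/p}$ for every $q<p$; letting $q\uparrow p$ and applying Fatou's lemma gives $u_*\in W_0^{1,p}(0,l)$ with $\int_0^l|u_*'|^p\,\d x\le I_*$, whence $\lambda\varphi_p(u_*)\le\liminf\lambda\varphi_{p_n}(u_n)$ (the factors $1/p_n\to1/p$ are harmless as $I_n$ is bounded). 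Combined with $\|u_n-\bar u\|_{L^2}\to\|u_*-\bar u\|_{L^2}$ this gives $J(u_*)\le\liminf J_n(u_n)$. For the matching upper bound I use minimality, $J_n(u_n)\le J_n(w_n)$, with a recovery sequence $w_n$ for the target $u$: when $p_n\le p$ take $w_n=u$ (so $u\in W_0^{1,p_n}$, and $\int|u'|^{p_n}\to\int|u'|^p$ by dominated convergence since $|u'|^{p_n}\le 1+|u'|^p$), while when $p_n>p$ one first approximates $u$ in $W_0^{1,p}$ by smooth $\psi_\varepsilon\in C_c^\infty(0,l)$, for which $\psi_\varepsilon'\in L^\infty$ makes $\tfrac1{p_n}\int|\psi_\varepsilon'|^{p_n}\,\d x\to\tfrac1p\int|\psi_\varepsilon'|^p\,\d x$, and then diagonalizes. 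This gives $\limsup J_n(u_n)\le J(u)=\min J$. Chaining the two inequalities shows $u_*$ minimizes $J$, so by uniqueness $u_*=u$; the subsequence principle then yields $u_n\to u$ in $L^2$, i.e.\ resolvent convergence, which is the desired graph convergence.

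The main obstacle is precisely the mismatch of the domains $W_0^{1,p_n}(0,l)$, whose exponents vary with $n$: the lower semicontinuity of $\int|u_n'|^{p_n}$ under weak convergence cannot be read off directly (the competing norms live in different spaces), and the construction of a recovery sequence for $u\in W_0^{1,p}(0,l)$ is delicate when $p_n>p$ since then $u$ need not belong to $W_0^{1,p_n}(0,l)$. Both difficulties are resolved by the Hölder-interpolation-plus-Fatou device and by density of smooth functions, as indicated above; the remaining estimates are routine.
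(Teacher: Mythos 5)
Your argument is correct, but it takes a genuinely different route from the paper's. Both proofs begin by reducing graph convergence to resolvent convergence via the equivalence from \cite{Barbu-Springer}, but from there they diverge. You exploit the subdifferential structure $\bar{\mathbf A}_q=\partial\varphi_q$ (Lemma \ref{08042015-1551}(ii)) to characterize the resolvent as the unique minimizer of $J_n(v)=\tfrac12\|v-\bar u\|_{L^2}^2+\lambda\varphi_{p_n}(v)$ and then run a Mosco/$\Gamma$-convergence argument: compactness via H\"older interpolation into a fixed $W^{1,q}_0(0,l)$ with $q<p$, a liminf inequality obtained by letting $q\uparrow p$ with Fatou, and a recovery sequence built from smooth approximations (needed precisely when $p_n>p$, since then the target need not lie in $W^{1,p_n}_0$). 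The paper instead works directly with the resolvent equation $\bar{\mathbf A}_{p_n}v_n=\lambda^{-1}(u-v_n)$: it bounds $v_n$ in $H^1_0(0,l)$ and the flux $z_n=|v_n'|^{p_n-2}v_n'$ in $L^{q'}(0,l)$ with $z_n'$ bounded in $L^2(0,l)$, extracts strongly and a.e.\ convergent subsequences of $(z_n)$, and identifies the nonlinear weak limit by inverting $v_n'=|z_n|^{1/(p_n-1)}\mathop{\mathrm{sgn}}z_n$ pointwise. Your approach is more conceptual and essentially proves that $\varphi_{p_n}$ Mosco-converges to $\varphi_p$, which is of independent interest and sidesteps the identification of nonlinear weak limits; the paper's argument is more hands-on and yields, as a by-product, stronger convergence information on the derivatives through the flux variable. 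Both are complete and correct; your subsequence bookkeeping (every subsequence has a further subsequence converging to the unique minimizer of $J$) is handled properly.
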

\begin{proof} Fix $u\in L^2(0,l)$ and $\lambda>0$. Put
\[
\ \ q:=\sup\{p_n\mid n\geq 1\}, \  \ \ q' := \frac{q}{q-1},\ \ \ v_n:=(I+\lambda\bar {\mathbf A}_{p_n})^{-1}u, \  \ \ z_n:=|v_n'|^{p_n-2}v_n'.
\]
Therefore, for any $n\geq 1$,
\begin{equation}\label{t3}\bar {\mathbf A}_{p_n}v_n=\frac 1\lambda(u-v_n) \text{ and hence }\int_0^l|v_n'|^{p_n} \d x=\frac 1\lambda\int_0^l (u-v_n)v_n \d x.\end{equation} From the contractiveness of the resolvents of $\bar {\mathbf A}_{p_n}$ it follows that $\|v_n\|_{L^2}\leq\|u\|_{L^2}$. Now, by means of the H\"older inequality, one may  show that
\begin{itemize}
\item $(v_n)$ is bounded in $H^1_0(0,l)$;
\item $(z_n)$ is bounded in $L^{q'}(0,l)$;
\item $(z'_n)$ is bounded in $L^2(0,l)$ (note that $z'_n=-\bar {\mathbf A}_{p_n}v_n$).
\end{itemize}
As a consequence, passing to a subsequence if necessary, we can assume that
\[v_n\to v\text{ in }L^2(0,l),\ v_n'\rightharpoonup v'\text{ in }L^2(0,l),\ z_n\to z\text{ in }L^{q'}(0,l),\ z_n'\rightharpoonup z'\text{ in }L^2(0,l)
\] and that $z_n\to z$ almost everywhere.
This clearly forces $v_n'\to |z|^{1/(p-1)}\mathop{\mathrm{sgn}}z$ almost everywhere.
Therefore $z=|v'|^{p-2}v'$, $z'=-\bar {\mathbf A}_p v$ and, clearly,
\[
 \bar {\mathbf A}_{p_n}v_n=\frac 1{\lambda}(u-v_n)\to \frac 1{\lambda}(u-v) \text{ in }L^2(0,l).
\]
On the other other hand
\[
\bar {\mathbf A}_{p_n}v_n = - z'_n \rightharpoonup - z' = \bar {\mathbf A}_{p}v
\]
which gives $v=\left(I+\lambda\bar {\mathbf A}_p\right)^{-1}u$ and ends the proof.
\end{proof}
\begin{proof}[Proof of Theorem \ref{27072015-1215}]
(i) Let $\bar u\in C_0 (0,l)$ and put $\bar u_n:=\left(I+\lambda {\mathbf A}_{p_n}\right)^{-1}\bar u$, $\bar u_0:=\left(I+\lambda {\mathbf A}_{p}\right)^{-1}\bar u$. Lemma \ref{1551-20082015} implies that $\bar u_n\to\bar u_0$ in $L^2(0,l)$.
By (\ref{15042015-1337}) we get for some constant $C>0$ that
\begin{align}
C\|\bar u_n-\bar u_0\|_{H^1_0}^{p_n} &\leq \left(\bar {\mathbf A}_{p_n}\bar u_n-\bar {\mathbf A}_p\bar u_0,\bar u_n-\bar u_0\right)_{L^2}-
\left(\bar {\mathbf A}_{p_n}\bar u_0-\bar {\mathbf A}_p\bar u_0,\bar u_n-\bar u_0\right)_{L^2}\nonumber \\
&\leq \|\lambda^{-1}(\bar u-\bar u_n)-\bar {\mathbf A}_p\bar u_0\|_{L^2}\cdot\|\bar u_n-\bar u_0\|_{L^2}+\label{eqM1}\\
&+\left\||\bar u_0'|^{p_n-2}\bar u_0'-|\bar u_0'|^{p-2}\bar u_0'\right\|_{L^2}\cdot\|\bar u_n'-\bar u_0'\|_{L^2}.\nonumber
\end{align}
The first term of the right-hand side of \eqref{eqM1} converges to zero.
Since $\bar u_0'\in C(0,l)$, the sequence $|\bar u_0'|^{p_n-2}\bar u_0'-|\bar u_0'|^{p-2}\bar u_0'$ is bounded in $C(0,l)$ and its pointwise limit equals zero. Therefore $\||\bar u_0'|^{p_n-2}\bar u_0'-\|\bar u_0'\|^{p-2}\bar u_0'\|_{L^2}\to 0$.  Observe, that in order to prove that the second term in \eqref{eqM1} tends to zero and therefore that $\bar u_n \to \bar u_0 $ in  $H_{0}^{1}(0,l)$, and consequently in $C_0 (0,l)$, it suffices to verify the boundedness of $\|\bar u_n'\|_{L^2}$.\\
\indent Put $\bar w_n=|\bar u_n'|^{p_n-2}\bar u_n'$. Then the sequence $\bar w_n'=-\mathbf{A}_{p_n}\bar u_n=\lambda^{-1}(\bar u_n-\bar u)$ is bounded in $L^{1}(0,l)$.  As $\bar u_n\in C_0(0,l)$, there exist points $x_n\in (0,l)$ such that $\bar u_n'(x_n)=0$ and therefore that $\bar w_n(x_n)=0$. Hence, $|\bar w_n(x)|\leq M$ for some $M>0$ and all $x\in[0,T]$. We finally obtain $|\bar u_n'(x)|\leq M^{1/(p_n-1)}$.\\
\indent (ii) We shall prove the assertion in two steps.\\
\noindent {\bf Step  1}. First we prove that for any bounded $B\subset L^2(0,l)$, $p>2$  and $t>0$ the set
$$
\bigcup_{q\in [2,p]} S_{\bar {\mathbf A}_q} (t) (B)
$$
is relatively compact. In view of Proposition \ref{23042015-2220}, it is sufficient to prove that
$$
L_\lambda:=\left\{u\in L^2(0,l)\mid u\in D(\varphi_q) \mbox{ and  } \varphi_q(u)\leq \lambda\text{ for some } 2\leq q\leq p\right\}
$$ is relatively compact for any $\lambda\geq 0$. To this end take any $u\in L_\lma$, i.e.  $\varphi_q (u) \leq \lambda$ for some $q\in [2,p]$. Then $u\in W^{1,q}_0(0,l)$ and $\int_0^l|u'|^q\leq p\lambda$. Therefore $u\in W^{1,2}_0(0,l)$ and
$$
\int_0^l |u'|^2\leq \left(\int_0^l |u'|^q\right)^{\frac 2q}\cdot l^{1-\frac 2q}\leq
(p\lambda)^{\frac 2q}\cdot l^{1-\frac 2q}.
$$
This shows that $L_\lambda$ is bounded in $H^{1}_0 (0,l)$, which due to the Rellich-Kondrachov compact embedding theorem means that  $L_\lma$ is relatively compact in $L^2(0,l)$, which ends the proof of Step 1.\\

\noindent {\bf Step 2}. Now let us take a bounded $B\subset C_0(0,l)$ and $t>0$. Take any $(p_n)$ in $[2,p]$ and $(\bar u_n)$ in $B$. Put $\alpha=t/3$ and define $u_n := S_{{\mathbf A}_{p_n}}(\cdot) \bar u_n$. We are going  to show that the sequence $(u_n(t))$ has a convergent subsequence in $C_0(0,l)$.\\
\indent By use of Step 1, without loss of generality we may assume that the sequence $(\tilde v_n):=(u_n(\alpha))$ converges in $L^2 (0,l)$ to some $\tilde v_0 \in L^2 (0,l)$ and $p_n\to p_0$. Put
\[\bar v_n := u_n(2\alpha)=S_{{\mathbf A}_{p_n}} (\alpha)\tilde v_n\text{ and }\bar v_0:=S_{\bar {\mathbf A}_{p_0}}(\alpha)\tilde v_0 .\] By Lemma \ref{1551-20082015}  and Proposition \ref{22042015-1020} (i), $\bar v_n \to \bar v_0$ in $L^2(0,l)$. Moreover, $\bar v_0\in W^{1,p_0}_0(0,l)$ in view of Proposition \ref{12042015-1740}, which implies that $\bar v_0\in C_0 (0,l)$.
Therefore we can put
\[v_0:= S_{{\mathbf A}_{p_0}}(\cdot)\bar v_0\colon [0,\alpha]\to C_0 (0,l),
\ v_n:=S_{{\mathbf A}_{p_n}} (\cdot)\bar v_n\colon [0,\alpha]\to C_0 (0,l).\]
Then, clearly $u_n(t)=v_n(\alpha)$ for all $n\geq 1$.  Further, we note that
\begin{equation}\label{24022016-1300}
\| v_n (\alpha)-v_0 (\alpha) \|_\infty \leq \|S_{\bar {\mathbf A}_{p_n}} (\alpha)\bar v_n - S_{\bar {\mathbf A}_{p_n}}(\alpha) \bar v_0\|_\infty +
\| S_{{\mathbf A}_{p_n}}(\alpha) \bar v_0 - S_{{\mathbf A}_{p_0}} (\alpha) \bar v_0\|_\infty
\end{equation}
By Lemma \ref{08042015-1551} (iii) and due to the continuity of the embedding of $H_{0}^{1}(0,l)$ into $C_0(0,l)$ we get
\[
\|S_{{\mathbf A}_{p_n}} (\alpha)\bar v_n - S_{{\mathbf A}_{p_n}}(\alpha) \bar v_0\|_\infty \leq
C\|S_{{\bar {\mathbf A}}_{p_n}} (\alpha)\bar v_n - S_{{\bar {\mathbf A}}_{p_n}}(\alpha) \bar v_0\|_{H_{0}^{1}} \leq C'
\|\bar v_n-\bar v_0\|_{L^2}^{1/p_n}\to 0
\]
for some constants $C,C'>0$.
Finally by Proposition \ref{22042015-1020} (i), we get $S_{{\mathbf A}_{p_n}}(\alpha) \bar v_0 \to  S_{{\mathbf A}_{p_0}} (\alpha) \bar v_0$ in $C_0 (0,l)$. This together with (\ref{24022016-1300}) proves that $u_n(t)=v_n(\alpha)\to v_0(\alpha)$ in $C_0 (0,l)$, which completes the proof.
\end{proof}

We will summarize continuity and compactness properties for the equation (\ref{07042015-1828}) in the following
\begin{theorem}\label{11052015-1524} $\mbox{ }$\\
{\em (i)} \parbox[t]{140mm}{{\em (Continuity)} Let $p_n\to p_0$ in $[2,+\infty)$ and $\bar u_n \to \bar u_0$ in $C_0 (0,l)$. Assume that the functions $f_n\colon [0,l] \times \R\to\R$, $n\geq 0$ satisfy \eqref{0128-29042015} with common Lipschitz constants and that $f_n(x,u)\to f_0(x,u)$ uniformly for $(x,u)$ from bounded subsets of $[0,l] \times \R$.
Then $\liminf_{n\to+\infty} T_{\bar u_n}^{(p_n, f_n)} \geq T_{\bar u_0}^{(p_0,f_0)}$. Moreover, if $u_n\colon [0,T] \to C_0(0,l)$, $n\geq 0$, with fixed $T>0$ are the solutions of {\em (\ref{07042015-1828})} with
$p=p_n$, $f=f_n$ $u_n(0)=\bar u_n$, and there is $R>0$ such that $\|u_n(t)\| \leq R$ for all $t\in [0,T]$, $n\geq 1$, then
$u_n \to u_0$ in $C([0,T], C_0(0,l))$.}\\[1em]
{\em (ii)} \parbox[t]{140mm}{{\em (Compactness)} Assume that $\{ p_n\}_{n\geq 1}$ is bounded in $[2,+\infty)$, $f_n\colon[0,l]\times\R\to\R$ are locally Lipschitz and such that the set $\bigcup_{n\geq 1} f_n([0,l]\times[-r,r])$ is bounded for any $r>0$. If $u_n\in C([0,T], C_0(0,l))$, $n\geq 1$, are solutions of {\em (\ref{07042015-1828})} with $p=p_n$, $f=f_n$  and $\bar u =\bar u_n$ and there is $R>0$ such that $\| u_n (t)\|_\infty \leq R$ for all $t\in [0,T]$ and $n\geq 1$, then for any $t\in (0,T]$ the sequence $(u_n(t))$ contains a convergent subsequence in the space $C_0 (0,l)$.}
\end{theorem}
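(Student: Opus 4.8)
The plan is to read both assertions as concrete instances of the abstract continuity and compactness statements in Proposition \ref{0208-29042015}, applied to the $m$-accretive operators $A_n := \mathbf{A}_{p_n}$ and the Nemytskii perturbations $F_n\colon C_0(0,l)\to C_0(0,l)$, $F_n(u)(x):=f_n(x,u(x))$, on the space $X=C_0(0,l)$. The substance of the proof is then only the verification of the hypotheses of that proposition, which I would draw from the $p$-continuity and $p$-compactness already established in Theorem \ref{27072015-1215}. First I would record the facts common to both parts: since $D(\mathbf{A}_p)$ contains every smooth function vanishing at the endpoints, $\overline{D(\mathbf{A}_{p_n})}=C_0(0,l)$ independently of $n$, so the common-domain-closure requirement holds; and since the $f_n$ satisfy \eqref{0128-29042015} with common Lipschitz constants, the $F_n$ are Lipschitz on each ball $\{\|u\|_\infty\leq R\}$ with a constant $L_R$ independent of $n$.

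For part (i) I would argue as follows. Theorem \ref{27072015-1215}(i) gives $\mathbf{A}_{p_n}\stackrel{\Gr}{\to}\mathbf{A}_{p_0}$, while the uniform convergence $f_n\to f_0$ on bounded sets yields, for each fixed $\bar u\in C_0(0,l)$ (whose range lies in a compact interval), $F_n(\bar u)\to F_0(\bar u)$ in $C_0(0,l)$. Together with $\bar u_n\to\bar u_0$ this is exactly the setting of Proposition \ref{0208-29042015}(ii), so $\liminf_{n\to+\infty} T_{\bar u_n}^{(p_n,f_n)}\geq T_{\bar u_0}^{(p_0,f_0)}$ follows at once. For the convergence on the prescribed interval $[0,T]$ under the uniform bound $R$, I would reproduce the Gronwall closing used inside the proof of that proposition: setting $\alpha_n:=\sup_{[0,T]}\|\Sigma_{\mathbf{A}_{p_n}}(\bar u_n,F_n\circ u_0)-\Sigma_{\mathbf{A}_{p_0}}(\bar u_0,F_0\circ u_0)\|_\infty$, one has $\alpha_n\to 0$ by the Trotter--Kato theorem (Proposition \ref{22042015-1020}(i)), which applies because $F_n\circ u_0\to F_0\circ u_0$ in $L^1(0,T;C_0(0,l))$ by the boundedness of $u_0$; splitting $\|u_n(\tau)-u_0(\tau)\|_\infty$ through $\Sigma_{\mathbf{A}_{p_n}}(\bar u_n,F_n\circ u_0)(\tau)$, estimating the first difference by the contraction inequality of Remark \ref{07042015-1652}(iii) and the bound $L_R$, and applying Gronwall, yields $\|u_n(\tau)-u_0(\tau)\|_\infty\leq \alpha_n e^{TL_R}\to 0$ uniformly in $\tau\in[0,T]$.

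For part (ii) I would instead invoke the compactness branch. With $\{p_n\}$ bounded we have $p_n\in[2,p]$ for some $p$, so Theorem \ref{27072015-1215}(ii) provides the relative compactness of $\bigcup_{n\geq 1}S_{\mathbf{A}_{p_n}}(t)(B)$ for every bounded $B\subset C_0(0,l)$ and $t>0$. The hypothesis that $\bigcup_{n\geq 1}f_n([0,l]\times[-r,r])$ is bounded for every $r$ translates into boundedness of $\bigcup_{n\geq 1}F_n(B)$ for every bounded $B$, because the elements of a bounded $B\subset C_0(0,l)$ take values in a fixed interval $[-r,r]$. These are precisely the compactness assumptions of Proposition \ref{0208-29042015}(iii), whose conclusion gives relative compactness of $\{u_n(t)\}_{n\geq 1}$ in $C_0(0,l)$ for every $t\in(0,T]$.

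The only point requiring genuine care — the main obstacle — is the ``moreover'' clause of (i): Proposition \ref{0208-29042015}(ii) delivers convergence on intervals $[0,T]$ with $T<T_{\bar u_0}^{(p_0,f_0)}$, whereas here $T$ is prescribed in advance together with a uniform bound $R$ on all the $u_n$. One must therefore run the estimate directly rather than quote the proposition verbatim; the advantage is that the bound $R$ fixes the Lipschitz constant $L_R$ once and for all, so the Gronwall step closes on the whole of $[0,T]$ with no reference to the (possibly smaller) maximal time $T_{\bar u_0}^{(p_0,f_0)}$. Everything else is a routine translation of the already-proven abstract machinery into the concrete $p$-Laplace setting.
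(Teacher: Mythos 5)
Your proposal is correct and follows essentially the same route as the paper: the paper's proof of this theorem is precisely the one-line citation of Theorem~\ref{27072015-1215} together with Proposition~\ref{0208-29042015}~(ii) and (iii), whose hypotheses you verify in exactly the intended way (graph convergence and pointwise convergence of the Nemytskii maps for part (i), compactness of $\bigcup_n S_{\mathbf{A}_{p_n}}(t)(B)$ and boundedness of $\bigcup_n F_n(B)$ for part (ii)). The only remark is that your ``main obstacle'' is not actually one: since $u_0$ is assumed to be a solution on the closed interval $[0,T]$, the blow-up alternative of Proposition~\ref{0208-29042015}~(i) forces $T<T_{\bar u_0}^{(p_0,f_0)}$, so the proposition applies verbatim and your direct rerun of the Gronwall estimate, while harmless, is not needed.
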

\begin{proof} (i) follows directly from Theorem \ref{27072015-1215} and Proposition \ref{0208-29042015} (ii).
The condition (ii) comes from Theorem \ref{27072015-1215} and Proposition \ref{0208-29042015} (iii).
\end{proof}

At the end of this section, we express the obtained results in terms of parameterized semiflows.
To this end, let ${\mathbf X}=C_0(0,l)$ and let $f\colon[0,l]\times\R\times [0,1]\to\R$ be continuous with $f(x,0,\mu)=0$, for all $x\in [0,l]$ and $\mu\in [0,1]$, and such that for any $R>0$ there exists $L>0$ such that $|f(x,u,\mu)-f(x,v,\mu)|\leq L|u-v|$ whenever $x\in[0,l]$, $|u|,|v|\leq R$ and $\mu\in [0,1]$. Consider
\begin{equation}\label{1006-29042015}
\dot u(t) = - {\mathbf A}_{p} u(t) + {\mathbf F}(u(t),\mu), \ t>0,
\end{equation}
where ${\mathbf F}\colon {\mathbf X} \times [0,1]\to {\mathbf X}$ is defined by
\[[{\mathbf F}(u,\mu)](x) = f (x,u(x),\mu),\ x\in [0,l],\ \mu\in [0,1],\ u\in {\mathbf X}.\]
Define ${\mathbf \Phi}^{(p,\mu)}\colon  {\mathbf D}^{(p,\mu)}\to {\mathbf X}$  by
$$
{\mathbf D}^{(p,\mu)}:= \{ (t,\bar u)\in [0,+\infty)\times {\mathbf X} \mid t< T_{\bar u}^{(p,\mu)} \}
$$
and ${\mathbf \Phi}^{(p,\mu)} (t,\bar u) = {\mathbf \Phi}_t^{(p,\mu)} (\bar u):=u(t)$ where $u\colon [0, T_{\bar u}^{(p,\mu)})\to {\mathbf X}$ is the maximal integral solution of
(\ref{1006-29042015}) with $u(0)=\bar u$. The above results imply that this family of semiflows is continuous with respect to  $p$ and $\mu$ and that bounded sets are strongly admissible.
\begin{proposition}\label{0212-29042015} $\mbox{ }$\\
{\em (i)} \parbox[t]{140mm}{If $\bar u_n \to \bar u_0$ in $C_0(0,l)$, $t_n\to t_0$ in $[0,+\infty)$, $p_n\to p_0$ in $[2,+\infty)$, $\mu_n \to \mu_0$ in $[0,1]$ and $(t_0, \bar u_0)\in {\mathbf D}^{(p_0, \mu_0)}$, then $(t_n, \bar u_n)\in {\mathbf D}^{(p_n, \mu_n)}$ for large $n$ and ${\mathbf \Phi}_{t_n}^{(p_n, \mu_n)}(\bar u_n) \to {\mathbf \Phi}_{t_0}^{(p_0, \mu_0)}(\bar u_0) $ as $n\to +\infty$.}\\[1em]
{\em (ii)} \parbox[t]{140mm}{If ${\mathbf N}\subset {\mathbf X}$ is bounded, then, for any $(p_n )$ in $ [ 2, +\infty )$ an $(\mu_n)$ in $[0,1]$, ${\mathbf N}$ is strongly $({\mathbf \Phi}^{(p_n, \mu_n)})$-admissible.}
\end{proposition}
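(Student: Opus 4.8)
The plan is to read both assertions as the parameterized restatements of the continuity and compactness results already established in Sections 3 and 4, so that essentially no new analysis is needed beyond a change of bookkeeping. First I would set $f_n(x,u):=f(x,u,\mu_n)$ and verify that the family $(f_n)_{n\geq 0}$ fits the hypotheses of Theorem \ref{11052015-1524}: the common Lipschitz constants on bounded sets come directly from the $\mu$-uniform Lipschitz assumption on $f$, while the uniform convergence $f_n\to f_0$ on bounded subsets of $[0,l]\times\R$ follows from the uniform continuity of $f$ on compacta together with $\mu_n\to\mu_0$. In the semiflow language this means that the Nemytskii operators $\mathbf{F}(\cdot,\mu_n)$ share a Lipschitz constant on each ball of $\mathbf{X}=C_0(0,l)$ and satisfy $\mathbf{F}(\bar u,\mu_n)\to\mathbf{F}(\bar u,\mu_0)$ in $\mathbf{X}$ for every $\bar u$.

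To prove (i), I would combine the above with the graph convergence $\mathbf{A}_{p_n}\stackrel{\Gr}{\to}\mathbf{A}_{p_0}$ from Theorem \ref{27072015-1215}(i) and the fact that $\overline{D(\mathbf{A}_p)}=\mathbf{X}$ for every $p$, and apply Proposition \ref{0208-29042015}(ii) to equation (\ref{1006-29042015}). This yields at once $\liminf_{n}T_{\bar u_n}^{(p_n,\mu_n)}\geq T_{\bar u_0}^{(p_0,\mu_0)}$ and the convergence $u_n\to u_0$ in $C([0,T],\mathbf{X})$ for every $T<T_{\bar u_0}^{(p_0,\mu_0)}$, without any a priori uniform bound. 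Choosing $T$ with $t_0<T<T_{\bar u_0}^{(p_0,\mu_0)}$ and using $t_n\to t_0$, the $\liminf$ estimate forces $T_{\bar u_n}^{(p_n,\mu_n)}>t_n$ for large $n$, i.e. $(t_n,\bar u_n)\in\mathbf{D}^{(p_n,\mu_n)}$. The moving-time convergence then follows from the splitting $\|u_n(t_n)-u_0(t_0)\|_\infty\leq\|u_n(t_n)-u_0(t_n)\|_\infty+\|u_0(t_n)-u_0(t_0)\|_\infty$, where the first term is controlled by the uniform convergence on $[0,T]$ and the second by continuity of $u_0$.

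For (ii) I would follow the template of Proposition \ref{1650-20082015}, now for a varying parameter. Given $t_m\to+\infty$, parameters $(p_{n_m},\mu_{n_m})$ and points $(v_m)$ whose orbits up to time $t_m$ remain in $\mathbf{N}$, I fix $\tau>0$ and write, for all large $m$, $\mathbf{\Phi}_{t_m}^{(p_{n_m},\mu_{n_m})}(v_m)=\mathbf{\Phi}_\tau^{(p_{n_m},\mu_{n_m})}(w_m)$ with $w_m:=\mathbf{\Phi}_{t_m-\tau}^{(p_{n_m},\mu_{n_m})}(v_m)\in\mathbf{N}$. Since $\mathbf{N}$ is bounded, $(w_m)$ is bounded and the corresponding orbits on $[0,\tau]$ stay bounded, so Theorem \ref{11052015-1524}(ii) supplies a convergent subsequence of $\mathbf{\Phi}_\tau^{(p_{n_m},\mu_{n_m})}(w_m)$, which is exactly the compactness demanded by Definition \ref{1403-29072015'}. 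Non-explosion of each $\mathbf{\Phi}^{(p,\mu)}$ in the bounded set $\mathbf{N}$ is immediate from the blow-up alternative of Theorem \ref{29072015-1333}, so $\mathbf{N}$ is strongly admissible.

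The main point requiring care is not any single estimate but the translation itself: verifying that $\mu\mapsto f(\cdot,\cdot,\mu)$ yields Nemytskii operators converging with common Lipschitz constants (a uniform-continuity argument on compacta) and deploying the time-shift device in (ii) to move the compactness of Theorem \ref{11052015-1524}(ii) off the initial layer. It is in this last application that the boundedness of the parameter sequence $(p_{n_m})$ enters, since it is needed to invoke the compactness theorem.
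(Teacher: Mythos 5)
Your proposal is correct and follows essentially the same route as the paper, which disposes of (i) by citing Theorem \ref{11052015-1524}(i) (itself resting on Theorem \ref{27072015-1215} and Proposition \ref{0208-29042015}(ii)) and of (ii) by repeating the time-shift argument of Proposition \ref{1650-20082015} with Theorem \ref{11052015-1524}(ii); you merely spell out the details (the $\mu$-uniform Lipschitz/convergence check for the Nemytskii operators and the triangle-inequality handling of the moving time $t_n$) that the paper leaves implicit. Your closing observation that the compactness step really uses boundedness of the sequence $(p_{n_m})$ is accurate and matches how the proposition is actually invoked in the paper.
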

\begin{proof} (i) is a straightforward consequence of  Theorem  \ref{11052015-1524} (i).
The proof (ii) goes along the lines of Proposition \ref{1650-20082015} with use of Theorem \ref{11052015-1524} (ii).
\end{proof}

\section{Proof of Theorem \ref{0914-29032015}}

We start with a computation of Conley index at zero and at infinity.
\begin{theorem} \label{11052015-1559}
Under the assumptions of Theorem {\em \ref{0914-29032015}}\\
{\em (i)} \parbox[t]{140mm}{$K_0:=\{0\}$ is an isolated invariant set with respect to ${\mathbf \Phi}^{(p,f)}$ and
$$
h({\mathbf \Phi}^{(p,f)},K_0 )=\Sigma^{k_0},
$$
where $k_0$ is such that $\lma_{k_0} \leq f'_0(x) \leq \lma_{k_0+1}$, for all $x\in [0,l]$, and the inequalities are strict on a set of positive measure.}\\[1em]
{\em (ii)} \parbox[t]{140mm}{$K_\infty$ consisting of all bounded full orbits for ${\mathbf \Phi}^{(p,f)}$ is a bounded isolated invariant set with a strongly admissible neighborhood and
$$
h({\mathbf \Phi}^{(p,f)}, K_\infty) = \Sigma^{k_\infty},
$$
where $k_\infty$ is such that $\lma_{k_\infty}\leq f'_\infty(x) \leq \lma_{k_\infty+1}$, for all $x\in [0,l]$, and the inequalities are strict on a set of positive measure.}
\end{theorem}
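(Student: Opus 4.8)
The plan is to compute both indices by \emph{continuation along $p$}: I would deform the semiflow ${\mathbf \Phi}^{(p,f)}$ down to a \emph{linear} semiflow at the exponent $q=2$, where Theorem~\ref{11052015-1744} applies, and then read off the exponent of the resulting $\Sigma^k$ as the Morse index of a Sturm--Liouville operator. Two admissible deformations are available: replacing $f$ by the positively $(q-1)$-homogeneous model $g_q(x,u):=f'_\ast(x)|u|^{q-2}u$ (with $f'_\ast=f'_0$ for the index at the origin and $f'_\ast=f'_\infty$ for the index at infinity), and letting $q$ run from $p$ to $2$. Both are instances of the parameterized semiflows of Section~4, so continuity and strong admissibility of a fixed ball along the whole path follow from Proposition~\ref{0212-29042015}, and the index is preserved by property (H4), provided the relevant invariant set stays in the interior of a common isolating neighborhood throughout.

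For part (i) I would first show that $K_0=\{0\}$ is isolated. If not, there are nontrivial bounded full orbits $u_n$ with $\sup_t\|u_n(t)\|_\infty\to 0$; after rescaling the amplitude by $\|u_n\|_\infty$ and time by $\|u_n\|_\infty^{p-2}$ and passing to the limit (using the compactness of Theorem~\ref{1502-29072015} and the hypothesis $f(x,u)/(|u|^{p-2}u)\to f'_0(x)$) one obtains a nontrivial bounded full orbit of the homogeneous problem $-(|v'|^{p-2}v')'=f'_0(x)|v|^{p-2}v$. By the Lyapunov function of Theorem~\ref{06052015-1149} together with Theorem~\ref{15052015-1715}, such an orbit must be a nontrivial stationary solution, i.e. $f'_0$ would be a weighted $p$-eigenvalue, contradicting the strict bracketing $\lambda^{(p)}_{k_0}\le f'_0\le\lambda^{(p)}_{k_0+1}$ on a set of positive measure. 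The same rescaling gives isolation \emph{uniformly} along the deformation from $f$ to $g_q$, $q\in[2,p]$, so $\{0\}$ remains the only invariant set in a fixed small ball and (H4) applies. At the endpoint we reach the linear semiflow generated by $u\mapsto u''+f'_0(x)u$ on $C_0(0,l)$; applying Theorem~\ref{11052015-1744} with $X_+$ the span of the eigenfunctions for the negative eigenvalues of $-\frac{d^2}{dx^2}-f'_0(x)$ under Dirichlet conditions, a standard monotonicity-in-the-potential comparison (using $\lambda^{(2)}_{k_0}\le f'_0\le\lambda^{(2)}_{k_0+1}$, strict on a positive measure set) shows there are exactly $k_0$ such eigenvalues, whence $h({\mathbf \Phi}^{(p,f)},K_0)=\Sigma^{k_0}$.

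For part (ii) the structure is parallel, with the model taken at infinity. First I would prove $K_\infty$ bounded: if there were bounded full orbits with $M_n:=\sup_t\|u_n(t)\|_\infty\to+\infty$, then rescaling the amplitude by $M_n$ and time by $M_n^{p-2}$ and using $f(x,u)/(|u|^{p-2}u)\to f'_\infty(x)$ would again produce a nontrivial bounded full orbit of the homogeneous problem with weight $f'_\infty$, contradicting the strict bracketing by $\lambda^{(p)}_{k_\infty},\lambda^{(p)}_{k_\infty+1}$. Hence $K_\infty\subset B(0,R)$ for some fixed $R$, and $\overline{B(0,R)}$ is strongly admissible by Proposition~\ref{1650-20082015}. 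I would then deform $f$ to $g_p(x,u)=f'_\infty(x)|u|^{p-2}u$ along $f_\mu(x,u)=f'_\infty(x)|u|^{p-2}u+(1-\mu)\bigl(f(x,u)-f'_\infty(x)|u|^{p-2}u\bigr)$; since the correction is of lower order at infinity, all $f_\mu$ share the same behaviour at infinity, so the a~priori bound and the isolation are uniform in $\mu$ and (H4) applies. For the homogeneous problem the only stationary solution is $0$, and the strict Lyapunov decrease forbids nonconstant orbits homoclinic to $0$, so its $K_\infty$ is exactly $\{0\}$; continuing $q$ from $p$ to $2$ and applying Theorem~\ref{11052015-1744} to $u\mapsto u''+f'_\infty(x)u$ gives $h({\mathbf \Phi}^{(p,f)},K_\infty)=\Sigma^{k_\infty}$, the exponent being the number of negative eigenvalues of $-\frac{d^2}{dx^2}-f'_\infty(x)$.

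The main obstacle I expect is not the final linear computation but the uniform control needed to legitimise (H4): one must verify that isolation -- equivalently, non-resonance of the homogeneous model -- is preserved along the \emph{entire} continuation in $q\in[2,p]$, and that the isolating ball can be chosen independently of the deformation parameters. This is exactly where the strict inequalities on a set of positive measure and the continuity $\lambda^{(q)}_n\to\lambda^{(p)}_n$ as $q\to p$ are indispensable: they keep the model weight strictly off the $q$-spectrum, so that no nontrivial homogeneous orbit appears (no bifurcation from $0$, no escape to infinity) and the invariant sets stay inside a common neighborhood; since the Morse index can change only when the weight crosses an eigenvalue, its value at $q=2$ then coincides with $k_0$, resp. $k_\infty$. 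If maintaining non-resonance for \emph{every} intermediate $q$ proves delicate for a variable weight, I would insert a preliminary deformation of $f'_\ast(x)$ to a constant lying in the spectral gap, which makes both the non-resonance and the eigenvalue count transparent. The rescaling (blow-up) arguments for isolation and for the a~priori bound on $K_\infty$, together with the compactness of Theorems~\ref{1502-29072015} and~\ref{11052015-1524}, carry the analytic weight of the proof.
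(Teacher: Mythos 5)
Your proposal is correct and follows essentially the same route as the paper: a blow-up/rescaling argument (amplitude by $r_n$ or $R_n$, time by the $(p-2)$-th power) producing, via the compactness and continuity results of Sections 2--4, a nontrivial bounded full orbit of the homogeneous model, which is excluded by the Lyapunov function together with the non-resonance of $f'_0$, resp. $f'_\infty$; this yields a uniform isolating ball, and (H4) plus continuation to $q=2$ and the linear index theorem give $\Sigma^{k_0}$ and $\Sigma^{k_\infty}$. Even your "fallback" of first deforming the weight to a constant in the spectral gap is precisely what the paper does (Lemma \ref{15052015-0609}) before running $q$ down to $2$.
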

The computation of the indices of $K_0$ and $K_\infty$ will be reduced to computing the Conley index of zero in a special case.
\begin{lemma}\label{15052015-0609}
Let $g\in C([0,l])$ be a such that for some $k\geq 1$, $\lma_k^{(p)} \leq g(x) \leq \lma_{k+1}^{(p)}$ for all $x\in [0,l]$ with the strict inequalities on a set of positive measure and let $\{{\mathbf \Phi}^{(p,g)} \}_{t\geq 0}$ be the local semiflow generated on $C_0(0,l)$ by the problem
\begin{equation}\label{11052015-1622}
\left\{ \begin{array}{l}
u_t =  \left(\left| u_x \right|^{p-2}u_x \right)_x + g(x)|u|^{p-2}u, \ x\in (0,l), \ t>0,\\
u(t,0)=u(t,l)=0, \ t>0.
\end{array}
\right.
\end{equation}
Then  $u \equiv 0$ is the only full bounded solution of ${\mathbf \Phi}^{(p,g)}$ and, in particular, $K:=\{ 0\}$ is an isolated invariant set relative to ${\mathbf \Phi}^{(p,g)}$.
Moreover $h({\mathbf \Phi}^{(p,g)}, K)=\Sigma^k$.
\end{lemma}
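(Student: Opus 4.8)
The plan is to reduce the whole statement to the scaling symmetry of the equation together with the Lyapunov function, and then to compute the index by continuation to the linear case $p=2$ with constant weight. First I would record two structural facts about \eqref{11052015-1622}. Since both $(|u_x|^{p-2}u_x)_x$ and $g|u|^{p-2}u$ are positively homogeneous of degree $p-1$, if $u(t,x)$ is a solution then so is $c\,u(c^{p-2}t,x)$ for every $c>0$; hence the set of equilibria is a cone and any nontrivial bounded full solution produces, as $c\to 0^+$, bounded full solutions arbitrarily close to $0$. Second, the functional of Theorem~\ref{06052015-1149} here specialises to
\[
\varphi_{p,g}(u)=\frac 1p\int_0^l |u'(x)|^p\,\d x-\frac 1p\int_0^l g(x)|u(x)|^p\,\d x,
\]
which is non-increasing along solutions and satisfies $\varphi_{p,g}(\bar u)=0$ at every equilibrium $\bar u$ (test the stationary equation with $\bar u$). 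By Theorem~\ref{15052015-1715} the $\alpha$- and $\omega$-limits of a bounded full solution consist of equilibria, so $\varphi_{p,g}$ equals $0$ at both ends of such a solution; being non-increasing it is then constant, which by \eqref{15052015-1726} forces $\dot u\equiv 0$. Thus every bounded full solution is an equilibrium, and the whole lemma (isolation included) reduces to showing that $0$ is the only solution of
\[
-(|u'|^{p-2}u')'=g(x)|u|^{p-2}u,\quad u(0)=u(l)=0.
\]
Once this is known, every bounded $N\subset{\mathbf X}$ satisfies $\mathrm{Inv}_{{\mathbf \Phi}^{(p,g)}}(N)=\{0\}$, so the closed unit ball is an isolating neighbourhood which, by Proposition~\ref{1650-20082015}, is strongly admissible.

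Second, I would establish the nonexistence of a nontrivial stationary solution, which I expect to be the main obstacle. Here I would invoke the one-dimensional Sturm-type oscillation theory for the $p$-Laplacian: a solution $u\not\equiv 0$ with $u(0)=0$ has a well-defined increasing sequence of zeros, and its oscillation depends monotonically on the weight. Comparing with the eigenfunction of $\lambda_k^{(p)}$ (which has exactly $k$ zeros in $(0,l]$, the last one at $l$), the bound $g\ge \lambda_k^{(p)}$, strict on a set of positive measure, forces the $k$-th zero of $u$ to lie strictly before $l$; symmetrically $g\le \lambda_{k+1}^{(p)}$, strict on a set of positive measure, forces the $(k+1)$-st zero of $u$ to lie strictly beyond $l$. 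Hence $l$ lies strictly between two consecutive zeros of $u$, so $u(l)\neq 0$, contradicting the boundary condition. Consequently $u\equiv 0$, and by the previous paragraph $K=\{0\}$ is an isolated invariant set.

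Finally, I would compute $h({\mathbf \Phi}^{(p,g)},\{0\})$ by continuation, using property (H4) together with Proposition~\ref{0212-29042015}. In a first homotopy I keep $p$ fixed and deform $g$ to a constant $c$ with $\lambda_k^{(p)}<c<\lambda_{k+1}^{(p)}$ along $g_s=(1-s)g+sc$; for every $s$ one has $\lambda_k^{(p)}\le g_s\le\lambda_{k+1}^{(p)}$ with the inequalities strict on a set of positive measure, so the nonexistence argument above applies and $\mathrm{Inv}=\{0\}$ throughout. In a second homotopy I let $q$ run from $p$ down to $2$ while keeping the weight equal to a constant $c(q)$ chosen continuously in the nonempty, continuously varying gap $(\lambda_k^{(q)},\lambda_{k+1}^{(q)})$; this is possible because $\lambda_n^{(q)}\to\lambda_n^{(p)}$ as $q\to p$ and the one-dimensional eigenvalues are simple and strictly ordered. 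Since the closed unit ball is a common isolating neighbourhood and bounded sets are strongly admissible for the family by Proposition~\ref{0212-29042015}(ii), (H4) yields $h({\mathbf \Phi}^{(p,g)},\{0\})=h({\mathbf \Phi}^{(2,c(2))},\{0\})$. At the endpoint the problem is the linear equation $u_t=u_{xx}+c(2)u$ on $C_0(0,l)$; its generator $\part_{xx}+c(2)$ has Dirichlet eigenvalues $c(2)-\lambda_n^{(2)}$, which are positive exactly for $n\le k$ and never zero. Splitting $C_0(0,l)$ into the $k$-dimensional span of the first $k$ eigenfunctions and its exponentially stable complement, Theorem~\ref{11052015-1744} gives $h({\mathbf \Phi}^{(2,c(2))},\{0\})=\Sigma^k$, completing the proof.

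The step I expect to be delicate is the nonexistence of nontrivial stationary solutions, since it is what simultaneously yields isolation and, uniformly along the homotopy, the hypotheses needed for (H4). Making the oscillation comparison rigorous for the $p$-Laplacian—existence and monotone dependence of the zeros on the weight, and the fact that a strict weight inequality on a positive-measure set produces a strict shift of the zeros—is the technical heart; the homogeneity of the equation is what lets me use a single bounded isolating neighbourhood for the entire family.
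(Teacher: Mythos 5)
Your proof is correct and follows essentially the same route as the paper: reduce everything, via the Lyapunov function $\varphi_{p,g}$, to the nonexistence of nontrivial equilibria of the stationary problem, and then compute the index by a two-stage continuation (first deforming $g$ to a constant in the spectral gap $(\lma_k^{(p)},\lma_{k+1}^{(p)})$, then deforming $p$ down to $2$) ending with Theorem \ref{11052015-1744} in the linear case. The only substantive differences are that the Sturm-comparison nonexistence result, which you rightly identify as the technical heart and sketch a proof of, is simply quoted in the paper as Lemma \ref{1719-20082015} from the literature, and that your version of the Lyapunov step (every equilibrium sits at level $\varphi_{p,g}=0$, so $\varphi_{p,g}\circ u$ is constant and \eqref{15052015-1726} forces $\dot u\equiv 0$) is an equivalent repackaging of the paper's argument that a nonconstant bounded solution would require two distinct equilibria.
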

\noindent In the proof we shall use the following lemma.
\begin{lemma}\label{1719-20082015}{\em (See e.g. \cite{Del-Man-El} and \cite{Manasevich})}
If $g$ is as in Lemma \ref{15052015-0609}, then  the problem
\begin{equation}
\left\{ \begin{array}{l}
-  \left(\left| u_x \right|^{p-2}u_x \right)_x = g(x)|u|^{p-2}u, \ x\in (0,l),\\
u(0)=u(l)=0.
\end{array}
\right.
\end{equation}
has no nontrivial weak solutions.
\end{lemma}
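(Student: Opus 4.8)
The plan is to pass from the weak formulation to a classical ordinary differential equation and then to run a Pr\"ufer-type phase argument, which is the route taken in \cite{Del-Man-El} and \cite{Manasevich}. First I would settle regularity: since $g$ is continuous and $W^{1,p}_0(0,l)$ embeds into $C[0,l]$, any weak solution $u$ makes $g\,|u|^{p-2}u\in C[0,l]$, hence $|u'|^{p-2}u'\in C^1(0,l)$ and $u\in C^1(0,l)$, so $u$ solves $-(|u'|^{p-2}u')'=g(x)|u|^{p-2}u$ classically with $u(0)=u(l)=0$. A nontrivial such $u$ cannot have a double zero (otherwise uniqueness for the initial value problem forces $u\equiv 0$), so its zeros in $[0,l]$ are isolated.

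Next I would introduce the generalized trigonometric functions $\sin_p,\cos_p$: with $\cos_p=\sin_p'$ and the first integral $|\sin_p t|^p+|\cos_p t|^p=1$, these are periodic with half-period $\pi_p$, and $\sin_p$ vanishes exactly on $\pi_p\mathbb{Z}$. Writing a solution in generalized polar form $u=\rho\,\sin_p\theta$, $u'=\rho\,\cos_p\theta$, the homogeneity of the $p$-Laplacian makes the phase decouple from the radial variable $\rho$, and a direct computation gives
\[
\theta'=|\cos_p\theta|^{p}+\frac{g(x)}{p-1}\,|\sin_p\theta|^{p}.
\]
Two features drive the proof: the right-hand side is nondecreasing in the weight, with $\partial\theta'/\partial g=\tfrac{1}{p-1}|\sin_p\theta|^{p}\geq 0$; and $u(x)=0$ is equivalent to $\theta(x)\in\pi_p\mathbb{Z}$, where $\theta'=|\cos_p\theta|^{p}=1>0$, so the phase crosses each multiple of $\pi_p$ transversally. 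Normalizing $\theta(0)=0$, the condition $u(l)=0$ forces $\theta(l)=N\pi_p$ for some integer $N\geq 1$ (and then $u$ has $N-1$ interior zeros).

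The core is a Sturm comparison for $\theta$. Let $\theta_g$, $\theta_{\lma_k^{(p)}}$, $\theta_{\lma_{k+1}^{(p)}}$ be the phases for the weight $g$ and for the constant weights $\lma_k^{(p)},\lma_{k+1}^{(p)}$, all with $\theta(0)=0$. For a constant weight $\lma_n^{(p)}$ the corresponding eigenfunction is a solution whose phase advances over $[0,l]$ by exactly $n\pi_p$ (this is precisely how the values $\lma_n^{(p)}$ arise). Since $\lma_k^{(p)}\leq g(x)\leq\lma_{k+1}^{(p)}$, monotonicity of $\theta'$ in the weight, via a Gronwall comparison, already yields the non-strict bounds $k\pi_p\leq\theta_g(l)\leq (k+1)\pi_p$.

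The step I expect to be the main obstacle is upgrading these to \emph{strict} inequalities, because at the isolated zeros of $u$ the phase equation does not see $g$ at all, so the naive comparison is only non-strict. I would argue as follows for the lower bound (the upper one being symmetric): set $w:=\theta_g-\theta_{\lma_k^{(p)}}\geq 0$; expanding the difference of the two phase equations and applying the mean value theorem gives $w'=c(x)\,w+h(x)$ with
\[
h(x)=\frac{1}{p-1}\bigl(g(x)-\lma_k^{(p)}\bigr)\,|\sin_p\theta_{\lma_k^{(p)}}(x)|^{p}\geq 0.
\]
Hence $w(l)=\int_0^l e^{\int_s^l c}\,h(s)\,\d s$, so $\theta_g(l)=k\pi_p$ would force $h\equiv 0$ a.e.; but $\sin_p\theta_{\lma_k^{(p)}}$ vanishes only at the finitely many zeros of the $\lma_k^{(p)}$-eigenfunction, hence on a set of measure zero, so this would give $g=\lma_k^{(p)}$ a.e., contradicting the strict inequality on a set of positive measure. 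Thus $k\pi_p<\theta_g(l)<(k+1)\pi_p$, so $\theta_g(l)$ lies strictly between two consecutive multiples of $\pi_p$ and cannot equal $N\pi_p$. This contradicts $u(l)=0$, and therefore no nontrivial weak solution exists.
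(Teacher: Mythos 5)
The paper offers no proof of this lemma --- it simply cites del Pino--Elgueta--Man\'asevich --- and your Pr\"ufer-phase/Sturm-comparison argument is precisely the argument of those references, so this is essentially the paper's (cited) approach. The proof is correct, including the delicate upgrade to strict inequalities via the variation-of-constants representation of $w(l)$, which is exactly where the positive-measure hypothesis must enter.
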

\begin{proof}[Proof of Lemma \ref{15052015-0609}]
Given a full bounded solution $u\in C(\R,C_0(0,l))$ of (\ref{11052015-1622}). By Theorem \ref{15052015-1715},
$\alpha(u)\cup \omega(u)\subset {\cal E}$. If  $u$ was nontrivial, then we would get
two different equilibria, since due to Theorem \ref{06052015-1149}  the Lyapunov function would change its value along the nontrivial solution. But according to Lemma \ref{1719-20082015}, in this case we have ${\cal E}=\{0\}$, which is a contradiction showing that there are no nontrivial bounded solutions of (\ref{11052015-1622}), which shows that
$K=\{0\}$ is an isolated invariant set.\\
 \indent Observe that we get
 \begin{equation}\label{2221-20082015}
 h({\mathbf \Phi}^{(p,g)}, K) = h({\mathbf \Phi}^{(p,\lma)}, K) \ \ \mbox{  for any } \ \ \lma \in (\lma_k^{(p)}, \lma_{k+1}^{(p)}).
 \end{equation}
Indeed, it is enough to consider a family of semiflows ${\mathbf \Phi}^{(p,\tilde g(\cdot, \mu))}$, $\mu\in [0,1]$, with
$\tilde g\colon  [0,l]\times [0,1] \to [\lma_{k}^{(p)}, \lma_{k+1}^{(p)}]$ given by
$\tilde g(x,\mu)=\mu g(x)+(1-\mu)\lma$, $x\in [0,l]$, $\mu\in [0,1]$, and use Proposition \ref{0212-29042015} and the first part of the proof together with the continuation property $(H4)$ of Conley index.\\
\indent  Next we consider the family $\{ \Phi^{(\tilde p(\mu), \tilde \lma (\mu))}\}_{\mu\in [0,1]}$ of local semiflows on $C_0(0,l)$ with  continuous functions $\tilde p\colon  [0,1]\to [2,p]$ and $\tilde \lma \colon [0,1] \to \R$ with  $\tilde p (0) = p$, $\tilde p (1)=2$  and $\lma_k^{\tilde p(\mu)}<\tilde \lma (\mu) <\lma_{k+1}^{\tilde p(\mu)}$ for all $\mu\in [0,1]$. Again, in view of Proposition \ref{0212-29042015} we may apply the continuation property  of Conley index and obtain
 \begin{equation}\label{2235-20082015}
 h({\mathbf \Phi}^{(p, \tilde \lma (0))}, K)=h({\mathbf \Phi}^{(2,\tilde \lma (1))}, K).
 \end{equation}
Using the spectral decomposition given by the Laplace operator ${\mathbf A}_2$ (and $\bar{\mathbf A}_2$) together with Theorem \ref{11052015-1744},  we get
\begin{equation}\label{2218-20082015}
h({\mathbf \Phi}^{(2, \tilde \lma(1))}, K) = \Sigma^k.
\end{equation}
By combining (\ref{2221-20082015}), (\ref{2235-20082015}) and (\ref{2218-20082015}) we get the assertion.
\end{proof}

\begin{lemma}\label{L:conv1} Under the assumptions \eqref{1219-21082015} and \eqref{1220-21082015}, if $\R\ni r_n\to 0^+$, $\R\ni R_n\to\infty$ and $\bar v_n\to \bar v_0$ in $C_0(0,l)$, then
\[r_n^{1-p}{\mathbf F}(r_n\bar v_n)\to f'_0|\bar v_0|^{p-2}\bar v_0\text{ and }R_n^{1-p}{\mathbf F} (R_n\bar v_n)\to f'_\infty|\bar v_0|^{p-2}\bar v_0\text{ in }C_0(0,l).\]
\end{lemma}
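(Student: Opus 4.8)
The plan is to exploit the multiplicative form of the hypotheses. For $u\neq 0$ set $g(x,u):=f(x,u)/(|u|^{p-2}u)$, so that for every $x\in[0,l]$ and every $n$
\[
r_n^{1-p}{\mathbf F}(r_n\bar v_n)(x)=g(x,r_n\bar v_n(x))\,|\bar v_n(x)|^{p-2}\bar v_n(x),
\]
and the analogous identity holds with $R_n$ in place of $r_n$. Wherever $\bar v_n(x)=0$ the left side is $0$ because $f(x,0)=0$, so I may read the coefficient there as $f'_0(x)$ (resp.\ $f'_\infty(x)$) without affecting anything. Since $s\mapsto|s|^{p-2}s$ is continuous and the convergent sequence $(\bar v_n)$ is uniformly bounded, say by $B$, the factor $|\bar v_n|^{p-2}\bar v_n$ converges to $|\bar v_0|^{p-2}\bar v_0$ in $C_0(0,l)$; I will use this in both parts.

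For the first (zero) limit I would note that $|r_n\bar v_n(x)|\le r_nB\to 0$ uniformly in $x$, so that by the uniform convergence \eqref{1219-21082015} the coefficient $g(\cdot,r_n\bar v_n(\cdot))$ tends to $f'_0$ uniformly on $[0,l]$. The conclusion then follows from the elementary splitting $\psi_n\phi_n-f'_0\phi_0=(\psi_n-f'_0)\phi_n+f'_0(\phi_n-\phi_0)$, where $\psi_n:=g(\cdot,r_n\bar v_n(\cdot))$ and $\phi_n:=|\bar v_n|^{p-2}\bar v_n$: the first summand is controlled by the uniform convergence of the coefficient against the uniform bound on $\phi_n$, the second by the uniform convergence of $\phi_n$ against the bound $\|f'_0\|_\infty$.

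For the second (infinity) limit the same coefficient argument fails precisely near the zeros of $\bar v_0$, since there $R_n\bar v_n(x)$ need not be large and $g(x,R_n\bar v_n(x))$ need not approach $f'_\infty(x)$. I would therefore split $[0,l]$ at a threshold $\eta>0$. Fix $\varepsilon>0$ and take $M>0$ from \eqref{1220-21082015} with $|g(x,u)-f'_\infty(x)|<\varepsilon$ for $|u|>M$, which also yields the $p$-growth bound $|f(x,u)|\le(\|f'_\infty\|_\infty+1)|u|^{p-1}$ for $|u|>M$; together with $C_M:=\max_{[0,l]\times[-M,M]}|f|$ this controls $f$ for all arguments. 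On $\{x:|\bar v_n(x)|\ge\eta\}$ one has $|R_n\bar v_n(x)|\ge R_n\eta>M$ for $n$ large, so the coefficient is within $\varepsilon$ of $f'_\infty(x)$ and the factor convergence closes the estimate. On $\{x:|\bar v_n(x)|<\eta\}$ both $R_n^{1-p}{\mathbf F}(R_n\bar v_n)(x)$ and $f'_\infty(x)|\bar v_0(x)|^{p-2}\bar v_0(x)$ are small: the former is bounded by $R_n^{1-p}C_M+(\|f'_\infty\|_\infty+1)\eta^{p-1}$ (splitting according to whether $|R_n\bar v_n(x)|\le M$ or not), the latter by $\|f'_\infty\|_\infty(2\eta)^{p-1}$ once $\|\bar v_n-\bar v_0\|_\infty<\eta$. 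Choosing $\eta$ small and then $n$ large makes the uniform difference less than any prescribed tolerance.

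The main obstacle is exactly this infinity case over shrinking neighbourhoods of the zero set of $\bar v_0$, where the quotient defining $g$ is not asymptotically pinned to $f'_\infty$; the resolution is the $p$-growth bound extracted from \eqref{1220-21082015}, which renders the contribution of that region negligible through the vanishing $|\bar v_n|^{p-1}$ factor rather than through convergence of the coefficient.
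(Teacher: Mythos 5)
Your proof is correct and follows essentially the same route as the paper: both arguments treat the region where the argument of $f$ is large via the uniform limit \eqref{1220-21082015} and the complementary region via a bound on $f$ over a compact set together with the smallness of $|\bar v_n|^{p-1}$ there. The only (cosmetic) difference is that you case-split on $|\bar v_n(x)|\geq\eta$ versus $<\eta$ with an auxiliary threshold $\eta$, while the paper splits directly on $|R_n\bar v_n(x)|\geq D$ versus $<D$ and uses $|\bar v_n(x)|<D/R_n\to 0$ on the second set.
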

\begin{proof}The first convergence follows directly from \eqref{1219-21082015}. We shall prove the second convergence.
Fix $\varepsilon>0$. Put $V:=\sup\{\|\bar v_n\|^{p-1}\mid n\in\N\}$ and let $D>0$ be such that
\begin{equation}\label{c1}
\left|\frac{f(x,u)}{|u|^{p-2}u}-f_\infty'(x)\right|<\frac\varepsilon {2V}\text{ for all } x\in [0,l] \mbox{ and } |u|\geq D.
\end{equation}
For $n$ sufficiently large
\begin{equation}\label{c2}
\left| f'_\infty(x)|\bar v_n(x)|^{p-2}\bar v_n(x) - f'_\infty(x)|\bar v_0(x)|^{p-2}\bar v_0(x) \right|<\frac\varepsilon 2\text{ for all }x\in [0,l]
\end{equation}
and
\begin{equation}\label{c3}\frac{\sup|f([0,l]\times[-D,D])|+\|f'_\infty\|D^{p-1}}{R_n^{p-1}}<\frac\varepsilon 2.
\end{equation}
Then
\begin{equation}\label{c4}
\left|\frac{f(x,R_n\bar v_n(x))}{R_n^{p-1}}-f'_\infty(x)|\bar v_0(x)|^{p-2}\bar v_0(x) \right|<\varepsilon\text{ for }x\in[0,l].
\end{equation}
Indeed, set $x\in [0,l]$. If $|R_n\bar v_n(x)|\geq D$, then \eqref{c4} follows from \eqref{c1} and \eqref{c2}.
If $|R_n\bar v_n(x)|< D$, then \eqref{c4} is implied by \eqref{c2} and \eqref{c3}.
\end{proof}
\begin{proof}[Proof of Theorem \ref{11052015-1559}] (i) Consider $\tilde f(x,u,\mu):= \mu f(x,u) + (1-\mu) f'_0 (x) |u|^{p-2}u$, for $x\in [0,l]$, $u\in\R$, $\mu\in [0,1]$, and the family of local semiflows $\{\tilde{\mathbf \Phi}^{(\mu)} \}_{\mu\in [0,1]}$ on ${\mathbf X}=C_0(0,l)$ generated by the equations
\begin{equation}\label{15052015-0618}
\left\{ \begin{array}{l}
u_t =  \left(\left| u_x \right|^{p-2}u_x \right)_x + \tilde f(x,u,\mu), \ x\in (0,l), \ t\in\R,\\
u(t,0)=u(t,l)=0,\ t\in \R.
\end{array}
\right.
\end{equation}
 We claim that there exists $r_0>0$ such that for all $r\in (0,r_0]$, ${\mathbf N}_r:=B(0,r)$ is an isolating neighborhood of $K_0$ relative to $\tilde {\mathbf \Phi}^{(\mu)}$ for all $\mu\in [0,1]$. Suppose to the contrary, that there exist $(r_n)$ in $(0,+\infty)$, $(\mu_n)$ in $[0,1]$ with $r_n\to 0^+$ together with full solutions $u_n\in C(\R,{\mathbf X})$ of $\tilde {\mathbf \Phi}^{(
 \mu_n)}$ such that $\|u_n(0)\|_\infty = \sup_{t\in\R} \|u_n(t)\|_\infty=r_n$ for all $n\geq 1$. Define $v_n\in C(\R,{\mathbf X})$ by $v_n(t):=r_n^{-1}  u_n (t/r_n^{p-2})$, $t\in\R$, $n\geq 1$. Then $\| v_n(0) \|_\infty=1$ and $v_n$ is a solution of
$$
\left\{ \begin{array}{l}
u_t =  \left(\left| u_x \right|^{p-2}u_x \right)_x + f_n(x,u), \ x\in (0,l), \ t\in\R,\\
u(t,0)=u(t,l)=0, t\in \R,
\end{array}
\right.
$$
where $f_n\colon [0,l]\times \R\to\R$, $f_n (x,u):= r_n^{-(p-1)} \tilde f (x, r_n u,\mu_n)$, $x\in [0,l]$, $u\in \R$, $n\geq 1$.
By \eqref{1219-21082015} and \eqref{1220-21082015}, $|f(x,u)|\leq M|u|^{p-1}$ for some $M>0$. Therefore, $r^{1-p}|f(x,ru)|\leq M|u|^{p-1}$ and consequently
\begin{equation}\label{m1}\|{\mathbf F}_n(v_n(t))\|_\infty \leq M \text{ for }t\in\R,
\end{equation}
where $[{\mathbf F}_n(\bar u)](x)=f_n(x,\bar u(x))$  for $\bar u\in C_0(0,l),\ x\in[0,l]$.
Thus, we can use Proposition \ref{22042015-1020} (ii), which together Theorem \ref{1502-29072015} and a diagonal argument gives a subsequence  (still denoted by $(v_n)$)  converging pointwise  to some $v_0\in C(\R, {\mathbf X})$ . From Lemma \ref{L:conv1} and \eqref{m1} it follows that ${\mathbf F}_n\circ v_n\to f'_0|v_0|^{p-2}v_0$ in $L^1([0,T],C_0(0,l))$. Therefore, Proposition \ref{22042015-1020} (i) gives that $v_0$ is a nonzero full bounded integral solution of
\[
\left\{ \begin{array}{l}
v_t =  \left(\left| v_x \right|^{p-2}v_x \right)_x + f'_0(x)|v|^{p-2}v, \ x\in (0,l), \ t\in\R,\\
v(t,0)=v(t,l)=0, t\in R.
\end{array}
\right.
\]
This, in view of Lemma \ref{15052015-0609}, gives a contradiction, which proves the existence of a proper isolating neighborhood ${\mathbf N}_r$ of $K_0$ relative to the local semiflows $\tilde {\mathbf \Phi}^{(\mu)}$, $\mu\in [0,1]$.
Now Proposition \ref{0212-29042015}, the homotopy invariance of Conley index (H4) and Lemma \ref{15052015-0609} imply the required equality
$$
h({\mathbf \Phi}^{(p,f)}, K_0)=h(\tilde {\mathbf \Phi}^{(1)}, K_0) = h(\tilde {\mathbf \Phi}^{(0)}, K_0) = \Sigma^{k_{0}}.
$$
\noindent (ii) The proof is analogous to that for (i). Here we change properly the definition of $\tilde f$: $\tilde f(x,u,\mu):= \mu f(x,u) + (1-\mu) f'_\infty (x) |u|^{p-2}u$, for $x\in [0,l]$, $u\in\R$, $\mu\in [0,1]$, and consider the family of local semiflows $\{\tilde{\mathbf \Phi}^{(\mu)} \}_{\mu\in [0,1]}$ on ${\mathbf X}=C_0(0,l)$ generated by the equations (\ref{15052015-0618}). We can show that there exists $R_0>0$ such that for all $R>R_0$ ${\mathbf N}_R:=B(0,R)$ is an isolating neighborhood of $K_\infty$ relative to $\tilde {\mathbf \Phi}^{(\mu)}$ for all $\mu\in [0,1]$. Indeed, on the contrary, suppose that there exist $R_n\to +\infty$  and $\mu_n \in [0,1]$, $n\geq 1$ together with full solutions $u_n\in C(\R,{\mathbf X})$ of $\tilde {\mathbf \Phi}^{(\mu_n)}$ such that $\|u_n(0)\|_\infty = \sup_{t\in\R} \|u_n(t)\|_\infty=R_n$. Define $v_n\in C(\R,{\mathbf X})$ by $v_n(t):=R_n^{-1}  u_n (t/R_n^{p-2})$, $t\in\R$, $n\geq 1$. Then $\| v_n(0) \|_\infty=1$ and $v_n$ is an integral solution of
$$
\left\{ \begin{array}{l}
u_t =  \left(\left| u_x \right|^{p-2}u_x \right)_x + f_n(x,u), \ x\in (0,l), \ t\in\R,\\
u(t,0)=u(t,l)=0, t\in \R,
\end{array}
\right.
$$
where $f_n\colon [0,l]\times \R\to\R$, $f_n (x,u):= R_{n}^{-(p-1)} \tilde f  (x, R_n u,\mu_n)$, $x\in [0,l]$, $u\in \R$, $n\geq 1$.
In a similar manner as in (i), there exists a subsequence of $(v_n)$ converging uniformly on bounded intervals to some bounded  $v_0\in C(\R,{\mathbf X})$ with $\|v_0(0)\|=1$ that is an integral solution of
$$
\left\{ \begin{array}{l}
v_t =  \left(\left| v_x \right|^{p-2}v_x \right)_x + f'_\infty (x)|v|^{p-2}v, \ x\in (0,l), \ t\in\R,\\
v(t,0)=v(t,l)=0, t\in \R,
\end{array}
\right.
$$
which is impossible. Finally, using the homotopy invariance of Conley index and Lemma \ref{15052015-0609} one has
$$
h({\mathbf \Phi}^{(p,f)}, K_\infty)=h(\tilde {\mathbf \Phi}^{(1)}, K_\infty) = h(\tilde {\mathbf \Phi}^{(0)}, \{ 0 \}) = \Sigma^{k_{\infty}},
$$
which completes the proof.
\end{proof}

\begin{proof}[Proof of Theorem \ref{0914-29032015}] By Theorem \ref{11052015-1559}, $(K_0, {\mathbf \Phi}^{(p,f)}), (K_\infty, {\mathbf \Phi}^{(p,f)})\in {\cal I}({\mathbf X})$ and
$$
h({\mathbf \Phi}^{(p,f)}, K_0)=\Sigma^{k_0}  \ \ \  \mbox{ and }  \ \ \ h({\mathbf \Phi}^{(p,f)}, K_\infty)=\Sigma^{k_\infty}.
$$
This means that $K_0$ and $K_\infty$ are irreducible and that $h({\mathbf \Phi}^{(p,f)}, K_0) \neq h({\mathbf \Phi}^{(p,f)}, K_\infty)$. Hence, in view of Theorem \ref{15052015-0642}, we get a full solution $u\colon \R\to {\mathbf X}$
of ${\mathbf \Phi}^{(p,f)}$ with $u(\R)\not\subset K_0$ and such that either $\alpha(u) \subset K_0$ or $\omega(u)\subset K_0$.
In view of Theorem \ref{15052015-1715}, since $u$ is a nontrivial solution of ${\mathbf \Phi}^{(p,f)}$, there is a nontrivial stationary solution $\bar u$ of  \eqref{07042015-1828} such that either $\lim_{t\to -\infty} u(t)=0$ and $\bar u \in \omega(u)$ or
$\bar u \in \alpha (u)$ and $\lim_{t\to + \infty} u(t)=0$. Since each solution of ${\mathbf \Phi}^{(p,f)}$ is a solution of (\ref{07042015-1828}), the proof is completed.
\end{proof}

\section*{Acknowledgements}
The study of both the authors was supported by the NCN Grant 2013/09/B/ST1/01963.

\end{document}